\newtheorem{definition}{Definition}[section]
\newtheorem{lemma}[definition]{Lemma}
\newtheorem{theorem}{Theorem}[section]
\newtheorem{proposition}[definition]{Proposition}
\theoremstyle{definition}
\newtheorem{example}{Example}[section]
\newtheorem{remark}[definition]{Remark}
\numberwithin{equation}{section}
\newcommand{\Ai}{\ensuremath{\mathcal A}\xspace}
\newcommand{\Bi}{\ensuremath{\mathcal B}\xspace}
\newcommand{\Ci}{\ensuremath{\mathcal C}\xspace}
\newcommand{\Ei}{\ensuremath{\mathcal E}\xspace}
\newcommand{\Fi}{\ensuremath{\mathcal F}\xspace}
\newcommand{\Gi}{\ensuremath{\mathcal G}\xspace}
\newcommand{\Hi}{\ensuremath{\mathcal H}\xspace}
\newcommand{\Pii}{\ensuremath{\mathcal P}\xspace}
\newcommand{\Qi}{\ensuremath{\mathcal Q}\xspace}
\newcommand{\Ti}{\ensuremath{\mathcal T}\xspace}
\newcommand{\SaS}{\ensuremath{\mathcal{S}\alpha\mathcal{S}}\xspace}
\newcommand{\Aiu}{\ensuremath{\mathcal A(u)}\xspace}
\newcommand{\Cio}{\ensuremath{\mathcal C_0}\xspace}
\newcommand{\zset}{\ensuremath{{\emptyset'}}\xspace}
\newcommand{\vset}{\ensuremath{{\emptyset}}\xspace}
\newcommand{\Cp}[1]{\ensuremath{{ \bm{\mathcal{A}_{#1}}} }\xspace}
\newcommand{\Gc}[1]{\ensuremath{{ \mathcal{G}_{\hspace{-0.4pt}#1}^{\hspace{0.5pt}*} }}\xspace}
\newcommand{\GcA}[1]{\ensuremath{{ \widetilde{\mathcal{G}}_{\hspace{-0.4pt}#1}^{\hspace{0.5pt}*} }}\xspace}
\newcommand{\pc}[1]{\ensuremath{{ p_{\hspace{-0.2pt}#1} }}\xspace}
\newcommand{\Pc}[1]{\ensuremath{{ P_{\hspace{-0.2pt}#1} }}\xspace}
\newcommand{\Pce}[2]{\ensuremath{{ P^{\hspace{0.2pt}#2}_{\hspace{-0.2pt}#1} }}\xspace}
\newcommand{\Cpi}[2]{\ensuremath{{ U_{\hspace{-1pt}#1}^{\hspace{0.2pt}#2}} }\xspace}
\newcommand{\eCpi}[1]{ \varepsilon_{#1} }
\newcommand{\vXCp}[1]{\ensuremath{ \bm{\mathbf{X}_{#1}} }\xspace}
\newcommand{\vxCp}[1]{\ensuremath{ \bm{\mathbf{x}_{#1}} }\xspace}
\newcommand{\pth}[1]{(#1)}
\newcommand{\pthb}[1]{\bigl(#1\bigr)}
\newcommand{\pthB}[1]{\Bigl(#1\Bigr)}
\newcommand{\pthbb}[1]{\biggl(#1\biggr)}
\newcommand{\bkt}[1]{[#1]}
\newcommand{\bktb}[1]{\bigl[#1\bigr]}
\newcommand{\bktB}[1]{\Bigl[#1\Bigr]}
\newcommand{\bktbb}[1]{\biggl[#1\biggr]}
\newcommand{\bktBB}[1]{\Biggl[#1\Biggr]}
\newcommand{\brc}[1]{\{#1\}}
\newcommand{\brcb}[1]{\bigl\{#1\bigr\}}
\newcommand{\brcbb}[1]{\biggl\{#1\biggr\}}
\newcommand{\dt}{\ensuremath{\mathrm d}\xspace} 
\newcommand{\disy}{\Delta} 
\newcommand{\eqdef}{:=}
\newcommand{\eqd}{\overset{\mathrm{d}}{=}}
\newcommand{\scpr}[2]{\left\langle #1,#2 \right\rangle}
\newcommand{\ivoo}[1]{\ensuremath{\left(#1\right)}}
\newcommand{\ivof}[1]{\ensuremath{\left(#1\right]}}
\newcommand{\ivfo}[1]{\ensuremath{\left[#1\right)}}
\newcommand{\ivff}[1]{\ensuremath{\left[#1\right]}}
\newcommand{\abs}[1]{\lvert#1\rvert}
\newcommand{\absb}[1]{\bigl\lvert#1\bigr\rvert}
\newcommand{\absbb}[1]{\biggl\lvert#1\biggr\rvert}
\newcommand{\norm}[1]{\lVert#1\rVert}
\newcommand{\normb}[1]{\bigl\lVert#1\bigr\rVert}
\renewcommand{\Pr}{\ensuremath{\mathbb P}\xspace}
\newcommand{\pr}[2][]{\mathbb{P}#1\pth{#2}}
\newcommand{\prb}[2][]{\mathbb{P}#1\pthb{\hspace{1pt}#2\hspace{1pt}}}
\newcommand{\prc}[3][]{\mathbb{P}#1\pth{#2\hspace{1.5pt}|\hspace{1.5pt}#3}}
\newcommand{\prcb}[3][]{\mathbb{P}#1\pthb{\hspace{1pt}#2\bigm|#3\hspace{1pt}}}
\newcommand{\esp}[2][]{\mathbb{E}#1\bkt{#2}}
\newcommand{\espb}[2][]{\mathbb{E}#1\bktb{\hspace{1pt}#2\hspace{1pt}}}
\newcommand{\espB}[2][]{\mathbb{E}#1\bktB{#2}}
\newcommand{\espbb}[2][]{\mathbb{E}#1\bktbb{#2}}
\newcommand{\espBB}[2][]{\mathbb{E}#1\bktBB{#2}}
\newcommand{\espc}[3][]{\mathbb{E}#1\bkt{\hspace{1pt}#2\hspace{1.5pt}|\hspace{1.5pt}#3\hspace{1pt}}}
\newcommand{\espcb}[3][]{\mathbb{E}#1\bktb{\hspace{1pt}#2\bigm|#3\hspace{1pt}}}
\newcommand{\espcB}[3][]{\mathbb{E}#1\bktB{#2\Bigm|#3}}
\newcommand{\espcbb}[3][]{\mathbb{E}#1\bktbb{#2\biggm|#3}}
\newcommand{\R}{\ensuremath{\mathbf{R}}\xspace}
\newcommand{\Q}{\ensuremath{\mathbf{Q}}\xspace}
\newcommand{\N}{\ensuremath{\mathbf{N}}\xspace}
\newcommand{\Rpe}{\ensuremath{\R_+^{\,*}}\xspace}
\newcommand{\indi}{\ensuremath{\mathbf{1}}\xspace}
\newcommand{\eps}{\varepsilon}
\newcommand{\card}[1]{\abs{#1}}
\DeclareMathOperator{\cov}{Cov}
\newcommand{\vsp}{\vspace{.15cm}}
\newcommand{\vspar}{\vspace{.25cm}}
\begin{document}

\begin{frontmatter}

\title{An increment type set-indexed Markov property}
\runtitle{An increment type set-indexed Markov property}

\author{\fnms{Paul} \snm{Balan\c{c}a}%
\thanksref{t1}%
\ead[label=e1]{paul.balanca@ecp.fr}%
\ead[label=u1,url]{www.mas.ecp.fr/recherche/equipes/modelisation\_probabiliste}%
}

\address{\printead{e1}\\ \printead{u1}\\[1em] 
\'Ecole Centrale Paris\\
MAS Laboratory\\ 
Grande Voie des Vignes - 92295 Ch\^atenay-Malabry, France
}

\thankstext{t1}{This article is part of the Ph.D. thesis prepared by the author under the supervision of Erick Herbin.}

\affiliation{\'Ecole Centrale Paris}
\runauthor{P. Balan\c{c}a}

\begin{abstract}
We present and study a Markov property, named \emph{\Ci-Markov}, adapted to processes indexed by a general collection of sets. This new definition fulfils one important expectation for a set-indexed Markov property: there exists a natural generalization of the concept of transition operator which leads to characterization and construction theorems of \emph{\Ci-Markov} processes. Several usual Markovian notions, including \emph{Feller} and \emph{strong Markov} properties, are also developed in this framework. Actually, the \emph{\Ci-Markov} property turns out to be a natural extension of the two-parameter \emph{$\ast$-Markov} property to the multiparameter and the set-indexed settings. Moreover, extending a classic result of the real-parameter Markov theory, sample paths of multiparameter \emph{\Ci-Feller} processes are proved to be almost surely right-continuous. Concepts and results presented in this study are illustrated with various examples.
\end{abstract}

\begin{keyword}[class=AMS]
  \kwd{60G10}
  \kwd{60G15}
  \kwd{60G60}
  \kwd{60J25}
\end{keyword}

\begin{keyword}
  \kwd{Markov property}
  \kwd{multiparameter processes}
  \kwd{transition system}
  \kwd{set-indexed processes}
\end{keyword}

\end{frontmatter}

\section{Introduction} \label{sec:introduction}

The Markov property is a central concept of the classic theory of real-parameter stochastic processes. Its extension to processes indexed by a partially ordered collection is a non-trivial problem and multiple attempts exist in the literature to obtain a satisfactory definition. This question has been first investigated by \citet{Levy(1945)} who introduced the \emph{sharp Markov} property for two-parameter processes: a process $(X_t)_{t\in\ivff{0,1}^2}$ is said to be \emph{sharp Markov} with respect to a set $A\subset\ivff{0,1}^2$ if the $\sigma$-fields $\Fi_A$ and $\Fi_{A^c}$ are conditionally independent given $\Fi_{\partial A}$, where for any $V\subset\ivff{0,1}^2$, $\Fi_V\eqdef\sigma(\brc{X_t; t\in V})$. \citet{Russo(1984)} proved that processes with independent increments are \emph{sharp Markov} with respect to any finite unions of rectangles. Later, \citet{Dalang.Walsh(1992),Dalang.Walsh(1992)a} characterized entirely the collection of sets with respect to which processes with independent increments are \emph{sharp Markov}.

Since the Brownian sheet was known not to satisfy this property with respect to simple sets (e.g. triangle with vertices $(0,0)$, $(0,1)$ and $(1,0)$), \citet{McKean(1963)} has introduced a weaker Markov property called \emph{germ-Markov}. Similarly to the sharp Markov property, a process is said to be \emph{germ-Markov} with respect to a set $A\subset\ivff{0,1}^2$ if $\Fi_A$ and $\Fi_{A^c}$ are conditionally independent given $\Gi_{\partial A}\eqdef\cap_{V} \Fi_V$, where the intersection is taken over all open sets $V$ containing $\partial A$. 
As shown by \citet{Russo(1984)}, the Brownian sheet turns out to be \emph{germ-Markov} with respect to any open set.

A third Markov property, named \emph{$\ast$-Markov}, arises in the two-parameter literature. It has been first presented by \citet{Cairoli(1971)}, and then widely studied by \citet{Nualart.Sanz(1979),Korezlioglu.Lefort.ea(1981),Mazziotto(1988)}. In particular, the latter established that \emph{$\ast$-Markov} processes satisfy both \emph{sharp} and \emph{germ-Markov} properties with respect to respectively finite unions of rectangles and convex domains. Moreover, as stated in \cite{Nualart(1983),Luo(1988), Zhou.Zhou(1993)}, the concept of \emph{$\ast$-transition function} can be naturally introduced and leads to a complete characterization of the finite-dimensional distributions of $\R^2$-indexed processes. For the sake of readability, the precise definition of $\ast$-Markov processes is given in the core of the article.

This overview of the two-parameter Markov literature is clearly not exhaustive and a more complete picture is given later, including in particular the modern multiparameter approach presented by \citet{Khoshnevisan(2002)}.

Recently, \citet{Ivanoff.Merzbach(2000)} have introduced a set-indexed formalism which allows to define and study a wider class of processes indexed by a collection of sets. Based on this framework, several extensions of classic real-parameter processes have been investigated, including L\'evy processes \cite{Herbin.Merzbach(2013)}, martingales \cite{Ivanoff.Merzbach(2000)} and fractional Brownian motion \cite{Herbin.Merzbach(2006),Herbin.Merzbach(2009)}. The study of the Markovian aspects of set-indexed processes led to the definition of \emph{sharp Markov} and \emph{Markov} properties in \cite{Ivanoff.Merzbach(2000)a}, and of the \emph{set-Markov} property in \cite{Balan.Ivanoff(2002)}. The first paper mainly focused on the adaptation of Paul L\'evy's ideas to the set-indexed formalism. The latter approach, also called \emph{\Qi-Markov}, introduced a stronger property, leading to the definition of a transition system which characterizes the law of a \emph{set-Markov} process. More precisely, Theorem 1 in \cite{Balan.Ivanoff(2002)} states that, given an initial distribution, if a transition system satisfies a Chapman--Kolmogorov Equation \eqref{eq:chapman_kolmogorov_all} and a supplementary invariance Assumption \eqref{eq:set_markov_assumption}, a corresponding \emph{set-Markov} process can be constructed.\vsp

In the present work, we suggest a different approach, named \emph{\Ci-Markov}, for the introduction of a set-indexed Markov property. Our main goal is to obtain a natural definition of the transition probabilities which leads to satisfactory characterization and construction theorems. As later presented in Section \ref{sec:mpCMarkov}, this new set-indexed Markov property also appears to be a natural extension of some existing multiparameter Markov properties. 

To introduce the definition of \emph{\Ci-Markov} processes, let us first recall the notations used in the set-indexed formalism. Throughout, we consider set-indexed processes $X=\brc{X_A; A\in\Ai}$, where the indexing collection \Ai is made up of compact subsets of a locally compact metric space \Ti. Moreover, \Ai is assumed to satisfy the following conditions:
\begin{enumerate}[\it (i)]
  \item it is closed under arbitrary intersections and $A^\circ \neq A$ for all $A\in\Ai$, $A\neq\Ti$;
  \item $\zset\eqdef\bigcap_{A\in\Ai} A$ is a nonempty set (it plays a role equivalent to $0$ in $\R^N_+$);
  \item there is an increasing sequence $(B_n)_{n\in\N}$ of sets in \Ai such that $\Ti=\cup_{n\in\N} B_n$;
  \item \emph{Shape hypothesis}: for any $A, A_1,\dotsc,A_k\in\Ai$ with $A\subseteq\cup_{i=1}^k A_i$, there exists $i\in\brc{1,\dotsc,k}$ such that $A\subseteq A_i$.
\end{enumerate}
For the sake of readability, we restrict the assumptions on \Ai to those required for the development of the \emph{\Ci-Markov} approach. Supplementary properties such as \emph{separability from above} will be added later when necessary. The complete definition of an indexing collection can be found in the work of~\citet{Ivanoff.Merzbach(2000)}. 

Several examples of indexing collection have been presented in the literature (see e.g.~\cite{Ivanoff.Merzbach(2000),Balan.Ivanoff(2002),Herbin.Merzbach(2006)}). Among them, let us mention:
\begin{itemize}
  \item rectangles of $\R^N_+$: $\Ai=\brc{\ivff{0,t} : t\in\R_+^N}$. This collection is equivalent to the usual multiparameter setting on $\R^N_+$;
  \item subsets of the $N$-dimensional unit sphere $\mathcal{S}_N$: $\Ai=\brc{A_{\varphi} : \varphi\in\ivff{0,\pi}^{N-1}\times\ivfo{0,2\pi}}$, where $A_{\varphi}\eqdef\brc{u\in\mathcal{S}_N : \phi_i(u) \leq \varphi_i, \ 1\leq i\leq N }$, $\phi_i(u)$ denoting the $i$th angular coordinates of $u$, $1\leq i\leq N$;
  \item branches of a tree $T$ parametrized by $G_T\subset \cup_{n\in\N} (\N^*)^n$: $\Ai = G_T$, endowed with the usual intersection on tree structures. Note that \Ai is a discrete indexing collection.
\end{itemize}
More complex indexing collections can be obtained by considering the Cartesian product of simpler ones. This construction procedure is described more thoroughly in Section~\ref{ssec:si_examples}.\vsp

The notations \Aiu and \Ci respectively refer to the class of finite unions of sets from \Ai and to the collection of increments $C = A\setminus B$, where $A\in\Ai$ and $B\in\Aiu$. As mentioned in \cite{Ivanoff.Merzbach(2000)}, the assumption \emph{Shape} on \Ai implies the existence of a unique \emph{extremal representation} $\brc{A_i}_{1\leq i\leq k}$ of every $B\in\Aiu$, i.e. such that $B = \cup_{i=1}^k A_i$ and for every $i\neq j$, $A_i\nsubseteq A_j$. Hence, from now on, to any increment $C\in\Ci$ is associated the unique $A\in\Ai$ and $B = \cup_{i=1}^k A_i$ such that $C=A\setminus B$, $B\subseteq A$ and $\brc{A_i}_{i\leq k}$ is the extremal representation of $B$.

Then, for any $C = A\setminus B$, $B = \cup_{i=1}^k A_i$, \Cp{C} denotes the following subset of \Ai,
\begin{equation}\label{eq:def_Cfrontier}
  \Cp{C} = \brc{U\in\Ai_\ell;\, U\nsubseteq B^\circ} \eqdef \brc{\Cpi{C}{1},\dotsb,\Cpi{C}{p}}, \qquad\text{where }p=\card{\Cp{C}}
\end{equation}
and $\Ai_\ell$ denotes the semilattice $\brc{A_1\cap\dotsb\cap A_k,\dotsc,A_1\cap A_2,A_1\dotsc,A_k}\subset\Ai$.
For any set-indexed stochastic process $X$ and any $C\in\Ci$, the notation \vXCp{C} refers to the random vector $\vXCp{C} = \pthb{ X_{\Cpi{C}{1}},\dotsc,X_{\Cpi{C}{p} } }$. Similarly, \vxCp{C} is used to denote a vector of variables $\pth{ x_{\Cpi{C}{1}},\dotsc,x_{\Cpi{C}{p} } }$. 

Lastly, for any set-indexed filtration $\Fi=\brc{ \Fi_A; A\in\Ai}$, collections $(\Fi_B)_{B\in\Aiu}$ and $(\Gc{C})_{C\in\Ci}$ are respectively defined as
\begin{equation} \label{eq:def_filtrations}
  \Fi_B \eqdef \bigvee_{A\in\Ai,A\subseteq B}\,\Fi_A \qquad\text{and}\qquad \Gc{C} \eqdef \bigvee_{B\in\Aiu,B\cap C=\vset}\,\Fi_B.
\end{equation}
The family $\pth{ \Gc{C} }_{C\in\Ci}$ is usually called the \emph{strong history}. Note that these filtrations do not necessarily satisfy any outer-continuity property, on the contrary to the augmented filtrations defined later.

We can now present the set-indexed increment Markov property, abbreviated \emph{\Ci-Markov} property (\Ci denoting the class of increment sets) which is studied in the present work.
\begin{definition}[\textbf{\Ci-Markov property}] \label{def:c_markov}
  Let $(\Omega,\Fi,(\Fi_A)_{A\in\Ai},\Pr)$ be a complete probability space and $E$ be a metric space. An $E$-valued set-indexed process $X$ is said to be \emph{\Ci-Markov} with respect to (w.r.t.) the filtration $(\Fi_A)_{A\in\Ai}$ if it is adapted to $(\Fi_A)_{A\in\Ai}$ and if it satisfies
  \begin{equation}  \label{eq:c_markov}
    \espc{f(X_A)}{\Gc{C}} = \espc{f(X_A)}{\vXCp{C}} \quad\text{\Pr-a.s.}
  \end{equation}
  for all $C = A\setminus B\in\Ci$ and any bounded measurable function $f:E\rightarrow \R$.
\end{definition}

\Ci-Markov processes are first studied in the general set-indexed framework (Section \ref{sec:siCMarkov}). We introduce the concept of \emph{\Ci-transition system}, usually designated $\Pii=\brc{\Pc{C}(\vxCp{C};\dt x_A) ; C\in\Ci}$, associated to the \Ci-Markov property. It extends in a natural way the classic definition of transition operators for one-parameter processes, and in particular, the Chapman--Kolmogorov equation which becomes:
\begin{equation} \label{eq:chapman_kolmogorov_short}
  \forall C\in\Ci,\, A'\in\Ai;\qquad \Pc{C} f = \Pc{C'} \Pc{C''} f\quad\text{where }\ C' = C\cap A'\text{, } C''=C\setminus A' 
\end{equation}
and $f$ is a bounded measurable function. This notion happens to properly suit the \Ci-Markov property. Indeed, our main result (Theorems \ref{th:c_markov_charac} and \ref{th:c_markov_construct} in Section \ref{ssec:si_def_construct}) states that the initial law of $X_\zset$ and the transition probabilities naturally deduced from  a \Ci-Markov process form a \Ci-transition system which characterizes entirely the finite-dimensional distributions. Moreover, given an initial measure $\nu$ and \Ci-transition system \Pii, we state that a corresponding \Ci-Markov process can be constructed on the canonical space.

As presented in Section \ref{ssec:si_markov}, the \Ci-Markov property has connections with the set-indexed Markovian literature, especially the work of \citet{Ivanoff.Merzbach(2000)a}. However, we note there that it clearly differs from the \emph{\Qi-Markov} approach of \citet{Balan.Ivanoff(2002)}.

\Ci-Markov processes appear to satisfy several interesting properties (Sections \ref{ssec:si_properties} and \ref{ssec:si_feller}). Theorem \ref{th:c_markov_CI} proves that the natural filtration of \Ci-Markov process displays a \emph{conditional independence} property. Moreover, classic \emph{simple} and  \emph{strong Markov} properties can be extended to the \Ci-Markov formalism (Theorems \ref{th:c_markov_simple} and \ref{th:c_markov_strong}) under an homogeneity assumption on the \Ci-transition system, and for the latter, with the help of the notion of \Ci-Feller processes. To illustrate these different concepts, several examples are given in Section \ref{ssec:si_examples}, including an overview of set-indexed L\'evy processes and the construction of a set-indexed \SaS Ornstein--Uhlenbeck process.

Finally, Section \ref{sec:mpCMarkov} is devoted to the specific properties satisfied by multiparameter \Ci-Markov processes. Specifically, Theorem \ref{th:mpcad} states that multiparameter \Ci-Feller processes have a right-continuous modification which is \Ci-Markov with respect to the augmented filtration. Connections with the multiparameter Markovian literature are also investigated, especially with the two-parameter $\ast$-Markov property and the recent multiparameter developments presented by \citet{Khoshnevisan(2002)}. Several examples of common multiparameter \Ci-Markov processes are given at the end of the section.

\section{\texorpdfstring{Set-indexed \Ci-Markov property}{Set-indexed C-Markov property}} \label{sec:siCMarkov}

We begin this section with a few observations and remarks. 

In this work are considered $E$-valued set-indexed processes $X=\brc{X_A; A\in\Ai}$, where $(E,d_E)$ is a locally compact separable metric space endowed with the Borel sigma-algebra $\Ei$. If $(E,\Ei) = (\R^d,\Bi(\R^d))$, then the assumption \emph{shape} on the indexing collection allows to define the extension $\Delta X$ of $X$ on the classes \Aiu and \Ci. It is given by the following inclusion-exclusion formulas:
\[
  \Delta X_B \eqdef \sum_{i=1}^k X_{A_i} - \sum_{i<j} X_{A_i \cap A_j} + \dotsb + (-1)^{k+1} X_{A_1\cap\dotsb\cap A_k} \quad\text{and}\quad \Delta X_C \eqdef X_A - \Delta X_B.
\]
Due to Lemma 3.4 by \citet{Ivanoff.Merzbach(2000)a} and using the notations previously introduced, we observe that the previous formulas can be equivalently written as follows:
\begin{align} \label{eq:inc_exc_formula}
  \Delta X_B = \sum_{i=1}^{\abs{\Cp{C}}} (-1)^{\eCpi{i}} X_{\Cpi{C}{i}} \qquad\text{and}\qquad \Delta X_C = X_A - \bktbb{ \sum_{i=1}^{\abs{\Cp{C}}} (-1)^{\eCpi{i}} X_{\Cpi{C}{i}} },
\end{align}
where $(-1)^{\eCpi{i}}$ corresponds the sign in front of $X_{\Cpi{C}{i}}$ in the inclusion-exclusion formula. 
In other words, Equation \eqref{eq:inc_exc_formula} states that every term $X_V$ such that $V\notin\Cp{C}$ is cancelled by another element in the inclusion-exclusion formula. 

In consequence, we note that Definition \ref{def:c_markov} can be equivalently written as follows: an $\R^d$-valued set-indexed process $X$ is \Ci-Markov with respect to $(\Fi_A)_{A\in\Ai}$ if for all $C\in\Ci$ and any measurable function $f:E\rightarrow\R_+$,
\[
  \espc{f(\Delta X_C)}{\Gc{C}} = \espc{f(\Delta X_C)}{\vXCp{C}} \quad\text{\Pr-a.s.}
\]
Indeed, owing to Equation \eqref{eq:inc_exc_formula}, $\Delta X_B$ is measurable with respect to $\Gc{C}$ and $\sigma(\vXCp{C})$, and therefore, the equality $X_A = \Delta X_C + \Delta X_B$ and classic properties of the conditional expectation imply the equivalence of the two definitions.

Finally, let us recall that the natural filtration of a set-indexed process $X$ is defined by $\Fi_A = \sigma(\brc{X_V; V\subseteq A, V\in\Ai})$ for all $A\in\Ai$. Filtrations are always supposed to be complete. Based on Definition \ref{def:c_markov}, we observe that any \Ci-Markov process is always \Ci-Markov with respect to its natural filtration.

\subsection{\texorpdfstring{\Ci-transition system: characterization and construction results}{C-transition system, characterization and construction}} \label{ssec:si_def_construct}

In the light of Definition \ref{def:c_markov} of \Ci-Markov processes, there is a natural way to introduced the concept of \emph{\Ci-transition system}.
\begin{definition}[\textbf{\Ci-transition system}] \label{def:c_markov_tr}
  A collection $\Pii = \brc{\Pc{C}(\vxCp{C};\dt x_A);\,C\in\Ci}$ is called a \emph{\Ci-transition system} if it satisfies the following properties:
  \begin{enumerate}[ \it 1.]
    \item for all $C\in\Ci$, $\Pc{C}(\vxCp{C};\dt x_A)$ is a transition probability, i.e. for all $\vxCp{C}\in E^{\abs{\Cp{C}}}$, $\Pc{C}(\vxCp{C};\,\cdot\,)$ is a probability measure on $(E,\Ei)$ and for all $\Gamma\in\Ei$, $\Pc{C}(\,\cdot\,;\Gamma)$ is a measurable function;\vsp
    \item for all $x\in E$ and $\Gamma\in\Ei$, $\Pc{\vset}(x;\Gamma) = \delta_x(\Gamma)$; \vsp
    \item for all $C\in\Ci$ and any $A'\in\Ai$, let $C' = C\cap A'$ and $C'' = C\setminus A'$. Then, \Pii satisfies a Chapman--Kolmogorov like equation:
    \begin{equation} \label{eq:chapman_kolmogorov}
      \Pc{C}(\vxCp{C};\Gamma) = \int_E \Pc{C'}(\vxCp{C'};\dt x_{A'}) \, \Pc{C''}(\vxCp{C''};\Gamma),
    \end{equation}
    for all $\vxCp{C}\in E^{\abs{\Cp{C}}}$, $\vxCp{C'}\in E^{\abs{\Cp{C'}}}$, $\vxCp{C''}\in E^{\abs{\Cp{C''}}}$ and $\Gamma\in\Ei$.
  \end{enumerate}
\end{definition}

\begin{remark}
  Let $C = A\setminus B\in\Ci$ and $A'\in\Ai$. We observe that if $A'\subseteq B$ or $A\subseteq A'$, then $C'=\vset$ or $C''=\vset$ respectively, and therefore, Equation \eqref{eq:chapman_kolmogorov} is straight forward since one of the term is a Dirac distribution.

  On the other hand, if $A'\nsubseteq B$ and $A'\subset A$, Equation \eqref{eq:chapman_kolmogorov} is still consistent. Indeed, as $C'' = A\setminus(A'\cup B)$, and $A'\nsubseteq B$, we have $A'\in\Cp{C''}$, and thus, the variable $x_{A'}$ is one of the component of the vector $\vxCp{C''}$.

  We also note that Equation \eqref{eq:chapman_kolmogorov} implicitly induces that the integral does not depend on any variable $x_V$ with $V\notin\Cp{C}$, since these terms do not appear in the left-term. 
\end{remark}

\begin{remark}
  In the particular case of $\Ti=\R_+$ and $\Ai=\brc{\ivff{0,t};\, t\in\R_+}$, Definition \ref{def:c_markov_tr} corresponds to the usual definition of a transition system. Indeed, as $\Aiu = \Ai$ and $\Ci = \brc{\ivof{s,t};\, s,t\in\R_+^2}$, $\Pii$ is indexed by $\R_+^2$: $\Pii = \brc{\Pc{s,t}(x;\dt y);\, (s,t)\in\R_+^2}$. Then, Definition \ref{def:c_markov_tr} states that:
  \begin{enumerate}[ \it 1.]
    \item for all $s,t\in\R_+^2$, $\Pc{s,t}(x;\dt y)$ is a transition probability;
    \item for all $s\in\R_+$, $\Pc{s,s}(x,\dt y) = \delta_x(\dt y)$;
    \item for all $s<s'<t\in\R_+$, $x\in E$ and $\Gamma\in\Ei$, \Pii satisfies
    $\Pc{s,t}(x;\Gamma) = \int_E \Pc{s,s'}(x;\dt y)\;\Pc{s',t}(y;\Gamma)$ (using previous notations, $C = \ivof{s,t}$, $C'=\ivof{s',t}$ and $C''=\ivof{s,s'}$).
  \end{enumerate}
\end{remark}

Proposition \ref{prop:c_markov_proc_tr} proves that Definition \ref{def:c_markov_tr} of a \Ci-transition systems is coherent with the \Ci-Markov property.
\begin{proposition} \label{prop:c_markov_proc_tr}
  Let $(\Omega,\Fi,(\Fi_A)_{A\in\Ai},\Pr)$ be a complete probability space and $X$ be a $\Ci$-Markov process w.r.t. $(\Fi_A)_{A\in\Ai}$. For all $C=A\setminus B\in\Ci$, define $\Pc{C}(\vxCp{C};\dt y)$ as follows:
  \[
    \forall \vxCp{C}\in E^{\abs{\Cp{C}}}, \Gamma\in\Ei;\quad \Pc{C}(\vxCp{C};\Gamma) \eqdef \prc{X_A\in\Gamma}{\vXCp{C}=\vxCp{C}}.
  \]
  Then, the collection $\Pii=\brc{\Pc{C}(\vxCp{C};\dt y);\,C\in\Ci}$ is a \Ci-transition system.
\end{proposition}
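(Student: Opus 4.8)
The plan is to verify the three defining properties of Definition~\ref{def:c_markov_tr} in turn, the Chapman--Kolmogorov relation being the only substantial one. Property~1 is immediate: since $(E,d_E)$ is locally compact and separable, it is Polish, so a regular version of the conditional law of $X_A$ given $\vXCp{C}$ exists, and taking it as $\Pc{C}(\vxCp{C};\dt x_A)$ makes each $\Pc{C}$ a transition probability by construction. Property~2 follows from the canonical representation: when $C=\vset$ one has $A=B$, its extremal representation is $\brc{A}$, and since $A^\circ\neq A$ forces $A\nsubseteq A^\circ$ we get $\Cp{C}=\brc{A}$, i.e. $\vXCp{C}=X_A$; the conditional law of $X_A$ given $X_A=x$ admits the version $\delta_x$, giving $\Pc{\vset}(x;\Gamma)=\delta_x(\Gamma)$.

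For the Chapman--Kolmogorov equation, I would first reduce to the generic configuration. Replacing $A'$ by $A\cap A'$ is harmless because $C\subseteq A$ yields $C\cap A'=C\cap(A\cap A')$ and $C\setminus A'=C\setminus(A\cap A')$, so one may assume $A'\subseteq A$; the borderline cases $A'\subseteq B$ and $A'=A$ give $C'=\vset$ or $C''=\vset$ and collapse \eqref{eq:chapman_kolmogorov} to a trivial Dirac integration, as the remark after Definition~\ref{def:c_markov_tr} points out. So assume $A'\subsetneq A$ and $A'\nsubseteq B$, whence $A'\in\Cp{C''}$. The backbone is then iterated conditioning along the strong history. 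Since $C''=C\setminus A'\subseteq C$ one has $\Gc{C}\subseteq\Gc{C''}$, so the definition of $\Pc{C}$, the \Ci-Markov property for $C$, the tower property, and the \Ci-Markov property for $C''$ give successively
\begin{equation*}
  \Pc{C}(\vXCp{C};\Gamma)=\prc{X_A\in\Gamma}{\Gc{C}}=\espcb{\prc{X_A\in\Gamma}{\Gc{C''}}}{\Gc{C}}=\espcb{\Pc{C''}(\vXCp{C''};\Gamma)}{\Gc{C}}.
\end{equation*}

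It remains to turn this conditional expectation into the integral against $\Pc{C'}$, and this rests on a combinatorial description of the three boundary families. I would show that every $U\in\Cp{C}$ satisfies $U\subseteq B$, that every $U\in\Cp{C'}$ satisfies $U\subseteq A'\cap B\subseteq B$, and that $\Cp{C''}$ splits as $\brc{A'}$ together with the sets $U\subseteq B$ it contains; moreover, using monotonicity of the interior, each such $U\neq A'$ lies in $\Cp{C}$ (when it avoids $A'$) or in $\Cp{C'}$ (when it is an intersection involving $A'$). Consequently $X_{A'}$ is the only coordinate of $\vXCp{C''}$ that is not $\Gc{C}$-measurable, while the remaining coordinates are coordinates of $\vXCp{C}$ or of $\vXCp{C'}$. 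Since $\Gc{C}\subseteq\Gc{C'}$ and $\vXCp{C'}$ is $\Gc{C}$-measurable, a further tower step and the \Ci-Markov property for $C'$ identify $\prc{X_{A'}\in\cdot}{\Gc{C}}=\Pc{C'}(\vXCp{C'};\cdot)$; freezing the $\Gc{C}$-measurable coordinates and integrating $X_{A'}$ against this kernel turns the display above into \eqref{eq:chapman_kolmogorov}.

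I expect the combinatorial step to be the main obstacle: one must track carefully the semilattices generated by the extremal representations of $B$, $A'\cap B$ and $A'\cup B$, and the interior conditions $U\nsubseteq(\cdot)^\circ$ that select $\Cp{C}$, $\Cp{C'}$ and $\Cp{C''}$, in order to confirm both the splitting of $\Cp{C''}$ and the membership of its non-$A'$ coordinates in $\Cp{C}$ or $\Cp{C'}$. A secondary, routine point is that the conditional identities hold only almost surely; choosing regular versions throughout yields \eqref{eq:chapman_kolmogorov} for almost every value of the boundary vector, which is the intended reading of the equation.
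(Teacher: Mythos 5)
Your proof is correct and follows essentially the same route as the paper's: the Chapman--Kolmogorov identity is obtained by the same two tower-property steps through $\Gc{C}\subseteq\Gc{C''}$ and $\Gc{C}\subseteq\Gc{C'}$, the observation that $X_{A'}$ is the only non-$\Gc{C}$-measurable coordinate of $\vXCp{C''}$, and a monotone class argument. The only divergences are cosmetic: for property 2 the paper argues via $\Gc{\vset}=\bigvee_{V\in\Ai}\Fi_V$ rather than via $\Cp{\vset}=\brc{A}$, and you are somewhat more explicit about the combinatorial bookkeeping that the paper states as a bare observation.
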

\begin{proof}
  Let us verify the different points of Definition \ref{def:c_markov_tr}.
  \begin{enumerate}[ \it 1.]
    \item Clearly, for any $C=A\setminus B\in\Ci$, $\Pc{C}(\vxCp{C};\dt x_A)$ is a transition probability.\vsp
    
    \item The equality $\Pc{\vset}(x;\dt y) = \delta_x(\dt y)$ holds since, $\Gc{\vset}=\vee_{V\in\Ai} \Fi_V$ and therefore, for any $f$ bounded measurable function and any $A\in\Ai$, $\espc{f(X_A)}{\Gc{\vset}} = f(X_A)$.\vsp
    
    \item For all $C = A\setminus B\in\Ci$ and all $A'\in\Ai$ such that $A'\subseteq A$ and $A'\nsubseteq B$, let $C' = C\cap A' = A'\setminus (A'\cap B)$ and $C'' = C\setminus A' = A\setminus(A'\cup B)$. Since $C''\subset C$, we observe that $\Gc{C}\subseteq\Gc{C''}$ and thus, for any $\Gamma\in\Ei$,
    \begin{align} \label{eq:dem1}
      \Pc{C}(\vXCp{C};\Gamma) = \prc{X_A\in\Gamma}{\Gc{C}} = \prcb{\prc{X_A\in\Gamma}{\Gc{C''}} }{\Gc{C}} = \espc{ \Pc{C''}(\vXCp{C''};\Gamma) }{\Gc{C}}.
    \end{align}
    Furthermore, $\Gc{C}\subseteq\Gc{C'}$ and the vector $\vXCp{C'}$ is $\Gc{C}$-measurable ($A'\cap B\subset B$). Hence, for any positive measurable function $h$,
    \begin{equation} \label{eq:dem2}
      \espc{h(X_{A'})}{\Gc{C}} = \espcb{ \espc{h(X_{A'})}{\Gc{C'}} }{\Gc{C}} = \int_\R \Pc{C'}(\vXCp{C'};\dt x_{A'}) \, h(x_{A'}).
    \end{equation}
    As $C'' = A\setminus(A'\cup B)$, we observe that $X_{A'}$ is the only term in the vector $\vXCp{C''}$ which is not \Gc{C}-measurable.
    Therefore, using a monotone class argument, Equations \eqref{eq:dem1} and \eqref{eq:dem2} lead to the expected equality.
  \end{enumerate}
\end{proof}
The next two theorems gather the main result of this section: for any \Ci-Markov process, its initial distribution and its \Ci-transition system characterize entirely the law of the process. Conversely, from any probability measure and any \Ci-transition system, a corresponding canonical \Ci-Markov process can be constructed. To our knowledge, such a result does not exist for other set-indexed Markov properties, or at least require some tricky technical assumption in the case of \Qi-Markov.

\begin{theorem} \label{th:c_markov_charac}
  Let $(\Omega,\Fi,(\Fi_A)_{A\in\Ai},\Pr)$ be a complete probability space and $X$ be a $\Ci$-Markov process w.r.t. $(\Fi_A)_{A\in\Ai}$. 
  
  Then, the initial distribution $\nu$ (i.e. the law of $X_\zset$) and the \Ci-transition system $\Pii = \brc{\Pc{C}(\vxCp{C};\dt x_A);\,C\in\Ci}$ of the process $X$ characterize entirely its law.
\end{theorem}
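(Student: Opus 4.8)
The plan is to reduce the statement to the finite-dimensional distributions of $X$, since a set-indexed process is determined in law by the joint laws of $\pth{X_{U_1},\dotsc,X_{U_n}}$ over finite subfamilies of \Ai. I would fix such a finite family, close it under intersection (legitimate by assumption~\emph{(i)}, which keeps it finite and inside \Ai), and adjoin \zset, obtaining an intersection-closed collection $\brc{U_1,\dotsc,U_n}\subset\Ai$. The crucial preparatory step is to relabel the sets along a \emph{linear extension} of the inclusion order, so that $U_i\subseteq U_j$ forces $i\le j$ and $U_1=\zset$. For $k\ge 2$ I then set $B_{k-1}\eqdef U_1\cup\dotsb\cup U_{k-1}\in\Aiu$ and $C_k\eqdef U_k\setminus B_{k-1}\in\Ci$.

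Two measurability facts make the induction run. First, since $C_k\cap B_{k-1}=\vset$ with $B_{k-1}\in\Aiu$, the definition~\eqref{eq:def_filtrations} of the strong history yields $\Fi_{B_{k-1}}\subseteq\Gc{C_k}$; as each $U_j$ with $j<k$ satisfies $U_j\subseteq B_{k-1}$ and $X$ is adapted, we get $\sigma\pth{X_{U_1},\dotsc,X_{U_{k-1}}}\subseteq\Gc{C_k}$. Second, I claim $\Cp{C_k}\subseteq\brc{U_1,\dotsc,U_{k-1}}$: its elements lie in the semilattice $\Ai_\ell$ generated by the extremal representatives of $B_{k-1}$, hence, by closure under intersection, belong to $\brc{U_1,\dotsc,U_n}$, and each is contained in some $U_j$ with $j<k$, so the linear extension forces its index to be $<k$. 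The same ordering together with the \emph{Shape} hypothesis shows $U_k\nsubseteq B_{k-1}$, so $C_k$ is a genuine increment. Consequently $\vXCp{C_k}$ is a subvector of $\pth{X_{U_1},\dotsc,X_{U_{k-1}}}$.

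Now I would peel off the top variable. Because $\prod_{k=1}^{n-1}f_k(X_{U_k})$ is $\Gc{C_n}$-measurable, the \Ci-Markov property and the identification of $\Pc{C_n}$ in Proposition~\ref{prop:c_markov_proc_tr} give
\[
  \esp{\prod_{k=1}^n f_k(X_{U_k})} = \esp{\prod_{k=1}^{n-1}f_k(X_{U_k})\,\Pc{C_n}\pthb{\vXCp{C_n};f_n}},
\]
where $\Pc{C_n}(\vXCp{C_n};f_n)\eqdef\int_E f_n(x_{U_n})\,\Pc{C_n}(\vXCp{C_n};\dt x_{U_n})$ is, by the second fact above, a function of $X_{U_1},\dotsc,X_{U_{n-1}}$ only. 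Iterating this reduction down to $U_1=\zset$, whose law is $\nu$, rewrites the joint expectation as the iterated integral
\[
  \int_E\nu(\dt x_{\zset})\int_E\Pc{C_2}(\vxCp{C_2};\dt x_{U_2})\dotsm\int_E\Pc{C_n}(\vxCp{C_n};\dt x_{U_n})\,\prod_{k=1}^n f_k(x_{U_k}),
\]
a quantity depending only on $\nu$ and the kernels of \Pii. As the bounded measurable $f_k$ are arbitrary, this determines every finite-dimensional distribution, hence the law of $X$.

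I expect the main obstacle to be the combinatorial bookkeeping of the second paragraph: checking that one single linear extension simultaneously guarantees $\Cp{C_k}\subseteq\brc{U_1,\dotsc,U_{k-1}}$ for every $k$ and that each $C_k$ is non-degenerate. This rests on the interplay between the \emph{Shape} hypothesis, the uniqueness of the extremal representation, and closure under intersection, and is where care is needed to keep the indexing consistent across all stages of the induction.
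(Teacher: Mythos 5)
Your proposal is correct and follows essentially the same route as the paper: reduce to finite-dimensional distributions, close the index family under intersection into a consistently numbered semilattice containing \zset, form the left-neighbourhoods $C_k=U_k\setminus(U_1\cup\dotsb\cup U_{k-1})$, and peel off the top variable by conditioning on $\Gc{C_k}$ and using the \Ci-Markov property together with the $\Gc{C_k}$-measurability of the earlier coordinates and the inclusion $\Cp{C_k}\subseteq\brc{U_1,\dotsc,U_{k-1}}$. The only difference is that you spell out the combinatorial verifications (that the consistent numbering forces $\Cp{C_k}$ into the earlier sets and that each $C_k$ is a genuine increment) which the paper dispatches with the single remark that $\Ai_\ell$ is a semilattice.
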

\begin{proof}
  We have to express the finite-dimensional distributions of $X$ in terms of $\nu$ and $\Pii$.
  
  Let $A_1,\dotsc,A_k\in\Ai$ and $f: E^k\rightarrow \R_+$ be a measurable function. Without any loss of generality, we can assume that $\Ai_\ell=\brc{A_0 = \zset,A_1,\dotsc,A_k}$ is a finite semilattice with consistent numbering, i.e. stable under intersection and such that $A_j\subseteq A_i$ implies $j\leq i$. 
  
  Let $C_1,\dots,C_k\in\Ci$ be the left-neighbourhoods $C_i = A_i \setminus (\cup_{j=0}^{j-1} A_j)$, $1\leq i\leq k$. Since $\Ai_\ell$ is semilattice, $\Cp{C_i}\subset \Ai_\ell$ for any $i\in\brc{1,\dotsc,k}$.
  
  Then, if we consider the law of $(X_{A_1},\dotsc,X_{A_k})$, we observe that
  \begin{align*}
    \esp{f(X_{A_1},\dotsc,X_{A_k})}
    &= \espb{ \espc{ f(X_{A_1},\dotsc,X_{A_k}) }{\Gc{C_k}} } \\
    &= \espB{ \int_E \Pc{C_k}(\vXCp{C_k};\dt x_{A_k}) f(X_{A_1},\dotsc,X_{A_{k-1}},x_{A_k} ) },
  \end{align*}
  using a monotone class argument and the $\Gc{C_{k}}$-measurability of the vector $(X_{A_1},\dotsc,X_{A_{k-1}})$. Therefore, by induction,
  \begin{align*}
    &\esp{f(X_{A_1},\dotsc,X_{A_k})} \\
    &= \espB{ \int_{E^k} \Pc{C_1}(X_\zset;\dt x_{A_1})\, \Pc{C_2}(\vXCp{C_2};\dt x_{A_2})\dotsb \Pc{C_k}(\vXCp{C_k};\dt x_{A_k})\, f(x_{A_1},\dots,x_{A_k}) } \\
    &= \int_{E^{k+1}} \nu(\dt x_0)  \Pc{C_1}(x_0;\dt x_{A_1})\, \Pc{C_2}(\vXCp{C_2};\dt x_{A_2})\dotsb \Pc{C_k}(\vXCp{C_k};\dt x_{A_k})\, f(x_{A_1},\dots,x_{A_k}),
  \end{align*}
  since $\Cp{C_1} = \brc{\zset}$.
\end{proof}

To obtain the construction theorem, we need the following technical lemma.
\begin{lemma} \label{lemma:markov_semilattice}
  Consider a \Ci-transition system $\Pii = \brc{\Pc{C}(\vxCp{C};\dt x_A);\,C\in\Ci}$ and two semilattices $\Ai^1 = \brc{A^1_0=\zset,A^1_1,\dotsc,A^1_{n_1}}$ and $\Ai^2 = \brc{A^2_0=\zset,A^2_1,\dotsc,A^2_{n_2}}$ consistently numbered and such that $\Ai^1\subseteq\Ai^2\subset\Ai$. 
  
  Let $C^1_1,\dotsc,C^1_{n_1}\in\Ci$ and $C^2_1,\dotsc,C^2_{n_2}\in\Ci$ be the left-neighbourhoods of $\Ai^1$ and $\Ai^2$, respectively, i.e. $C^1_i = A^1_i\setminus(\cup_{j=0}^{i-1}A^1_j)$ and $C^2_i = A^2_i\setminus(\cup_{j=0}^{i-1}A^2_j)$. Then, for all $x\in E$ and any positive measurable function $f:E^{n_1}\mapsto\R_+$, \Pii satisfies
  \begin{align} \label{eq:big_lemma}
     & \int_{E^{n_1}} \Pc{C^1_1}(x;\dt x_{A^1_1})\,\Pc{C^1_2}(\vxCp{C^1_2};\dt x_{A^1_2})\dotsb \Pc{C^1_{n_1}}(\vxCp{C^1_{n_1}};\dt x_{A^1_{n_1}})\, f(x_{A^1_1},\dotsc,x_{A^1_{n_1}}) \nonumber \\
    =& \int_{E^{n_2}} \Pc{C^2_1}(x;\dt x_{A^2_1})\,\Pc{C^2_2}(\vxCp{C^2_2};\dt x_{A^2_2})\dotsb \Pc{C^2_{n_2}}(\vxCp{C^2_{n_2}};\dt x_{A^2_{n_2}})\, f(x_{A^1_1},\dotsc,x_{A^1_{n_1}}).
  \end{align}
  We note that since $\Ai^1\subseteq\Ai^2$, every variable $x_{A^1_i}$ has a corresponding term $x_{A^2_{j_i}}$ where $j_i\in\brc{1,\dotsc,n_2}$, ensuring the consistency of the second term in Equation \eqref{eq:big_lemma}.
\end{lemma}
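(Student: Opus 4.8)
The plan is to prove \eqref{eq:big_lemma} by induction on the number $n_2-n_1$ of sets lying in $\Ai^2$ but not in $\Ai^1$, the case $n_2=n_1$ (where $\Ai^1=\Ai^2$) being a tautology. For the inductive step I would pick any $A'\in\Ai^2\setminus\Ai^1$ and first observe that $A'$ is meet-irreducible in $\Ai^2$: were $A'=U_1\cap U_2$ with $U_1,U_2\in\Ai^2\setminus\brc{A'}$, then $U_1,U_2\in\Ai^1$ and hence $A'=U_1\cap U_2\in\Ai^1$, a contradiction. Consequently no intersection of sets of $\Ai^2\setminus\brc{A'}$ equals $A'$, so $\Ai^2\setminus\brc{A'}$ is again a semilattice, and it contains $\Ai^1$ while having one fewer extra set. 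By the induction hypothesis the $\Ai^1$-integral in \eqref{eq:big_lemma} equals the integral built over $\Ai^2\setminus\brc{A'}$, so it remains to compare the latter with the full $\Ai^2$-integral; equivalently, to integrate the variable $x_{A'}$ out of the $\Ai^2$-integral of an integrand that, since $A'\notin\Ai^1$, does not depend on $x_{A'}$.

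Meet-irreducibility also tells me that $A'$ has at most one upper cover in $\Ai^2$: two distinct covers $U_1,U_2$ would satisfy $U_1\cap U_2=A'$, again impossible. Granting that the value of the iterated integral does not depend on the particular consistent numbering chosen --- a point I would settle separately --- I may renumber $\Ai^2$ conveniently and split into two sub-cases. Write $D_1,\dotsc,D_{n_2-1}$ for the left-neighbourhoods of $\Ai^2\setminus\brc{A'}$.

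If $A'$ is maximal in $\Ai^2$, I number it last, $A'=A^2_{n_2}$. Then every $D_i$ coincides with $C^2_i$, the innermost factor of the $\Ai^2$-integral is $\int_E\Pc{C^2_{n_2}}(\vxCp{C^2_{n_2}};\dt x_{A'})=1$ by the first property of Definition \ref{def:c_markov_tr}, and what remains is exactly the $\Ai^2\setminus\brc{A'}$-integral. If $A'$ is not maximal, with unique upper cover $U$, I number $\Ai^2$ so that $A'=A^2_m$ is immediately followed by $U=A^2_{m+1}$. Setting $B\eqdef\bigcup_{j<m}A^2_j$, a direct computation gives $C^2_m=A'\setminus B$, $C^2_{m+1}=U\setminus(B\cup A')$ and $D_m=U\setminus B$, so that $C^2_m=D_m\cap A'$ and $C^2_{m+1}=D_m\setminus A'$; the shape hypothesis forbids $A'\subseteq B$, so the Chapman--Kolmogorov equation \eqref{eq:chapman_kolmogorov} applies to $D_m$ with distinguished set $A'$ and yields $\Pc{D_m}(\vxCp{D_m};\Gamma)=\int_E\Pc{C^2_m}(\vxCp{C^2_m};\dt x_{A'})\,\Pc{C^2_{m+1}}(\vxCp{C^2_{m+1}};\Gamma)$. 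Because $D_i=C^2_i$ for $i<m$ and $D_i=C^2_{i+1}$ for $i>m$, substituting this identity into the $\Ai^2\setminus\brc{A'}$-integral reinstates $x_{A'}$ together with the factor $\Pc{C^2_m}$ and reproduces the $\Ai^2$-integral, closing the induction.

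I expect the frontier bookkeeping to be the crux. One must check, from the definition \eqref{eq:def_Cfrontier} of $\Cp{C}$ and the inclusion--exclusion description \eqref{eq:inc_exc_formula}, that $A'$ enters the frontier $\Cp{C^2_i}$ of exactly one later increment, namely that of its upper cover $U$, so that $x_{A'}$ occurs in the $\Ai^2$-integral only as the integration variable of $\Pc{C^2_m}$ and as a single component of $\vxCp{C^2_{m+1}}$; this is precisely what makes a single use of \eqref{eq:chapman_kolmogorov} sufficient and leaves all later kernels untouched, and it is where meet-irreducibility is indispensable. A secondary obstacle is the independence of the iterated integral from the consistent numbering, which I would prove by reducing an arbitrary reordering to transpositions of adjacent incomparable sets and invoking \eqref{eq:chapman_kolmogorov} for each.
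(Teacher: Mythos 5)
Your overall strategy --- an induction that removes the elements of $\Ai^2\setminus\Ai^1$ one at a time, with a dichotomy between ``integrate the superfluous variable out against a kernel of total mass one'' and ``recombine two consecutive kernels through the Chapman--Kolmogorov equation \eqref{eq:chapman_kolmogorov}'' --- is exactly the paper's. However, your first step contains a genuine error: you remove \emph{any} $A'\in\Ai^2\setminus\Ai^1$ and justify its meet-irreducibility by asserting that $A'=U_1\cap U_2$ with $U_1,U_2\in\Ai^2\setminus\brc{A'}$ forces $U_1,U_2\in\Ai^1$. This implication is false for an arbitrary choice of $A'$: take $\Ai^1=\brc{\zset}$ and $\Ai^2=\brc{\zset,\,V_1\cap V_2,\,V_1,\,V_2}$ with $V_1,V_2$ incomparable. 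The element $A'=V_1\cap V_2$ belongs to $\Ai^2\setminus\Ai^1$, is not meet-irreducible, and its removal destroys the semilattice property, so neither your induction hypothesis nor the definition of the left-neighbourhoods applies to $\Ai^2\setminus\brc{A'}$. The inference $U_1,U_2\in\Ai^1$ is legitimate only when every element of $\Ai^2$ strictly containing $A'$ already lies in $\Ai^1$, i.e.\ when $A'$ is maximal in $\Ai^2\setminus\Ai^1$ for inclusion; this is precisely what the paper arranges by removing the element of \emph{largest index} not in $\Ai^1$, which consistent numbering makes such a maximal element. With this one-word repair (``any'' $\to$ ``a maximal''), the skeleton of your induction is sound.

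Beyond that, the two points you explicitly defer are where essentially all of the paper's work lies, so the proposal should not be read as complete. The claim that $A'$ enters the frontier $\Cp{C^2_i}$ of at most one later increment is the content of the paper's second case: it is proved by contradiction, using that $\brc{A^2_1,\dotsc,A^2_{i_2-1}}\setminus\brc{A^2_k}$ is still a semilattice to show that two later occurrences $i_1<i_2$ would force $A^2_{i_1}\subset A^2_{i_2}$ and then that $A^2_k$ cannot actually belong to $\Cp{C^2_{i_2}}$. Your cover-based reformulation is workable, but it must still rule out the possibility that $A'$ reappears as an intersection $A^2_j\cap A^2_i$ in the extremal representation of a much later $B_i$; meet-irreducibility together with the placement of the unique cover does yield this, but the verification is not shorter than the paper's, so it cannot be waved through. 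Finally, the independence of the iterated integral from the chosen consistent numbering is genuinely needed (the paper invokes it, with only a brief justification, in the proof of Theorem \ref{th:c_markov_construct}, and your base case $n_1=n_2$ already requires it since the two numberings of the same semilattice may differ); your reduction to transpositions of adjacent incomparable sets is the right idea, though each swap needs only Fubini--Tonelli --- the two kernels do not involve each other's integration variable, since neither set lies in the other's frontier --- rather than \eqref{eq:chapman_kolmogorov}.
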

\begin{proof}
  We proceed by induction on $n_2$. Let $k\in\brc{1,\dotsc,n_2}$ be the highest index such that $A^2_k\in\Ai^2$ and $A^2_k\notin\Ai^1$. Let ${\Ai^2}'$ be $\Ai^2 \setminus \brc{A^2_k}$. We observe that ${\Ai^2}'$ is semilattice, since otherwise, we would have $A^2_k\in\Ai^1$. We consider two different cases.
  \begin{enumerate}[ \it 1.]
    \item Suppose first that for every $i\in\brc{k+1,\dotsc,n_2}$, $A^2_k\notin \Cp{C_i^2}$. 
    Then, we know that for any $i\in\brc{1,\dotsc,n_2}\setminus\brc{k}$, the term $x_{A^2_k}$ does not appear in the component $\Pc{C_i}(\vxCp{C^2_i};\dt x_{A^2_i})$.
    Hence, we can integrate over the variable $x_{A^2_k}$, and since $\int_E \Pc{C^2_k}(\vxCp{C^2_k};\dt x_{A^2_k}) = 1$, we obtain a formula which corresponds to the case of the semilattice ${\Ai^2}'$. \vsp
    
    \item Assume now that there exist $i_m>\dotsb>i_1>k$ such that $A^2_k\in\Cp{C_{i_j}^2}$ for every $j\in\brc{1,\dotsc,m}$. Let us first suppose that $m\geq 2$. 
    
    Then, there exists $i_0\in\brc{1,\dotsc,i_1}$ such that $A^2_{i_0} = A^2_{i_1}\cap A^2_{i_2}$. As $A^2_k\subseteq A^2_{i_1}\cap A^2_{i_2}$, we have $k\leq i_0$. Furthermore, $A^2_{i_0}= A^2_k$ is not possible since $A^2_{i_1},A^2_{i_2}$, and thus $A^2_{i_0}$, belong to $\Ai^1$. Thereby, we necessarily have $A^2_k\in\Cp{C_{i_0}^2}$, which implies $i_0=i_1$ because of $i_1$'s definition.
  
    Hence, $A^2_{i_1}\subset A^2_{i_2}$. But, since $A^2_k\subset A^2_{i_1}$, we have
    \[
      C^2_{i_2} \eqdef A^2_{i_2} \setminus\pth{\cup_{j=0}^{i_2-1} A^2_j} = A^2_{i_2} \setminus\pth{\cup_{j=0,j\neq k}^{i_2-1} A^2_j}.
    \]
    Furthermore, we note that $\brc{A^2_1,\dotsc,A^2_{i_2-1}}\setminus\brc{A^2_k}$ is a semilattice (otherwise we would have $A^2_k\in\Ai^1$), therefore stable under intersections. Hence, due to the previous observations, $\Cp{C^2_{i_2}} \subseteq \brcb{A^2_j\,;\, 0\leq j < i_2 \text{ and } j\neq k}$, which is in contradiction with the assumption $A^2_k\in\Cp{C^2_{i_2}}$. 
     
    Therefore, $m=1$ and without any loss of generality, we can assume that $i_1 = k+1$. The variable $x_{A^2_k}$ is only present in the vector $\vxCp{C^2_{k+1}}$ and thus, our problem is reduced to the computation of the sub-integral $\int_E \Pc{C^2_k}(\vxCp{C^2_k};\dt x_{A^2_k})\, \Pc{C^2_{k+1}}(\vxCp{C^2_{k+1}};\Gamma)$, where $\Gamma\in\Ei$.
    
    Let $C\in\Ci$ be $C=A^2_{k+1}\setminus(\cup_{j=0}^{k-1} A^2_j)$. We observe that $C^2_k = C\cap A^2_k$ and $C^2_{k+1} = C \setminus A^2_k$. Therefore, owing to the Chapman--Kolmogorov Equation \eqref{eq:chapman_kolmogorov}, 
    \[
      \Pc{C}(\vxCp{C};\Gamma) = \int_E \Pc{C^2_k}(\vxCp{C^2_k};\dt x_{A^2_k})\, \Pc{C^2_{k+1}}(\vxCp{C^2_{k+1}};\Gamma).
    \]
    Finally, if we insert the previous formula in Equation \eqref{eq:big_lemma}, we obtain an integral which also corresponds to the case of the semilattice ${\Ai^2}'$.
  \end{enumerate}
  In both cases, the right-integral is simplified into a formula corresponding to the case of the semilattice ${\Ai^2}' = \Ai^2 \setminus \brc{A^2_k}$. Hence, by induction on the size of $\Ai^2$, we obtain Equation~\eqref{eq:big_lemma}.
\end{proof}

We are now able to prove that a canonical \Ci-Markov process can be constructed from any initial probability measure and any \Ci-transition system.

In the following proposition, $\Omega$ designates the canonical space $E^{\Ai}$ endowed with its usual $\sigma$-field \Fi generated by coordinate applications. $X$ denotes the canonical process, i.e. for all $A\in\Ai$ and all $\omega\in\Omega$, $X_A(\omega)=\omega(A)$ and $(\Fi_A)_{A\in\Ai}$ is its natural filtration (as defined at the beginning of this section).

\begin{theorem} \label{th:c_markov_construct}
  Let $\Pii = \brc{\Pc{C}(\vxCp{C};\dt x_A);\,C\in\Ci}$ be a \Ci-transition system and $\nu$ be a probability measure $\nu$ on $(E,\Ei)$.
  
  Then, there exists a unique probability measure $\Pr_\nu$ on $(\Omega,\Fi)$ such that $X$ is a \Ci-Markov process w.r.t. $(\Fi_A)_{A\in\Ai}$ and whose initial measure and \Ci-transition system are $\nu$ and \Pii, respectively:
  \[
    X_\zset \sim \nu\qquad\text{and}\qquad\prc[_\nu]{X_A\in\Gamma}{\Gc{C}} = \Pc{C}(\vXCp{C};\Gamma) \quad\text{$\Pr_\nu$-a.s.}
  \]
  for all $C = A\setminus B\in\Ci$ and all $\Gamma\in\Ei$.
\end{theorem}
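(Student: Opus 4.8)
The goal is to construct a probability measure $\Pr_\nu$ on the canonical space $(\Omega,\Fi) = (E^\Ai,\Fi)$ under which the canonical process $X$ is \Ci-Markov with the prescribed initial law and transition system. The natural strategy is to build $\Pr_\nu$ via the Kolmogorov extension theorem: for every finite semilattice $\Ai_\ell = \brc{A_0=\zset,A_1,\dotsc,A_k}\subset\Ai$ (consistently numbered), I define a finite-dimensional distribution $\mu_{\Ai_\ell}$ on $E^{\Ai_\ell}$ by the iterated-integral formula suggested by Theorem \ref{th:c_markov_charac}, namely
\begin{equation*}
  \mu_{\Ai_\ell}(\dt x_{A_1},\dotsc,\dt x_{A_k}) = \int_E \nu(\dt x_0)\,\Pc{C_1}(x_0;\dt x_{A_1})\,\Pc{C_2}(\vxCp{C_2};\dt x_{A_2})\dotsb\Pc{C_k}(\vxCp{C_k};\dt x_{A_k}),
\end{equation*}
where the $C_i = A_i\setminus(\cup_{j=0}^{i-1}A_j)$ are the left-neighbourhoods. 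Since \Ai is closed under intersection and every finite subset of \Ai generates a finite semilattice, it suffices to define the process on semilattices and then extend; the indexing on general finite subsets follows by embedding them in the semilattice they generate.

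The plan is to carry this out in three steps. \emph{First}, I would check that each $\mu_{\Ai_\ell}$ is a well-defined probability measure on $(E^{\Ai_\ell},\Ei^{\otimes k})$; this uses only point 1 of Definition \ref{def:c_markov_tr} (each \Pc{C} is a transition probability) together with the fact that $\Cp{C_i}\subseteq\Ai_\ell$, so every integrand $\Pc{C_i}(\vxCp{C_i};\,\cdot\,)$ genuinely depends only on variables already integrated, making the iterated integral meaningful. \emph{Second}, and this is the crux, I would verify the Kolmogorov consistency conditions: if $\Ai^1\subseteq\Ai^2$ are two finite consistently-numbered semilattices, then $\mu_{\Ai^2}$ projected onto the coordinates in $\Ai^1$ equals $\mu_{\Ai^1}$. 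This is exactly the content of Lemma \ref{lemma:markov_semilattice}, applied with $f$ a bounded measurable function of the $\Ai^1$-coordinates: Equation \eqref{eq:big_lemma} states precisely that integrating the $\Ai^2$-formula against such an $f$ returns the $\Ai^1$-formula. \emph{Third}, with consistency established, Kolmogorov's extension theorem yields a unique $\Pr_\nu$ on $(\Omega,\Fi)$ whose finite-dimensional marginals are the $\mu_{\Ai_\ell}$, and uniqueness of $\Pr_\nu$ follows since the cylinder sets generate \Fi.

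It then remains to prove that the canonical process $X$ under $\Pr_\nu$ is genuinely \Ci-Markov, i.e. that $\prc[_\nu]{X_A\in\Gamma}{\Gc{C}} = \Pc{C}(\vXCp{C};\Gamma)$ a.s. for every $C=A\setminus B\in\Ci$. I would fix $C=A\setminus B$, let $\brc{A_i}_{i\le k}$ be the extremal representation of $B$, and pick a finite semilattice $\Ai_\ell$ containing $A$, all the $A_i$, and hence all of $\Cp{C}$. Testing against an arbitrary bounded $\Gc{C}$-measurable functional — it suffices to test against functions of finitely many coordinates $X_V$ with $V\cap C=\vset$, which embed into a larger semilattice — I would compute both sides using the explicit finite-dimensional law and the Chapman--Kolmogorov equation \eqref{eq:chapman_kolmogorov} to peel off the factor $\Pc{C}$, showing the conditional expectation collapses exactly to $\Pc{C}(\vXCp{C};\Gamma)$; that the resulting expression depends on the conditioning only through \vXCp{C} gives the Markov property. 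The main obstacle is the \emph{second} step: matching the two iterated integrals across nested semilattices requires exactly the delicate combinatorial bookkeeping — tracking which variable $x_{A^2_k}$ appears in which \Cp{C^2_i} and folding two adjacent transition kernels into one via Chapman--Kolmogorov — that Lemma \ref{lemma:markov_semilattice} was engineered to handle, so the real work is invoking that lemma correctly rather than reproving it. A secondary subtlety is the measurability and approximation argument needed to reduce the general $\Gc{C}$-measurable test functional to cylinder functions, which relies on the filtration definition \eqref{eq:def_filtrations} and a monotone class argument.
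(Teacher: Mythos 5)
Your proposal follows the paper's own proof essentially verbatim: define the finite-dimensional distributions by the iterated-integral formula over the generated semilattice, obtain Kolmogorov consistency from Lemma \ref{lemma:markov_semilattice}, extend, and then verify the \Ci-Markov property by testing against cylinder functions in the coordinates $X_V$ with $V\cap C=\vset$ embedded in a larger semilattice, finishing with a monotone class argument. The only cosmetic difference is that in the final verification the paper does not need to invoke the Chapman--Kolmogorov equation again — it orders the semilattice so that the integral over $x_A$ factors out directly as $\Pc{C}(\vxCp{C};\Gamma)$, using that $A\setminus(\cup_j A'_j)$ is the same increment as $C$ — but this does not affect the correctness of your outline.
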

\begin{proof}
  To construct the measure $\Pr_\nu$, we need to define for any $A_1,\dotsc,A_k\in\Ai$ a probability measure $\mu_{A_1 \dotsc A_k}$ on $(E^k,\Ei^{\otimes k})$ such that this family of probabilities satisfies the usual consistency conditions of Kolmogorov's extension theorem.
  \begin{enumerate}[ \it 1.]
    \item Let $A_1,\dotsc,A_k$ be $k$ distinct sets in $\Ai$ and $\pi$ be a permutation on $\brc{1,\dotsc,k}$. Then,
      \begin{equation} \label{eq:kolmo_consistency1}
        \mu_{A_1\dotsc A_k}(\Gamma_1 \times\dotsb\times \Gamma_k) = \mu_{A_{\pi(1)}\dotsc A_{\pi(k)}}(\Gamma_{\pi(1)} \times\dotsb\times \Gamma_{\pi(k)} )
      \end{equation}
      for all $\Gamma_1,\dotsc,\Gamma_k\in\Ei$.
    \item Let $A_1,\dotsc,A_k,A_{k+1}$ be $k+1$ distinct sets in \Ai. Then,
      \begin{equation} \label{eq:kolmo_consistency2}
        \mu_{A_1\dotsc A_k}(\Gamma_1 \times\dotsb\times \Gamma_k) = \mu_{A_1\dotsc A_k A_{k+1}}(\Gamma_1 \times\dotsb\times \Gamma_k \times E)
      \end{equation}
      for all $\Gamma_1,\dotsc,\Gamma_k\in\Ei$.
  \end{enumerate}
  For any $A_1,\dotsc,A_k$ distinct sets in $\Ai$, we consider $\Ai'=\brc{A'_0=\zset,A'_1,\dotsc,A'_m}$ the smallest semilattice generated by $\brc{\zset,A_1,\dotsc,A_k}$. Let $C_1,\dots,C_m\in\Ci$ be the left-neighbourhoods on $\Ai'$: $C_i = A'_i\setminus(\cup_{j=0}^{i-1} A'_j)$. Then, we define the probability measure $\mu_{A_1\dotsc A_k}$ on $(E^k,\Ei^{\otimes k})$ as follows:
  \begin{align*}
    &\mu_{A_1\dotsc A_k}(\Gamma_1 \times\dotsb\times \Gamma_k) \\
    &= \int_{E^{m+1}} \nu(\dt x_0)\, \Pc{C_1}(x_0;\dt x_{A'_1})\,\Pc{C_2}(\vxCp{C_2};\dt x_{A'_2})\dotsb \Pc{C_m}(\vxCp{C_m};\dt x_{A'_m}) \indi_{\Gamma_1}(x_{A_1})\dotsb\indi_{\Gamma_k}(x_{A_k}).
  \end{align*}
  where $\Gamma_1,\dotsc,\Gamma_k\in\Ei$. Note that for every variable $x_{A_i}$, $i\in\brc{1,\dotsc,k}$, there exists a corresponding $x_{A'_{j_i}}$, $j_i\in\brc{1,\dotsc,m}$. Moreover, the formula above does not depend on the numbering of $\Ai'$, since neither $C_i$ nor $\vxCp{C_i}$, $i\in\brc{1,\dotsc,m}$, are changed with a different consistent ordering. These two remarks ensure that the previous definition is consistent. \vsp
  
  Consider now the first consistency condition that must satisfy $\mu_{A_1\dotsc A_k}$. Let $A_1,\dotsc,A_k$ be $k$ distinct sets in $\Ai$, $\Gamma_1,\dotsc,\Gamma_k$ be in \Ei and $\pi$ be a permutation on $\brc{1,\dotsc,k}$. Then, we know that families $\brc{A_1,\dots,A_k}$ and $\brc{A_{\pi(1)},\dots,A_{\pi(k)}}$ lead to the same semilattice $\Ai'=\brc{A'_0=\zset,A'_1,\dotsc,A'_m}$. Hence, since the definition of $\mu_{A_1\dotsc A_k}$ does not depend on the ordering of the semilattice $\Ai'$, Equation \eqref{eq:kolmo_consistency1} is verified.
  
  In order to check the second consistency condition, let $A_1,\dotsc,A_{k+1}$ be $k+1$ distinct sets in $\Ai$ and $\Gamma_1,\dotsc,\Gamma_k$ be in $\Ei$. Let $\Ai^1 = \brc{A^1_0=\zset,A^1_1,\dotsc,A^1_{n_1}}$ and $\Ai^2 = \brc{A^2_0=\zset,A^2_1,\dotsc,A^2_{n_2}}$ be the two semilattices generated by $\brc{A_1,\dotsc,A_k}$ and $\brc{A_1,\dots,A_k,A_{k+1}}$, respectively. We clearly have $\Ai^1 \subseteq \Ai^2$. Let finally $f:E^{n_1}\mapsto\R_+$ be a positive measurable function defined as follows:
  \[
    \forall (x_{A^1_1},\dotsc,x_{A^1_{n_1}})\in E^{n_1};\quad f(x_{A^1_1},\dotsc,x_{A^1_{n_1}}) = \indi_{\Gamma_1}(x_{A_1})\dotsm\indi_{\Gamma_k}(x_{A_k}).
  \]
  The definition of $f$ is consistent since $\brc{A_1,\dotsc,A_k}\subseteq\Ai^1$. Furthermore, if we apply Lemma \ref{lemma:markov_semilattice} to $\Ai^1$ and $\Ai^2$, we obtain
  \begin{align*}
    &\int_{E^{n_1+1}} \nu(\dt x_0)\,\Pc{C^1_1}(\vxCp{C^1_{0}};\dt x_{A^1_1})\dotsb \Pc{C^1_{n_1}}(\vxCp{C^1_{n_1}};\dt x_{A^1_{n_1}})\, \indi_{\Gamma_1}(x_{A_1})\dotsb\indi_{\Gamma_k}(x_{A_k}) \\
    = &\int_{E^{n_2+1}} \nu(\dt x_0)\,\Pc{C^2_1}(\vxCp{C^2_{0}};\dt x_{A^2_1}) \dotsb \Pc{C^2_{n_2}}(\vxCp{C^2_{n_2}};\dt x_{A^2_{n_2}})\, \indi_{\Gamma_1}(x_{A_1})\dotsb\indi_{\Gamma_k}(x_{A_k}) \,\indi_{\R}(x_{A_{k+1}}),
  \end{align*}
  which exactly corresponds to the second consistency Equation \eqref{eq:kolmo_consistency2}.
  
  Therefore, using Kolmogorov's extension theorem (see e.g. Appendix A in \cite{Khoshnevisan(2002)}), there exists a probability measure $\Pr_\nu$ on $(\Omega,\Fi)$ such that for all $A_1,\dotsc,A_k$ in \Ai and $\Gamma_1,\dotsc,\Gamma_k$ in \Ei,
  \[
    \pr[_\nu]{X_{A_1}\in\Gamma_1,\dotsc,X_{A_k}\in\Gamma_k} = \mu_{A_1\dotsc A_k}(\Gamma_1 \times\dotsb\times \Gamma_k),
  \]
  where $X$ designates the canonical process. A usual monotone class argument ensures the uniqueness of this probability measure.\vsp
   
  Our last point in this proof is to ensure that under $\Pr_\nu$, $X$ is a \Ci-Markov process w.r.t. $(\Fi_A)_{A\in\Ai}$ whose initial measure and \Ci-transition system are $\nu$ and \Pii.
  The first point is clear, since for any $\Gamma_0\in\Ei$, $\pr[_\nu]{X_\zset\in\Gamma_0} = \nu(\Gamma_0)$.
   
  Consider now $C = A\setminus B$ in \Ci, where $B=\cup_{i=1}^k A_i$. Let $A'_1,\dots,A'_m$ be in \Ai such that $A'_i\cap C=\vset$ for every $i\in\brc{1,\dotsc,m}$. Without any loss of generality, we may assume that $\Ai' = \brc{A'_0=\zset,A'_1,\dotsc,A'_m,A}$ is a semilattice with consistent numbering such that $\brc{A_i}_{i\leq k} \subseteq \Ai'$. Let $C_1,\dots,C_m\in\Ci$ be the left-neighbourhoods $C_i = A'_i\setminus(\cup_{j=0}^{i-1}A'_j)$. We note that $C = A\setminus(\cup_{j=0}^{m}A'_j)$ and $X_{A'_i}$ is $\Gc{C}$-measurable for every $i\in\brc{1,\dotsc,m}$. Therefore, for any $\Gamma,\Gamma_1,\dots,\Gamma_m\in\Ei$, we have
  \begin{align*}
    &\espb[_\nu]{\indi_{\Gamma_1}(X_{A'_1})\dotsb\indi_{\Gamma_m}(X_{A'_m}) \, \prc[_\nu]{X_A\in\Gamma}{\Gc{C}} } \\
    &=\prb[_\nu]{X_{A'_1}\in\Gamma_1,\dotsc,X_{A'_m}\in\Gamma_m,X_A\in\Gamma} \\
    &=\int_E \nu(\dt x_0) \brcbb{ \int_{E^{m+1}} \Pc{C_1}(x_0;\dt x_{A'_1})\dotsb \Pc{C_m}(\vxCp{C_m};\dt x_{A'_m})\,\Pc{C}(\vxCp{C};\dt x_{A}) \\
    & \phantom{=\int_E \nu(\dt x_0) \int_{E^{m+1}}\quad} \indi_{\Gamma_1}(x_{A'_1})\dotsb\indi_{\Gamma_m}(x_{A'_m}) \,\indi_{\Gamma}(x_A) }
  \end{align*}
  The integral over $x_A$ does not depend on the other terms, and is equal to $\int_E \Pc{C}(\vxCp{C};\dt x_{A})\,\indi_{\Gamma}(x_A) = \Pc{C}(\vxCp{C};\Gamma)$. Hence, 
  \begin{align*}
    &\espb[_\nu]{\indi_{\Gamma_1}(X_{A'_1})\dotsb\indi_{\Gamma_m}(X_{A'_m}) \, \prc[_\nu]{X_A\in\Gamma}{\Gc{C}} } \\
    &=\int_{E^{m+1}} \nu(\dt x_0) \Pc{C_1}(x_0;\dt x_{A'_1})\dotsb \Pc{C_m}(\vxCp{C_m};\dt x_{A'_m})\,\indi_{\Gamma_1}(x_{A'_1})\dotsb\indi_{\Gamma_m}(x_{A'_m})
\Pc{C}(\vxCp{C};\Gamma) \\
    &=\espb[_\nu]{\indi_{\Gamma_1}(X_{A'_1})\dotsb\indi_{\Gamma_m}(X_{A'_m}) \, \Pc{C}(\vXCp{C};\Gamma) }.
  \end{align*}
  A monotone class argument allows to conclude the proof of the \Ci-Markov property, i.e.
  $\prc[_\nu]{X_A\in\Gamma}{\Gc{C}} = \Pc{C}(\vXCp{C};\Gamma)\ \Pr_\nu$-almost surely for all $C\in\Ci$ and $\Gamma\in\Ei$.
\end{proof}

Common notations $\pr[_x]{\,.\,}$ and $\esp[_x]{\,.\,}$ are used later in the case of Dirac initial distributions $\nu = \delta_x$, $x\in E$. Similarly to the classic Markov theory, if $Z$ is a bounded random variable on the probability space $(\Omega,\Fi,\Pr)$, a monotone class argument shows that the map $x\mapsto\esp[_x]{Z}$ is measurable and for any probability measure $\nu$ on $(E,\Ei)$, 
\[
  \esp[_\nu]{Z} = \int_E \nu(\dt x)\, \esp[_x]{Z}.
\]

\subsection{Set-indexed Markov properties} \label{ssec:si_markov}

As outlined in the introduction, several set-indexed Markov properties have already been investigated in the literature. Hence, it seems natural to wonder if the \Ci-Markov property is related to some of them.

To begin with, let us recall the definitions of the \emph{Markov} and \emph{sharp Markov} properties presented by \citet{Ivanoff.Merzbach(2000)a}.
\begin{definition}[\textbf{Markov SI processes \cite{Ivanoff.Merzbach(2000)a}}]
  A set-indexed process $X$ is said to be \emph{Markov} if for all $B\in\Aiu$ and all $A_1,\dotsc,A_k\in\Ai$ such that $A_i\nsubseteq B$, $i\in\brc{1,\dotsc,k}$,
  \[
    \espcb{ f(X_{A_1},\dotsc,X_{A_k}) }{\Fi_B} = \espcb{ f(X_{A_1},\dotsc,X_{A_k}) }{\Fi_{\partial B} \cap \Fi_{\cup_{i=1}^k A_i} }\quad \quad\text{$\Pr$-a.s.}
  \]
  where $\Fi_{\partial B}$ denotes the $\sigma$-field $\sigma\pthb{\brc{X_A;\, A\in\Ai,A\subseteq B,A\nsubseteq B^\circ}}$ and $f$ is a measurable function $f:\R^k\rightarrow\R_+$.
\end{definition}
\begin{definition}[\textbf{sharp Markov processes \cite{Ivanoff.Merzbach(2000)a}}] \label{def:sharp_markov}
  A set-indexed process $X$ is said to be \emph{sharp Markov} if for all $B\in\Aiu$,
  \begin{equation} \label{eq:sharp_markov}
    \Fi_B \perp \Fi_{B^c} \,|\ \Fi_{\partial B},
  \end{equation}
  where $\Fi_{B^c}$ is defined as the $\sigma$-field $\sigma\pthb{\brc{ X_A;\, A\in\Ai, A\nsubseteq B}}$. We recall that for any $\sigma$-fields $\Fi_1,\Fi_2,\Fi_3$, the notation $\Fi_1 \perp \Fi_2\ |\ \Fi_3$ means that $\Fi_1$ and $\Fi_2$ are conditionally independent given $\Fi_3$. 
\end{definition}
We may also compare our definition to the \emph{set-Markov} property introduced by \citet{Balan.Ivanoff(2002)} and deeply studied in both \cite{Balan.Ivanoff(2002)} and \cite{Balan(2004)}.
\begin{definition}[\textbf{set-Markov processes \cite{Balan.Ivanoff(2002)}}] \label{def:set_markov}
  A set-indexed process $X$ is said to be \emph{set-Markov} if for all $B,B'\in\Aiu$, $B\subseteq B'$ and for any measurable function $f:\R\rightarrow\R_+$,
  \[
    \espcb{f(\Delta X_{B'})}{\Fi_B} = \espcb{f(\Delta X_{B'})}{\Delta X_B}\quad \quad\text{$\Pr$-a.s.}
  \]
\end{definition}

It has been proved in \cite{Ivanoff.Merzbach(2000)a,Balan.Ivanoff(2002)} that the aforementioned Markov properties satisfy the following implications:
\begin{center}
  \emph{set-Markov} $\Rightarrow$ \emph{sharp Markov}\ \ \ and\ \ \ \emph{Markov} $\Rightarrow$ \emph{sharp Markov}.
\end{center}
The latter is an equivalence when the filtration verifies an assumption of \emph{conditional orthogonality} (see Definition $2.3$ in \cite{Ivanoff.Merzbach(2000)a} for more details).

\begin{proposition} \label{prop:sharp_markov}
  Let $(\Omega,\Fi,(\Fi_A)_{A\in\Ai},\Pr)$ be a complete probability space and $X$ be a $\Ci$-Markov process with respect to $(\Fi_A)_{A\in\Ai}$. 
  
  Then, $X$ is also a \emph{Markov} and a \emph{Sharp Markov} process.
\end{proposition}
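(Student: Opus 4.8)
The plan is to establish the \emph{Markov} property directly from the \Ci-Markov hypothesis, and then obtain \emph{sharp Markov} either as a consequence of the known implication \emph{Markov} $\Rightarrow$ \emph{sharp Markov} or by an independent conditional-independence argument. The key observation linking the two frameworks is that for any $B\in\Aiu$ and any increment $C = A\setminus B$, the conditioning $\sigma$-field \Gc{C} of the \Ci-Markov property and the $\sigma$-field $\Fi_B$ appearing in the \emph{Markov} property are closely related, while the conditioning vector \vXCp{C} records exactly the values $X_U$ for $U\in\Cp{C}$, which by \eqref{eq:def_Cfrontier} are the sets in the semilattice $\Ai_\ell$ not contained in $B^\circ$. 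These are precisely the sets generating the ``boundary'' field $\Fi_{\partial B}$ in the sense of the \emph{Markov} definition, so the vector \vXCp{C} should be $\Fi_{\partial B}\cap\Fi_{\cup A_i}$-measurable.

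First I would fix $B\in\Aiu$ and sets $A_1,\dotsc,A_k\in\Ai$ with $A_i\nsubseteq B$, and reduce the multi-set statement to successive single-set conditionings. The natural device is to enlarge $\brc{A_1,\dotsc,A_k}$ together with $B$ into a finite consistently-numbered semilattice and to consider the left-neighbourhood increments $C_i$, exactly as in the proof of Theorem \ref{th:c_markov_charac}. Applying the characterization of finite-dimensional distributions from that theorem, the conditional law of $(X_{A_1},\dotsc,X_{A_k})$ given $\Fi_B$ factorizes through the \Ci-transition kernels $\Pc{C_i}$, each of which depends only on the frontier vectors \vXCp{C_i}. Since every such frontier set lies in $\Ai_\ell$ and is not contained in $B^\circ$, the whole conditional expectation is measurable with respect to $\Fi_{\partial B}\cap\Fi_{\cup_{i=1}^k A_i}$. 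Conversely this field is contained in $\Fi_B$, so the tower property collapses the two conditionings and yields the \emph{Markov} identity.

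The main obstacle I anticipate is bookkeeping rather than conceptual: one must verify carefully that \Gc{C} controls $\Fi_B$ appropriately (note that \Gc{C} is generated by $\Fi_{B'}$ over $B'$ disjoint from $C$, which is \emph{not} literally $\Fi_B$), and that the frontier sets $\Cpi{C}{i}$ coincide with the generators of $\Fi_{\partial B}$ — this is where the \emph{Shape} hypothesis and Lemma 3.4 of \cite{Ivanoff.Merzbach(2000)a}, already invoked for \eqref{eq:inc_exc_formula}, do the work. Once the \emph{Markov} property is in hand, \emph{sharp Markov} follows from the implication \emph{Markov} $\Rightarrow$ \emph{sharp Markov} recalled just above the statement; alternatively, the conditional-independence of $\Fi_B$ and $\Fi_{B^c}$ given $\Fi_{\partial B}$ can be read off directly from the factorization, since the kernels governing sets outside $B$ depend on the past only through the frontier vectors. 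I would present the direct route to \emph{Markov} in full and then cite the known implication for \emph{sharp Markov}.
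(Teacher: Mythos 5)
Your proposal is correct and follows essentially the same route as the paper: embed $B$ and the $A_i$ in a consistently numbered semilattice, iterate the \Ci-Markov conditioning over the left-neighbourhoods $C_i$ with $C_i\cap B=\vset$ (using $\Fi_B\subseteq\Gc{C_i}$), observe that the surviving frontier variables $X_U$ satisfy $U\subseteq B$, $U\nsubseteq B^\circ$ and $U\subseteq\cup_j A_j$, and then invoke the implication \emph{Markov} $\Rightarrow$ \emph{sharp Markov}. The bookkeeping issues you flag (that $\Gc{C_i}$ contains $\Fi_B$ only because $C_i\cap B=\vset$, and that only the non-integrated frontier sets need to generate $\Fi_{\partial B}\cap\Fi_{\cup_i A_i}$) are exactly the points the paper's proof spells out.
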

\begin{proof}
  Let $X$ be \emph{\Ci-Markov} on $(\Omega,\Fi,(\Fi_A)_{A\in\Ai},\Pr)$. Since the \emph{Markov} property implies \emph{Sharp Markov}, we only have to prove that $X$ is \emph{Markov}. 
  
  Let $B = \cup_{i=1}^l A^1_i\in\Aiu$ and $A_1,\dotsc,A_k\in\Ai$  such that $A_i\nsubseteq B$ for every $i\in\brc{1,\dotsc,k}$. The collection of sets $\brc{A_1,\dots,A_k,A^1_1,\dotsc,A^1_l}$ generates a semilattice $\Ai'=\brc{A'_0=\zset,A'_1,\dots,A'_m}$. Without any loss of generality, we may assume that the consistent numbering of $\Ai'$ is such that
  \[
    \exists p\leq m : \forall i\in\brc{1,\dotsc,m};\quad A'_i\nsubseteq B \Longleftrightarrow i\geq p.
  \]
  $C_1,\dotsc,C_m\in\Ci$ designate the usual left-neighbourhoods $C_i = A'_i\setminus \cup_{j=0}^{i-1}A'_j$. Note that for all $i\in\brc{p,\dotsc,m}$, $\Fi_B\subseteq G^*_{C_i}$ as $C_i\cap B=\vset$.
  Then, for any measurable function $f:\R^k\rightarrow\R_+$,
  \begin{align*}
    \espc{f(X_{A_1},\dotsc,X_{A_k})}{\Fi_B} 
    &= \espcb{ \espc{f(X_{A_1},\dotsc,X_{A_k})}{\Gc{C_m}} }{\Fi_B} \\
    &= \espcB{ \int_\R \Pc{C_m}(\vXCp{C_m};\dt x_{A'_m}) \, f(X_{A_1},\dotsc,X_{A_{k-1}},x_{A'_m}) }{\Fi_B},
  \end{align*}
  where we assume that $A'_m=A_k$. Hence, we obtain by induction
  \begin{align} \label{eq:proof_markov}
    &\espc{f(X_{A_1},\dotsc,X_{A_k})}{\Fi_B} \nonumber \\
    &= \espcB{ \int_{\R^{m-p+1}} \Pc{C_p}(\vXCp{C_p};\dt x_{A'_p}) \dotsb \Pc{C_m}(\vXCp{C_m};\dt x_{A'_m})\, f(x_{A_1},\dotsc,x_{A_k}) }{\Fi_B} \nonumber \\
    &= \int_{\R^{m-p+1}} \Pc{C_p}(\vXCp{C_p};\dt x_{A'_p}) \dotsb \Pc{C_m}(\vXCp{C_m};\dt x_{A'_m})\, f(x_{A_1},\dotsc,x_{A_k}),
  \end{align}
  since for all $i\in\brc{1,\dotsc,p-1}$, $X_{A'_i}$ is $\Fi_B$-measurable ($A'_i\subseteq B$).
  
  Consider $i\in\brc{p,\dotsc,m}$. Since $A'_i\subseteq \cup_{j=1}^k A_j$, we know that $\vXCp{C_i}$ is $\Fi_{\cup_{j=1}^k A_j}$-measurable. Furthermore, let $U\in\Cp{C_i}$ such that $U\neq A'_p,\dotsc,A'_m$. Due to the definition of $p$ and as $C_i\cap B = \vset$, $U\subseteq B$ and $U\nsubseteq B^\circ$. Hence, $X_U$ is $\Fi_{\partial B}$-measurable. 
  
  As we integrate over the variables $x_{A'_p},\dotsc,x_{A'_m}$ in Equation \eqref{eq:proof_markov}, the only random variables $X_U$ left are such that $U\in\Cp{C_i}$ and $U\neq A'_p,\dotsc,A'_m$. Therefore, the integral \eqref{eq:proof_markov} is $\Fi_{\cup_{j=1}^k A_j}$- and $\Fi_{\partial B}$-measurable, proving that
  \begin{align*}
    \espc{f(X_{A_1},\dotsc,X_{A_k})}{\Fi_B} = \espcb{ f(X_{A_1},\dotsc,X_{A_k}) }{\Fi_{\partial B} \cap \Fi_{\cup_{i=1}^k A_i} }.
  \end{align*}
\end{proof}

An interesting discussion lies in the comparison between the \emph{set-Markov} and \emph{\Ci-Markov} properties. We first observe that they both satisfy a few basic features which may be expected from any set-indexed Markov definition: they imply the \emph{sharp-Markov} property and processes with independent increments (see Example \ref{ex:indep_incr}) are both \emph{\Ci-Markov} and \emph{set-Markov}. Nevertheless, one might quickly notice that they are not equivalent. Indeed, if we simply consider the empirical process $X_A = \sum_{i=1}^n \indi_{\brc{Z_j\in A}}$, where $(Z_i)_{i\leq n}$ are i.i.d. random variables, we know from \cite{Balan.Ivanoff(2002)} that $X$ is \emph{set-Markov} whereas a simple calculus shows that it is not \emph{\Ci-Markov}. Conversely, the \emph{set-indexed \SaS Ornstein--Uhlenbeck process} presented later is a \emph{\Ci-Markov} process which is not \emph{set-Markov} (see Section \ref{ssec:si_examples}).

The \emph{\Ci-Markov} and \emph{set-Markov} properties can be seen as two consistent ways to present a set-indexed Markov property leading to the definition of a transition system, \Pii and \Qi respectively, which characterizes entirely the finite-dimensional distributions. If both \Pii and \Qi satisfy a Chapman--Kolmogorov like equation,
\begin{equation} \label{eq:chapman_kolmogorov_all}
  \Pc{C} f = \Pc{C'} \Pc{C''} f\quad\text{and}\quad Q_{BB''} f = Q_{B B'}\, Q_{B'B''}f,
\end{equation}
 an important difference lies in the indexing family used. On the one hand, $\Pii=\brc{\Pc{C}(\vxCp{C};\dt x_A); C\in\Ci}$ considers variables indexed by \Ai, whereas on the other hand, $\Qi = \brc{Q_{BB'}(x_B;\dt x_{B'});\,B,B'\in\Aiu,B\subseteq B'}$ used \Aiu as indexing collection. 

In fact, as stated in \cite{Balan.Ivanoff(2002)}, the \Qi-Markov property is defined on the global extension $\Delta X$ on the collection \Aiu. This strong feature explains why a supplementary assumption is needed on the transition system (Theorem $1$ in \cite{Balan.Ivanoff(2002)}), to ensure the existence of $\Delta X$. More precisely, the construction of \Qi-Markov process requires that the integral
\begin{equation} \label{eq:set_markov_assumption}
  \int_{\R^{m+1}} \mu(\dt x_0) \,\indi_{\Gamma_0}(x_0) \prod_{i=1}^m \indi_{\Gamma_i}(x_i - x_{i-1}) \,Q_{\cup_{j=0}^{i-1} A_j,\cup_{j=0}^{i} A_j}(x_{i-1};\dt x_i),
\end{equation}
is independent of the numbering of $\Ai'$, for any semilattice $\Ai'=\brc{A_0,\dotsc,A_m}$. Note that this last assumption is not necessary when $\Qi$ is homogeneous in a certain sense (see the work of \citet{Herbin.Merzbach(2013)}).

On the other hand, the \Ci-Markov property adopts a different approach. The indexing collection is assumed to satisfy the \emph{Shape} assumption so that any set-indexed process is well-defined on \Aiu and the definition of a \Ci-Markov process makes sense (in particular the random vector $\vXCp{C}$, $C\in\Ci$, exists). This hypothesis is not necessary if one only considers processes with independent increments, such as the set-indexed Lévy processes, since these are independently random scattered measures (see \cite{Herbin.Merzbach(2013)} for more details on IRSM). Nevertheless, one can easily check that this Shape assumption is required to define some other classes of \Ci-Markov processes, such as the \emph{set-indexed Ornstein--Uhlenbeck processes} presented in Section \ref{ssec:si_examples}. 

One might also wonder why, on the contrary to the \emph{set-Markov} property and some other set-indexed references, we assume that $\emptyset\notin\Ai$. The explanation lies in the characterization and construction Theorems \ref{th:c_markov_charac} and \ref{th:c_markov_construct}. If we assume that $\vset\in\Ai$, we observe that $\Ai\subset\Ci$, and in particular $\zset\in\Ci$. Thereby, the transition system \Pii incorporates the transition probability $\Pii_{\zset}(x_\vset,\dt x_\zset)$. But since $X_\vset=0$ is always assumed (simply for consistency), we observe that the measure $\Pii_{\zset}(x_\vset,\dt x_\zset)$ in fact corresponds to the law of $X_\zset$. This statement clearly contradicts our will to separate in Theorems \ref{th:c_markov_charac} and \ref{th:c_markov_construct} the initial distribution $\nu$ of a process from its transition system $\Pii$.\vsp

To end this section, let us finally note that when branches of a tree form the indexing collection (as presented in the introduction), \Ci-Markov processes are in fact Markov chains indexed by trees, as defined in the seminal article of \citet{Benjamini.Peres(1994)} (see also the recent work of \citet{Durand(2009)a}).

\subsection{\texorpdfstring{Features of \Ci-Markov processes}{Features of C-Markov processes}} \label{ssec:si_properties}

Following the definition of a set-indexed Markov property, several simple and natural questions arise. Among them, we study in this section the projections of \Ci-Markov processes on flows, conditional independence of filtrations and homogeneity of transition probabilities. 

To begin with, let us recall that an \emph{elementary flow} is a continuous increasing function $f:\ivff{a,b}\subset\R_+\rightarrow\Ai$, i.e.
\begin{enumerate}[\it (i)]
  \item Increasing: $\forall s,t\in\ivff{a,b};\quad s<t \Rightarrow f(s)\subseteq f(t)$;\vsp
  \item Outer-continuous: $\forall s\in\ivfo{a,b};\quad f(s) = \bigcap_{v>s}f(v)$;\vsp
  \item Inner-continuous: $\forall s\in\ivoo{a,b};\quad f(s) = \overline{\bigcup_{u<s} f(u)}$.
\end{enumerate}
As we might expect, the elementary projections of \Ci-Markov processes happen to be one-parameter Markov processes.
\begin{proposition} \label{prop:c_markov_eflow}
  Let $(\Omega,\Fi,(\Fi_A)_{A\in\Ai},\Pr)$ be a complete probability space and $X$ be a $\Ci$-Markov process w.r.t. $(\Fi_A)_{A\in\Ai}$. 
  
  Then, for any elementary flow $f:\ivff{a,b}\rightarrow\Ai$, the projection $X^f$ of $X$ along $f$ is a Markov process w.r.t. the filtration $\Fi^f = \pthb{ \Fi_{f(s)} }_{s\in\ivff{a,b}}$. Furthermore, $X^f$ has the following transition probabilities,
  \[
    \forall s\leq t\in\ivff{a,b};\quad P^f_{s,t}(x_s;\dt x_t) = \Pc{f(t)\setminus f(s)}\pthb{ x_{f(s)};\dt x_{f(t)} },
  \]
  where $\Pii$ is the \Ci-transition system of $X$.
\end{proposition}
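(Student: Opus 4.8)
The plan is to reduce the set-indexed \Ci-Markov property read along $f$ to the classical one-parameter Markov property, and to read off the transition kernel from the Chapman--Kolmogorov equation \eqref{eq:chapman_kolmogorov}. Recall that $X^f_s\eqdef X_{f(s)}$ and $\Fi^f_s=\Fi_{f(s)}$. Fix $s\le t$ in $\ivff{a,b}$ and set $C\eqdef f(t)\setminus f(s)$; since $f$ is increasing, $f(s)\subseteq f(t)$ with $f(s)\in\Ai\subseteq\Aiu$ and $f(t)\in\Ai$, so $C\in\Ci$ is a genuine increment with $A=f(t)$ and $B=f(s)$. The first step is the combinatorial identification of $\Cp{C}$: as $B=f(s)$ is a single element of \Ai, its extremal representation is the singleton $\brc{f(s)}$, hence $\Ai_\ell=\brc{f(s)}$; and since $f(s)\nsubseteq f(s)^\circ$ by assumption (i) (the degenerate case $f(s)=\Ti$ being trivial, as then $X^f$ is constant on $\ivff{s,t}$), definition \eqref{eq:def_Cfrontier} yields $\Cp{C}=\brc{f(s)}$. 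Consequently $\vXCp{C}=X_{f(s)}=X^f_s$.

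The key observation is the inclusion $\Fi^f_s\subseteq\Gc{C}$. Indeed, $f(s)\in\Aiu$ and $f(s)\cap C=\vset$, so the index $f(s)$ appears in the join defining the strong history in \eqref{eq:def_filtrations}, whence $\Fi_{f(s)}\subseteq\Gc{C}$. Since moreover $X$ is adapted, $X^f_s=X_{f(s)}$ is $\Fi_{f(s)}$-measurable, and we obtain the chain $\sigma(X^f_s)\subseteq\Fi^f_s=\Fi_{f(s)}\subseteq\Gc{C}$.

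With these two facts, the Markov property follows by the tower property. For any bounded measurable $g:E\to\R$, the defining equation \eqref{eq:c_markov} reads $\espc{g(X_{f(t)})}{\Gc{C}}=\espc{g(X_{f(t)})}{X^f_s}$. Conditioning both sides on $\Fi^f_s$, the left-hand side collapses to $\espc{g(X^f_t)}{\Fi^f_s}$ (tower property, using $\Fi^f_s\subseteq\Gc{C}$), while the right-hand side is unchanged (it is already $\sigma(X^f_s)\subseteq\Fi^f_s$-measurable); this is exactly the one-parameter Markov property of $X^f$ w.r.t. $\Fi^f$. Taking $g=\indi_\Gamma$ and invoking Proposition \ref{prop:c_markov_proc_tr} to identify $\espc{\indi_\Gamma(X_{f(t)})}{X^f_s}=\Pc{C}(X^f_s;\Gamma)$ gives $\prc{X^f_t\in\Gamma}{\Fi^f_s}=\Pc{f(t)\setminus f(s)}(X^f_s;\Gamma)$, i.e. the announced kernel $P^f_{s,t}$.

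It remains to verify that $(P^f_{s,t})_{s\le t}$ is a genuine one-parameter transition system, which is a direct translation of \eqref{eq:chapman_kolmogorov}. For $s\le s'\le t$, put $A'=f(s')$; using $f(s)\subseteq f(s')\subseteq f(t)$ one computes $C\cap A'=f(s')\setminus f(s)$ and $C\setminus A'=f(t)\setminus f(s')$, whose boundary sets reduce as above to the singletons $\brc{f(s)}$ and $\brc{f(s')}$, so \eqref{eq:chapman_kolmogorov} becomes $P^f_{s,t}(x_s;\Gamma)=\int_E P^f_{s,s'}(x_s;\dt x_{s'})\,P^f_{s',t}(x_{s'};\Gamma)$. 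I expect the only genuinely delicate points to be those of the first two steps, namely the reduction $\Cp{C}=\brc{f(s)}$ and the inclusion $\Fi^f_s\subseteq\Gc{C}$: together they collapse the set-indexed conditioning onto the single variable $X^f_s$, after which the Markov property and the Chapman--Kolmogorov identity are routine consequences of the tower property and of \eqref{eq:chapman_kolmogorov}.
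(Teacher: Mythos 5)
Your proposal is correct and follows essentially the same route as the paper's proof: both rest on the two facts $\Cp{f(t)\setminus f(s)}=\brc{f(s)}$ and $\Fi_{f(s)}\subseteq\Gc{f(t)\setminus f(s)}$, followed by the tower property and the identification of the kernel from Proposition \ref{prop:c_markov_proc_tr}. The paper merely states these two facts without justification, whereas you verify them (and add the Chapman--Kolmogorov check for $P^f$, which is a harmless bonus), so your write-up is a more detailed version of the same argument.
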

\begin{proof}
  We simply check that $X^f$ is a Markov process with the expected transition probabilities. Let $f$ be an elementary flow and $s\leq t$ be in $\ivff{a,b}$. Note that $\Fi_{f(s)}\subseteq \Gi^*_{f(t)\setminus f(s)}$ and $\Cp{f(t)\setminus f(s)} = \brc{f(s)}$. Then, for any $\Gamma\in\Ei$,
  \begin{align*}
    \prcb{X^f_{t}\in\Gamma}{\Fi^f_s}
    &= \prcb{ \prc{X_{f(t)}\in\Gamma}{\Gi^*_{f(t)\setminus f(s)}} }{\Fi_{f(s)}} \\
    &= \espc{ \Pc{f(t)\setminus f(s)}(X_{f(s)};\Gamma) }{\Fi_{f(s)}} = P^f_{s,t}(X^f_s;\Gamma).
  \end{align*}
\end{proof}
Note that the converse result is not true: it is not sufficient to have Markov elementary projections to obtain a \Ci-Markov process. Furthermore, Proposition \ref{prop:c_markov_eflow} can not be extended to \emph{simple flows}, i.e. continuous increasing function $f:\ivff{a,b}\rightarrow\Aiu$, as this property constitutes a characterization of \emph{set-Markov} processes (Proposition 2 in \cite{Balan.Ivanoff(2002)}). \vsp

The \emph{conditional independence} of filtrations is an important property in the theory of multiparameter processes. Also named \emph{commuting property} or (F4), it has first been introduced by \citet{Cairoli.Walsh(1975)} for two-parameter processes and recently extended to the set-indexed formalism by \citet{Ivanoff.Merzbach(2000)}.
\begin{theorem}[\textbf{Conditional independence}] \label{th:c_markov_CI}
  Let $(\Omega,\Fi,(\Fi_A)_{A\in\Ai},\Pr)$ be a complete probability space and $X$ be a $\Ci$-Markov w.r.t. $(\Fi_A)_{A\in\Ai}$. Suppose that $(\Fi_A)_{A\in\Ai}$ is included in the $\sigma$-field $\Fi^0_\infty \eqdef \sigma(\brc{X_A; A\in\Ai})$.
  
  Then, the filtration $(\Fi_A)_{A\in\Ai}$ satisfies the \emph{conditional independence} property (CI), i.e. for all $U,V\in\Ai$ and any bounded random variable $Y$ on the space $(\Omega,\Fi^0_\infty,\Pr)$,
  \[
    \espcb{ \espc{Y}{\Fi_U} }{\Fi_V} = \espcb{ \espc{Y}{\Fi_V} }{\Fi_U} = \espc{Y}{\Fi_{U\cap V}}\quad\text{$\Pr$-a.s.}
  \]
\end{theorem}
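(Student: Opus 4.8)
The plan is to reduce the set-indexed statement to the one-parameter conditional-independence structure by using the \Ci-Markov property to express conditional expectations explicitly via the transition system \Pii. The key observation is that the three quantities appearing in the theorem only involve the $\sigma$-fields $\Fi_U$, $\Fi_V$, and $\Fi_{U\cap V}$, and that $U\cap V\in\Ai$ since \Ai is closed under intersections. Because any bounded $Y$ on $(\Omega,\Fi^0_\infty,\Pr)$ can be approximated (via a monotone class argument) by functions of finitely many coordinates $X_{A_1},\dotsc,X_{A_n}$, it suffices to establish the identity for $Y=f(X_{A_1},\dotsc,X_{A_n})$ with $f$ bounded measurable, and then pass to the limit.

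First I would fix $U,V\in\Ai$ and set $W=U\cap V$, and enlarge the finite family $\brc{A_1,\dotsc,A_n}$ to a consistently numbered semilattice $\Ai'=\brc{A'_0=\zset,\dotsc,A'_m}$ that contains $U$, $V$, $W$ and all the $A_i$. The engine of the argument is Theorem~\ref{th:c_markov_charac}: the finite-dimensional law of $(X_{A'_1},\dotsc,X_{A'_m})$ factorizes as a product of the transition kernels $\Pc{C_i}$ along the left-neighbourhoods $C_i$ of $\Ai'$. The plan is to compute $\espc{Y}{\Fi_U}$ by integrating out, in the correct order dictated by the semilattice, all the kernels associated to sets \emph{not} contained in $U$; the crucial point is that, thanks to the \Ci-Markov property and the structure of \Gc{C}, this conditional expectation is a function only of those $X_{A'_j}$ with $A'_j\subseteq U$, i.e. it is $\Fi_U$-measurable with an explicit integral representation. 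Applying the same procedure with $V$, and then composing the two integrations, should yield a single multiple integral whose remaining kernels are exactly those attached to left-neighbourhoods of sets contained in $W=U\cap V$.

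The heart of the matter is to verify that the iterated integrations \emph{commute} and collapse to $\espc{Y}{\Fi_W}$. Here I would exploit the Chapman--Kolmogorov Equation~\eqref{eq:chapman_kolmogorov} together with Lemma~\ref{lemma:markov_semilattice}, which guarantees that the value of such semilattice integrals does not depend on the consistent numbering chosen. Concretely, when I first condition on $\Fi_U$ and then on $\Fi_V$, I integrate out the kernels for sets outside $U$, and subsequently those for sets outside $V$; because every set $A'_j$ satisfying both $A'_j\subseteq U$ and $A'_j\subseteq V$ already lies below $W$ (by $W=U\cap V$ and the semilattice property), the surviving kernels in both orders of integration coincide with the kernels indexing the left-neighbourhoods of the sub-semilattice $\brc{A'_j : A'_j\subseteq W}$. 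This identifies both $\espc{\espc{Y}{\Fi_U}}{\Fi_V}$ and $\espc{\espc{Y}{\Fi_V}}{\Fi_U}$ with $\espc{Y}{\Fi_W}$.

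I expect the main obstacle to be the careful bookkeeping of \emph{which} transition kernels get integrated out at each stage and the measurability claims that justify pulling $\Fi_U$- or $\Fi_V$-measurable factors outside the relevant integrals. In particular, one must check that after integrating out all kernels $\Pc{C_i}$ with $A'_i\nsubseteq U$, the result is genuinely $\Fi_U$-measurable, which requires that each surviving random vector $\vXCp{C_j}$ depends only on sets below $U$; this is where the extremal-representation structure of \Cp{C} and the \emph{Shape} hypothesis are used implicitly. A secondary technical point is ensuring that the monotone class extension from cylinder functions $f(X_{A_1},\dotsc,X_{A_n})$ to arbitrary bounded $Y$ is legitimate under the hypothesis $(\Fi_A)_{A\in\Ai}\subseteq\Fi^0_\infty$, so that the conditional expectations are computed with respect to $\sigma$-fields generated by the process itself.
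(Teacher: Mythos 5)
Your proposal matches the paper's proof in all essentials: both reduce to cylinder functions $f_1(X_{A_1})\dotsb f_k(X_{A_k})$ by a monotone class argument, embed the $A_i$ together with $U$, $V$ and $U\cap V$ in a consistently numbered semilattice, and successively integrate out the transition kernels attached to the left-neighbourhoods of sets not below $V$ and then of sets below $V$ but not below $U\cap V$, the key verification being that $\Fi_U$ or $\Fi_V$ is contained in the relevant strong history $\Gc{C_i}$ (because $C_i\cap U=\vset$, resp. $C_i\cap V=\vset$) so that the \Ci-Markov property applies at each stage and what survives is $\Fi_{U\cap V}$-measurable. The only cosmetic difference is that the paper fixes a single numbering adapted to the chain $U\cap V\subseteq V$ and conditions on $\Fi_V$ first, so it never needs to renumber or invoke Lemma~\ref{lemma:markov_semilattice}, whereas you condition on $\Fi_U$ first and appeal to numbering-independence and Chapman--Kolmogorov to commute the two integrations.
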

\begin{proof}
  Let $U,V\in\Ai$. If $U\subseteq V$ or $V\subseteq U$, the equality is straight-forward. Hence, we suppose throughout that $U\nsubseteq V$ and $V\nsubseteq U$.
  Using a monotone class argument, we only need to prove that
  \[
    \espcb{ \espc{f_1(X_{A_1})\dotsb f_k(X_{A_k})}{\Fi_U} }{\Fi_V} = \espc{f_1(X_{A_1})\dotsb f_k(A_k)}{\Fi_{U\cap V}},
  \]
  for every $k\in\N$, any $A_1,\dotsc,A_k\in\Ai$ and any $f_1,\dotsc,f_k:E\rightarrow\R_+$ measurable functions. Without any loss of generality, we assume that $\Ai'=\brc{A_0=\zset,A_1,\dotsc,A_k}$ is a semilattice with a consistent numbering and which contains the sets $U$, $V$ and $U\cap V$. We denote as usually $(C_i)_{i\leq k}$ the left-neighbourhoods in $\Ai'$, i.e. $C_i=A_i\setminus(\cup_{j=0}^{i-1} A_j)$. Finally, up to a re-ordering of $\Ai'$, we can suppose there exist $k_{U\cap V}, k_V\in\N$ such that $k_{U\cap V} < k_V$,
  \[
    \forall i\in\brc{1,\dotsc,k_{U\cap V}}; \quad A_i\subseteq A_{k_{U\cap V}}\eqdef U\cap V \quad\text{and}\quad \forall i\in\brc{1,\dotsc,k_{V}}; \quad A_i\subseteq A_{k_{V}}\eqdef V.
  \]
  Then, as $\Fi_V\subseteq\Gc{C_k}$, the \Ci-Markov property induces
  \begin{align*}
    &\espcb{ \espc{f_1(X_{A_1})\dotsb f_k(X_{A_k})}{\Fi_V} }{\Fi_U} \\
    &= \espcb{ \espc{ \espc{ f_1(X_{A_1})\dotsb f_k(X_{A_k}) }{\Gc{C_k}} }{\Fi_V} }{\Fi_U} \\
    &= \espcbb{ \espcB{ \int_E \Pc{C_k}(\vXCp{C_k};\dt x_{A_k}) f_1(X_{A_1})\dotsb f_k(x_{A_k}) }{\Fi_V} }{\Fi_U},
  \end{align*}
  Therefore, by induction,
  \begin{align*}
    & \espcb{ \espc{f_1(X_{A_1})\dotsb f_k(X_{A_k})}{\Fi_V} }{\Fi_U} \\
    &= \espcbb{ \espcB{ \int_{E^{k-k_V}} \Pc{C_{k_V+1}}(\vXCp{C_{k_V+1}};\dt x_{A_{k_V+1}}) \dotsb \Pc{C_k}(\vXCp{C_k};\dt x_{A_k})  \\
    &\phantom{ =\int_{E^{k-k_V}} \qquad q} f_1(X_{A_1})\dotsb f_{k_V}(X_{A_{k_V}})\,f_{k_V+1}(x_{A_{k_V+1}}) \dotsb f_k(x_{A_k}) }{\Fi_V} }{\Fi_U}.
  \end{align*}
  Every random variable $X_{A_i}$ left in the previous integral is such that $A_i\subseteq V$, and thus is $\Fi_V$-measurable. Furthermore, for every $i\in\brc{k_{U\cap V}+1,\dotsc,k_V}$,
  \[
    U\cap C_i \subseteq U\cap (A_i\setminus U\cap V) = (U\cap A_i)\setminus (U\cap V) = \vset\quad \text{since } A_i\subseteq V.
  \]
  Therefore, $\Fi_U\subseteq\Gi^*_{C_i}$ and we obtain
  \begin{align*}
    & \espcb{ \espc{f_1(X_{A_1})\dotsb f_k(X_{A_k})}{\Fi_V} }{\Fi_U} \\
    &= \espcB{ \int_{E^{k-k_V}} \Pc{C_{k_V+1}}(\vXCp{C_{k_V+1}};\dt x_{A_{k_V+1}}) \dotsb \Pc{C_k}(\vXCp{C_k};\dt x_{A_k}) \\
    &\phantom{ =\int_{E^{k-k_V}} \qquad} f_1(X_{A_1})\dotsb f_{k_V+1}(x_{A_{k_V+1}}) \dotsb f_k(x_{A_k}) }{\Fi_U} \\
    &= \espcB{ \int_{E^{k-k_{U\cap V}}} \Pc{C_{k_{U\cap V}+1}}(\vXCp{C_{k_{U\cap V}+1}};\dt x_{A_{k_{U\cap V}+1}}) \dotsb \Pc{C_k}(\vXCp{C_k};\dt x_{A_k}) \\
    &\phantom{ =\int_{E^{k-k_{U\cap V}}} \qquad} f_1(X_{A_1})\dotsb f_{k_{U\cap V}}(X_{A_{k_{U\cap V}}}) f_{k_{U\cap V}+1}(x_{A_{k_{U\cap V}+1}}) \dotsb f_k(x_{A_k}) }{\Fi_U} \\
  \end{align*}
  Finally, every random variable $X_{A_i}$ left in the integral is $\Fi_{U\cap V}$-measurable (as $A_i\subseteq U\cap V$ for every $i\in\brc{1,\dotsc,k_{U\cap V}}$), proving that
  \[
    \espcb{ \espc{f_1(X_{A_1})\dotsb f_k(X_{A_k})}{\Fi_V} }{\Fi_U}
    = \espc{f_1(X_{A_1})\dotsb f_k(A_k)}{\Fi_{U\cap V}}.
  \]
\end{proof}

In the next theorem, we investigate the extension of the \emph{simple Markov property} to the \Ci-Markov formalism.
In order to introduce the concept of \emph{homogeneous \Ci-transition system}, we suppose that there exists a collection of shift operators $(\theta_U)_{U\in\Ai}$ on \Ai which satisfies:
\begin{enumerate}[\it (i)]
  \item for any $U\in\Ai$ such that $U^\circ\neq\vset$, we have $A\subset(\theta_U(A))^\circ$ for all $A\in\Ai$;
  \item for any $A\in\Ai$, $U\mapsto\theta_U(A)$ is an increasing monotone outer-continuous function with $\theta_\zset(A)=A$;
  \item for any $U\in\Ai$, $\theta_U$ is stable under intersections and $\theta_U(\zset)=U$.
\end{enumerate}
For all $U\in\Ai$, the operator $\theta_U$ is extended on \Aiu and \Ci in the following way:
\[
  \theta_U(B) = \cup_{i=1}^k \theta_U(A_i) \quad\text{and}\quad \theta_U(C) = \theta_U(A) \setminus \theta_U(B),
\]
where $B=\cup_{i=1}^k A_i\in\Aiu$ and $C=A\setminus B\in\Ci$.

To illustrate this notion, let simply consider the multiparameter setting $\Ai=\brc{{\ivff{0,t}} : t\in\R_+^N}$. In this framework, the natural family $(\theta_u)_{u\in\Ti}$ of shift operators corresponds to the usual translation on $\R^N_+$, i.e.
\[
  \forall u,t\in\R^N_+;\quad \theta_u(t) = \theta_{\ivff{0,u}}\pthb{ \ivff{0,t} } \eqdef \ivff{0,t+u}.
\]
One can easily verify that $(\theta_u)_{u\in\R^N_+}$ satisfy the previous conditions.

\begin{definition}[\textbf{Homogeneous \Ci-transition system}]
  A \Ci-transition system \Pii is said to be \emph{homogeneous} with respect to $(\theta_U)_{U\in\Ai}$ if it satisfies
  \[
    \forall U\in\Ai,\,\forall C=A\setminus B\in\Ci;\quad \Pc{C}(\vxCp{C};\dt x_A) = \Pc{\theta_U(C)}(\vxCp{ \theta_U(C) };\dt x_{\theta_U(A)} ).
  \]
  A \Ci-Markov process is said to be \emph{homogeneous} w.r.t $(\theta_U)_{U\in\Ai}$ if its \Ci-transition system is itself homogeneous.
\end{definition}

We can now establish a set-indexed \emph{simple \Ci-Markov property}. In the following theorem, $\Omega$ and $X$ refer to the canonical space $E^\Ai$ and the canonical process $X_A(\omega)=\omega(A)$, respectively.
\begin{theorem}[\textbf{Simple \Ci-Markov property}] \label{th:c_markov_simple}
  Let \Pii be an homogeneous \Ci-transition system w.r.t $(\theta_U)_{U\in\Ai}$. Then, the canonical \Ci-Markov process $X$ satisfies
  \[
    \espcb[_\nu]{f(X\circ\theta_U)}{\Fi_U} = \espb[_{X_U}]{f(X)}\quad \Pr_\nu\text{-a.s.},
  \]
  for any $U\in\Ai$, any measurable function $f:E^\Ai\rightarrow\R_+$ and any initial measure $\nu$.
\end{theorem}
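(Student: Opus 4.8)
The plan is to reduce the statement to the finite-dimensional distributions and then exploit the homogeneity of the transition system together with the characterization result of Theorem~\ref{th:c_markov_charac}. By a monotone class argument, it suffices to establish the identity for functions $f$ of the form $f(X) = \prod_{i=1}^k g_i(X_{A_i})$ where $A_1,\dotsc,A_k \in \Ai$ and the $g_i : E \to \R_+$ are bounded measurable. For such an $f$, the composition $f(X \circ \theta_U)$ equals $\prod_{i=1}^k g_i(X_{\theta_U(A_i)})$, so the left-hand side becomes a conditional expectation of a product of increments over the shifted sets, given $\Fi_U$.

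\textbf{First I would} set up a semilattice adapted to the problem. Enlarging the collection if necessary, I would work with a finite semilattice $\Ai' = \brc{A'_0 = \zset, A'_1, \dotsc, A'_m}$, consistently numbered, that contains $U$, the shifted sets $\theta_U(A_1), \dotsc, \theta_U(A_k)$, and all the sets $A'_j$ needed to express the relevant increments; I would arrange the numbering so that there is an index $p$ with $A'_i \subseteq U$ exactly when $i < p$, placing $U$ itself at position $p-1$. Writing $C_i = A'_i \setminus (\cup_{j=0}^{i-1} A'_j)$ for the left-neighbourhoods, the key observation (already used in the proofs of Proposition~\ref{prop:sharp_markov} and Theorem~\ref{th:c_markov_CI}) is that for $i \geq p$ one has $C_i \cap U = \vset$, hence $\Fi_U \subseteq \Gc{C_i}$, so the \Ci-Markov property can be applied successively to integrate out the variables $x_{A'_p}, \dotsc, x_{A'_m}$ against the transition kernels $\Pc{C_p}, \dotsc, \Pc{C_m}$. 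This yields
\[
  \espcb[_\nu]{f(X \circ \theta_U)}{\Fi_U} = \int \Pc{C_p}(\vXCp{C_p}; \dt x_{A'_p}) \dotsb \Pc{C_m}(\vXCp{C_m}; \dt x_{A'_m}) \, \prod_{i=1}^k g_i(x_{\theta_U(A_i)}),
\]
where the remaining random inputs in the vectors $\vXCp{C_i}$ are the values $X_{A'_j}$ with $A'_j \subseteq U$, all of which are $\Fi_U$-measurable.

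\textbf{The main obstacle} is to recognize that this integral is precisely $\espb[_{X_U}]{f(X)}$, and this is where homogeneity enters. The sets $A'_p, \dotsc, A'_m$ all contain $U$ (or meet its complement), and I would argue that shifting by $U$ maps a semilattice based at $\zset$ onto exactly the collection $\brc{A'_i : i \geq p-1}$ based at $U = \theta_U(\zset)$, with the increments $C_i$ corresponding under $\theta_U$ to the left-neighbourhoods of that shifted semilattice. The homogeneity assumption $\Pc{C} = \Pc{\theta_U(C)}$ then identifies each kernel $\Pc{C_i}$ appearing above with the kernel governing the unshifted process started from the value $X_U$ at the base point. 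Concretely, comparing with the formula for the finite-dimensional law from Theorem~\ref{th:c_markov_charac} applied under $\pr[_{X_U}]{\cdot}$, the integral collapses to $\esp[_{X_U}]{\prod_i g_i(X_{A_i})} = \espb[_{X_U}]{f(X)}$, using the measurability of $x \mapsto \esp[_x]{Z}$ noted after Theorem~\ref{th:c_markov_construct}. The delicate bookkeeping is verifying that the shift respects the consistent numbering and the extremal representations so that the frontier sets $\Cp{C_i}$ transform correctly under $\theta_U$; once that correspondence is established, the two integrals are term-by-term identical and the monotone class argument closes the proof.
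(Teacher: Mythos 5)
Your proposal is correct and follows essentially the same route as the paper: reduce to cylinder functions by a monotone class argument, apply the \Ci-Markov property successively to the shifted increments (using that $\theta_U(C_i)\cap U=\vset$, hence $\Fi_U\subseteq\Gc{\theta_U(C_i)}$), invoke homogeneity to replace $\Pc{\theta_U(C_i)}$ by $\Pc{C_i}$, and identify the resulting integral with $\esp[_{X_U}]{f(X)}$ via the finite-dimensional formula. The only cosmetic difference is that the paper shifts the semilattice $\brc{\zset,A_1,\dotsc,A_k}$ directly by $\theta_U$ (so the correspondence of left-neighbourhoods and frontiers is immediate from the stability of $\theta_U$ under intersections), whereas you build an ambient semilattice containing $U$ and the shifted sets and integrate out the variables above $U$; both amount to the same computation.
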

\begin{proof}
  Let $A_1,\dotsc,A_k$ be in \Ai. Without any loss of generality, we can suppose that $\Ai'=\brc{A_0=\zset,A_1,\dotsc,A_k}$ is a semilattice with consistent numbering. As usually, $C_1,\dotsc,C_k\in\Ci$ denote the left-neighbourhoods $C_i=A_i\setminus(\cup_{j=0}^{i-1}A_j)$. Then, for any measurable function $h:E^k\rightarrow\R_+$ and any $U\in\Ai$,
  \begin{align*}
    &\espcb[_\nu]{h(X_{\theta_U(A_1)},\dotsc,X_{\theta_U(A_k)})}{\Fi_U} \\
    &= \espcb[_\nu]{ \espc[_\nu]{ h(X_{\theta_U(A_1)},\dotsc,X_{\theta_U(A_k)}) }{\Gc{\theta_U(C_k)}} }{\Fi_U} \\
    &= \espcB[_\nu]{\int_\R \Pc{\theta_U(C_k)}(\vXCp{\theta_U(C_k)};\dt x_{\theta_U(A_k)})\,h(X_{\theta_U(A_1)},\dotsc,X_{\theta_U(A_{k-1})},x_{\theta_U(A_k)})}{\Fi_U},
  \end{align*}
  since for every $i\in\brc{1,\dotsc,k}$, $\theta_U(C_i) \cap U = \vset$, and thus $\Fi_U\subseteq \Gc{\theta_U(C_k)}$. By induction, 
  \begin{align*}
    &\espcb[_\nu]{h(X_{\theta_U(A_1)},\dotsc,X_{\theta_U(A_k)})}{\Fi_U} \\
    &= \int_{\R^k} \Pc{\theta_U(C_1)}(X_U;\dt x_{\theta_U(A_1)})\dotsc \Pc{\theta_U(C_k)}(\vxCp{\theta_U(C_k)};\dt x_{\theta_U(A_k)})\, h(x_{\theta_U(A_1)},\dotsc,x_{\theta_U(A_k)}) \\
    &= \int_{\R^k} \Pc{C_1}(X_U;\dt x_{A_1})\dotsc \Pc{C_k}(\vxCp{C_k};\dt x_{A_k})\, h(x_{A_1},\dotsc,x_{A_k}) \\
    &= \esp[_{X_U}]{h(X_{A_1},\dotsc,X_{A_k})},
  \end{align*}
  as \Pii is homogeneous and $\theta_U(C_1) = \theta_U(A_1)\setminus\theta_U(\zset) = \theta_U(A_1)\setminus U$.
  A monotone class argument allows to conclude the proof.
\end{proof}

\subsection{\texorpdfstring{\Ci-Feller processes}{C-Feller processes}} \label{ssec:si_feller}

The Feller property plays a major role in the classic theory of Markov processes, motivating its introduction into the \Ci-Markov formalism. For this purpose, we need to reinforce the assumptions on the indexing collection \Ai, so that a pseudo-metric can be defined on the latter and on the class of increments \Ci.
\begin{definition}[\textbf{Separability from above, see \cite{Ivanoff.Merzbach(2000)}}]
  There exist an increasing sequence of finite subclasses $\Ai_n=\brc{A_1^n,\dotsc,A_{k_n}^n}$ of \Ai closed under intersections and such that $B_n\in\Ai_n$; a sequence $(\eps_n)_{n\in\N}$ decreasing to $0$ and a sequence of functions $g_n:\Ai\rightarrow \Ai_n\cup\brc{\Ti}$ such that
  
  \begin{enumerate}[\it (i)]
    \item for any $A\in\Ai,\ A = \cap_n g_n(A)$;
    \item $g_n$ preserves arbitrary intersections and finite unions;
    \item for any $A\in\Ai,\ A\subseteq(g_n(A))^\circ$;
    \item $g_n(A)\subseteq g_m(A)$ if $n \geq m$;
    \item for any $A\in\Ai$, $d_H(A,g_n(A)) \leq \eps_n$ for all $n\in\N$ such that $g_n(A)\subseteq B_n$,
  \end{enumerate}
  where $d_H$ denotes the Hausdorff distance between two sets.
\end{definition}
The family $(g_n)_{n\in\N}$ plays a role similar to the dyadic rational numbers in $\R^N$.
Note that the previous definition is slightly stronger than the usual one presented by \citet{Ivanoff.Merzbach(2000)}, since every $g_n$ is supposed to be $\Ai_n$-valued, and not $\Ai_n(u)$.
For any filtration $\pth{\Fi_A}_{A\in\Ai}$, the collection $(g_n)_{n\in\N}$ induces the definition of the augmented filtration $\pth{\widetilde\Fi_A}_{A\in\Ai}$:
\[
  \forall A\in\Ai; \quad \widetilde\Fi_A \eqdef \bigcap_{n\in\N} \Fi_{g_n(A)}.
\]
The filtrations $(\widetilde\Fi_B)_{B\in\Aiu}$ and $(\GcA{C})_{C\in\Ci}$, given by Equation \eqref{eq:def_filtrations}, denote the extensions of $\widetilde\Fi$ on \Aiu and \Ci, respectively. As proved by \citet{Ivanoff.Merzbach(2000)}, the augmented filtration is monotone outer-continuous, i.e. $\widetilde\Fi_B = \cap_{n\in\N} \widetilde\Fi_{B_n}$ for any decreasing sequence $(B_n)_{n\in\N}$ in $\Aiu$ such that $B=\cap_{n\in\N} B_n$.  

In the classic theory of Markov processes, a Feller semigroup $P$ must satisfy the condition $P_t f \rightarrow_{t\rightarrow 0} f$. Therefore, to adapt such a property to the \Ci-Markov formalism, one must define a pseudo-norm $\norm{\,\cdot\,}_{\Ci}$ on the class of increments $\Ci$.
For this purpose, let us introduce a few more notations related to \Ci. For any $C = A\setminus B\in\Ci$, we assume that the numbering $B = \cup_{i=1}^k A_i$ is such that $d_H(A,A_1) \leq \dotsc \leq d_H(A,A_k)$. Then, we define a map $\psi_C:\brc{\Cpi{C}{0},\Cpi{C}{1},\dotsc,\Cpi{C}{p}}\rightarrow\brc{\Cpi{C}{1},\dotsc,\Cpi{C}{p}}$ constructed as follows (we recall that the definition of $(\Cpi{C}{i})_{i\leq p}$ is given in the introduction):
\begin{enumerate}[ \it 1.]
  \item $\psi_C(\Cpi{C}{0}) = A_1$, where by convention $\Cpi{C}{0} = A$;
  \item $\psi_C(\Cpi{C}{1}) = A_1$, where it is assumed that $\Cpi{C}{1} = A_1$;
  \item let $i\in\brc{2,\dotsc,p}$ and $V = A_1\cap\Cpi{C}{i}$.
  Owing to Equation \eqref{eq:inc_exc_formula}, we observe that $X_{A_1} = \sum_{i=1}^{p} (-1)^{\eCpi{i}} X_{A_1\cap\Cpi{C}{i}}$, which implies the existence of $j > i$ such that $V = A_1\cap\Cpi{C}{j}$. Consequently, we set 
  \begin{equation} \label{eq:psi_C}
    \psi_C(\Cpi{C}{i}) = \Cpi{C}{j}\quad\text{and}\quad \psi_C(\Cpi{C}{j}) = \Cpi{C}{j}.
  \end{equation}
  By induction, we obtain a complete definition of $\psi_C$.
\end{enumerate}
We note that the construction of $\psi_C$ implies that $\Delta X$ satisfies
\begin{equation} \label{eq:psi_C_delta}
  \Delta X_C = \pthb{ X_A - X_{\psi_C(A)} } - \bktbb{ \sum_{i=1}^{p} (-1)^{\eCpi{i}} \pthb{ X_{\Cpi{C}{i}} - X_{\psi_C(\Cpi{C}{i})} } }.
\end{equation}
The map $\psi_C$ leads to the definition of the pseudo-norms $\norm{C}_\Ci$ and $\norm{\vxCp{C}}_\Ci$:
\begin{equation} \label{eq:norm_C}
  \norm{C}_\Ci \eqdef \max_{0\leq i \leq p} d_H\pthb{ \Cpi{C}{i}, \psi_C(\Cpi{C}{i}) } \quad\text{and}\quad \norm{\vxCp{C}}_\Ci \eqdef \max_{0\leq i \leq p} d_E\pthb{ x_{\Cpi{C}{i}} , x_{\psi_C(\Cpi{C}{i})}  }.
\end{equation}
Finally, for any $m\in\N$, let $\Ci^{m}$ denotes the sub-class $\brc{C\in\Ci : \abs{\Cp{C}} \leq m \text{ and } C\subseteq B_m }$.\vspar

\begin{definition}[\textbf{\Ci-Feller transition system}] \label{def:c_markov_feller}
  A \Ci-transition system \Pii is said to be \emph{Feller} if it satisfies the following conditions:
  \begin{enumerate}[ \it 1.]
    \item for any $f\in C_0(E)$ and for all $C\in\Ci$, 
    \[
      \Pc{C}f\in C_0\pthb{ E^{\abs{\Cp{C}}} },
    \]
    where $C_0(E^k)$, $k\in\N$, denotes the usual space of continuous functions that vanish at infinity;
    \item for any $f\in C_0(E)$ and for every $m\in\N$,
    \begin{equation} \label{eq:feller_property}
      \lim_{\rho\rightarrow 0} \ \sup_{\stackrel{C = C'\cup C''\in\Ci^{m}}{\norm{C'}_\Ci \leq \rho}} \ \sup_{\stackrel{ \vxCp{C''},\vxCp{C'} }{ \norm{ \vxCp{C'} }_\Ci \leq\rho } } \absb{  \Pc{C} f(\vxCp{C}) - P_{C''}f(\vxCp{C''}) } = 0.
    \end{equation}
    where, similarly to Definition \ref{def:c_markov_tr}, the sets $C$, $C'$ and $C''$ are such that $C'=C\cap A'$ and $C''=C\setminus A'$, with $A'\in\Ai$.
  \end{enumerate}
  A \Ci-Markov process $X$ is said to be \emph{\Ci-Feller} if its \Ci-transition system \Pii is Feller.
\end{definition}
The definition of a \Ci-Feller transition system, and particularly Equation \eqref{eq:feller_property}, may seem a bit cumbersome, but in the one-dimensional case and if \Pii is homogeneous, it is equivalent to the usual Feller property $\lim_{t\rightarrow 0} \, \sup_{s\geq 0} \, \norm{ P_{s+t}f - P_{s}f }_\infty = 0$.
  
Moreover, in the case of simple increments $C' = U\setminus V$ and $C''=\vset$, it corresponds to a more common formula:
\begin{equation} \label{eq:feller_property_weak}
  \lim_{\rho\rightarrow 0} \ \sup_{\overset{U,V\in\Ai,\, U\subseteq V\subset B_m}{d_H(U,U\cap V)\leq\rho} } \normb{\Pc{U\setminus V}f - f}_\infty = 0,
\end{equation}
since $\norm{U\setminus V}_\Ci = d_H(U,U\cap V)$ and $\norm{\vxCp{U\setminus V}}_\Ci = d_E(x_U,x_{U\cap V})$.

The existence of interesting \Ci-Feller processes which are not homogeneous (e.g. the well-known multiparameter Brownian sheet) motivates the separation between the homogeneous and Feller hypotheses, at the cost of a more complex definition of the latter (Equation \eqref{eq:feller_property}).

Finally, in this section are considered \Ci-Markov processes $X$ which have outer-continuous sample paths, meaning that:
\begin{equation} \label{eq:si_cad}
  \forall A\in\Ai,\ \forall \eps>0,\ \exists \alpha>0;\quad  A\subseteq U\in\Ai\ \text{ and }\ d_H(A,U)\leq\alpha\ \Longrightarrow\ d_E\pth{X_A,X_U} \leq \eps.
\end{equation}

We can now extend a classic result of the theory of Markov processes.
\begin{theorem}[\textbf{\Ci-Markov w.r.t the augmented filtration}] \label{th:c_markov_aug}
  Let $(\Omega,\Fi,(\Fi_A)_{A\in\Ai},\Pr)$ be a complete probability space and $X$ be a \Ci-Feller process w.r.t $(\Fi_A)_{A\in\Ai}$. In addition, suppose $X$ has outer-continuous sample paths. 
  
  Then, $X$ is a \Ci-Markov process with respect to the augmented filtration $(\widetilde\Fi_A)_{A\in\Ai}$, i.e.
  \[
    \espc{f(X_A)}{\GcA{C}} = \espc{f(X_A)}{\vXCp{C}} \quad\text{\Pr-a.s.}
  \]
  for all $C=A\setminus B\in\Ci$ and any measurable function $f:E\mapsto\R_+$.
\end{theorem}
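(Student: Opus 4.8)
The plan is to upgrade the \Ci-Markov property from the strong history $(\Gc{C})$ to the larger augmented history $(\GcA{C})$, by approximating the increment $C=A\setminus B$ from outside using the discretizing maps $g_n$ and exploiting both the Feller property and the outer-continuity of the sample paths. The essential point is that, since $\GcA{C}=\bigvee_{B'\cap C=\vset}\widetilde\Fi_{B'}$ is built from augmented $\sigma$-fields $\widetilde\Fi_{B'}=\cap_n\Fi_{g_n(B')}$, it is only slightly larger than the non-augmented $\Gc{C}$, the extra information being captured by a limit over the approximating sequence. We already know from Definition \ref{def:c_markov} that $X$ is \Ci-Markov with respect to $(\Fi_A)_{A\in\Ai}$, hence the identity $\espc{f(X_A)}{\Gc{C}}=\Pc{C}(\vXCp{C};\cdot)$ holds; the task is to show the conditional expectation does not increase when we enlarge the conditioning $\sigma$-field to $\GcA{C}$.

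\medskip
\noindent First I would reduce to $f\in C_0(E)$, since such functions are measure-determining and the general bounded-measurable case follows by a monotone class argument. For fixed $C=A\setminus B$, I would build a decreasing sequence of increments $C_n\downarrow C$ by replacing $A$ and each $A_i$ in the extremal representation of $B$ with their outer approximations $g_n(A)$ and $g_n(A_i)$; property (iii) of separability from above, $A\subseteq(g_n(A))^\circ$, guarantees that these enlarged sets strictly contain $A$, so the corresponding strong histories $\Gc{C_n}$ increase to something comparable to $\GcA{C}$. The key structural fact to establish is that $\GcA{C}=\bigvee_n \Gc{C_n}$ (up to negligible sets), using the monotone outer-continuity $\widetilde\Fi_B=\cap_n\widetilde\Fi_{B_n}$ proved by Ivanoff--Merzbach and recalled in the excerpt. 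Then the martingale convergence theorem gives
\[
  \espc{f(X_A)}{\GcA{C}} = \lim_{n\to\infty}\espc{f(X_A)}{\Gc{C_n}}\quad\text{\Pr-a.s. and in }L^1,
\]
reducing the problem to identifying this limit.

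\medskip
\noindent Next, for each $n$ the ordinary \Ci-Markov property (applied to $C_n$, which has its own frontier $\Cp{C_n}$) yields $\espc{f(X_A)}{\Gc{C_n}}=\Pc{C_n}f(\vXCp{C_n})$ — except that the relevant increment here is $C_n'=g_n(A)\setminus\cdots$ rather than $C$ itself, so I must combine the Chapman--Kolmogorov equation \eqref{eq:chapman_kolmogorov} with homogeneity-free Feller estimate \eqref{eq:feller_property} to pass from $\Pc{C_n}$ back to $\Pc{C}$. This is where the two hypotheses enter decisively: outer-continuity of the paths (Equation \eqref{eq:si_cad}) forces $X_{g_n(A)}\to X_A$ and $X_{\Cpi{C_n}{i}}\to X_{\Cpi{C}{i}}$ \Pr-almost surely, so that $\norm{\vxCp{C'}}_\Ci\to 0$ along the approximating sequence, while the Feller condition \eqref{eq:feller_property} guarantees $\Pc{C_n}f(\vXCp{C_n})\to\Pc{C}f(\vXCp{C})$ uniformly in the frontier values. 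Combining these two convergences identifies the limit as $\Pc{C}f(\vXCp{C})=\espc{f(X_A)}{\vXCp{C}}$, which is the desired conclusion.

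\medskip
\noindent The main obstacle I anticipate is the bookkeeping that links the frontier $\Cp{C_n}$ of the approximating increment to the frontier $\Cp{C}$ of $C$, and ensuring that the vector $\vXCp{C_n}$ converges componentwise to $\vXCp{C}$ with the correct matching of indices; the map $\psi_C$ and the pseudo-norm $\norm{\cdot}_\Ci$ defined in \eqref{eq:norm_C} are designed precisely to control this, but verifying that $\norm{C_n'}_\Ci\to 0$ (the increment "thickness" shrinks) and simultaneously that $\norm{\vxCp{C'}}_\Ci\to 0$ requires careful use of property (v), $d_H(A,g_n(A))\le\eps_n\to 0$, together with outer-continuity. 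Once the two limits are shown to interact correctly inside the supremum of \eqref{eq:feller_property}, the identification of the limit is immediate and the proof closes by the monotone class extension back to arbitrary bounded measurable $f$.
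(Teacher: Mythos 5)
Your overall strategy---approximate the increment using the maps $g_n$, control the error with the Feller property and outer-continuity, and finish with the tower property---is the right one and is the one the paper follows. But two specific steps in your construction would fail as written.

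First, you enlarge $A$ itself to $g_n(A)$, forming $C_n=g_n(A)\setminus\bigcup_i g_n(A_i)$. This creates two problems. The resulting sets are not a decreasing sequence containing $C$ (already in one dimension, with $C=\ivof{s,t}$ and $C_n=\ivof{s_n,t_n}$, $s_n\downarrow s$, $t_n\downarrow t$, the $C_n$ are not nested), and the corresponding strong histories do not \emph{increase} to $\GcA{C}$: the augmented history is reached as a \emph{decreasing} intersection ($\widetilde\Fi_s=\bigcap_n\Fi_{s_n}$ in the one-parameter case), so your identity $\GcA{C}=\bigvee_n\Gc{C_n}$ and the appeal to upward martingale convergence are backwards. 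More seriously, the \Ci-Markov property applied to $C_n$ governs $X_{g_n(A)}$, not $X_A$, and the Feller estimate \eqref{eq:feller_property} only compares $\Pc{C}$ with $\Pc{C''}$ when $C=C'\cup C''$ with $C''=C\setminus A'$ --- i.e.\ when the two increments share the same tip set $A$ --- so it cannot be invoked to compare $\Pc{C_n}$ and $\Pc{C}$, whose tips $g_n(A)$ and $A$ differ. The paper's construction keeps $A$ fixed and only enlarges the frontier sets: it sets $C^n_{i}=A\setminus\pthb{B\cup\bigcup_{j<i}g_n(\Cpi{C}{j})}$ and $D^n_i=C^n_i\cap g_n(\Cpi{C}{i})$, so that each pair $(C^n_i,C^n_{i+1})$ is exactly in the configuration required by \eqref{eq:feller_property}; a telescoping sum over the $p=\abs{\Cp{C}}$ frontier sets, with outer-continuity giving $\norm{\vXCp{D^n_i}}_\Ci\le\rho$, yields $\Pc{C^n_{p+1}}f(\vXCp{C^n_{p+1}})\to\Pc{C}f(\vXCp{C})$ almost surely.

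Second, no martingale convergence is needed: since $C^n_{p+1}\subseteq C$ is separated from $B$ by the enlargements, one has $\GcA{C}\subseteq\Gc{C^n_{p+1}}$ for every fixed $n$, so the tower property gives $\espc{f(X_A)}{\GcA{C}}=\espcb{\Pc{C^n_{p+1}}f(\vXCp{C^n_{p+1}})}{\GcA{C}}$, and dominated convergence together with the $\GcA{C}$-measurability of $\vXCp{C}$ concludes. You would need to replace your $\sigma$-field limit argument by this simpler inclusion (or by a downward martingale theorem) and redo the approximation so that $A$ is never enlarged.
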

\begin{proof}
  Let $C=A\setminus B$ be in \Ci, with $B=\cup_{j=1}^k A_j$. We assume that the numbering of $\Cp{C} = \brc{\Cpi{C}{1},\dotsc,\Cpi{C}{p}}$ is consistent. Let $\rho_0 = \max_{1\leq i<j\leq p} d_H(\Cpi{C}{i},\Cpi{C}{j}) > 0$. Then, for all $n\in\N$ and for every $i\in\brc{1,\dotsc,p+1}$, let $C^n_i$ and $D^n_i$ denote the following increments,
  \[
    C^n_i = A\setminus \pthbb{ B\cup\bigcup_{j=1}^{i-1} g_n(\Cpi{C}{j}) } \quad\text{and}\quad D^n_i = g_n(\Cpi{C}{i}) \setminus \pthbb{ B\cup\bigcup_{j=1}^{i-1} g_n(\Cpi{C}{j}) } ,
  \]
  where by convention $g_n(\Cpi{C}{p+1}) = A$. For every $i\in\brc{1,\dotsc,p}$, let $D^n_i = g_n(\Cpi{C}{i}) \setminus\pthb{ \cup_{j=1}^{k_i}  A^i_j }$ be the extremal representation of $D^n_i$. As previously said, it is assumed that the numbering is such that $d_H(g_n(\Cpi{C}{i}),A^i_1)\leq\dotsb\leq d_H(g_n(\Cpi{C}{i}),A^i_{k_i})$. For every $j$, due to the definition of $D^n_i$, there exist $l_j$ such that 
  \[
    A^i_j = g_n(\Cpi{C}{i}) \cap \Cpi{C}{l_j} \quad\text{or}\quad A^i_j = g_n(\Cpi{C}{i}) \cap g_n(\Cpi{C}{l_j}) = g_n(\Cpi{C}{i} \cap \Cpi{C}{l_j}).
  \]
  Since $\Cpi{C}{i}\subseteq B$, there exists $j\leq k_i$ such that $\Cpi{C}{i}\subseteq A^i_j \subset g_n(\Cpi{C}{i})$. Therefore, as the family $(g_n)_n$ satisfies assumption \textit{(v)}, we have $d_H(g_n(\Cpi{C}{i}),A^i_j) \rightarrow 0$ and $A^i_j$ converges to $\Cpi{C}{i}$.
  
  If we consider the particular case of $A^i_1$, we obtain that $d_H(g_n(\Cpi{C}{i}),A^i_1) \rightarrow 0$ and $A^i_1$ converges to an element $V \eqdef \Cpi{C}{i} \cap \Cpi{C}{l_1}\in\Cp{C}$. Let suppose $V\neq\Cpi{C}{i}$, implying that $d_H(V,\Cpi{C}{i}) \geq \rho_0$. We know that $d_H(A^i_1,V) \rightarrow 0$, $d_H(g_n(\Cpi{C}{i}),A^i_1) \rightarrow 0$ and $d_H(g_n(\Cpi{C}{i}),\Cpi{C}{i}) \rightarrow 0$, which is in contradiction with the previous assumption. Therefore, $\Cpi{C}{i} \cap \Cpi{C}{l_1} = \Cpi{C}{i}$.
  
  Then, let $V_n\in\Cp{D^n_i}$. According to the definition of $D^n_i$, $V_n$ must have the following general form
  \[
    V_n = \Cpi{C}{j_V}\cap g_n(\Cpi{C}{l_V}) \cap g_n(\Cpi{C}{i})\quad\text{where }j_V,l_V\in\brc{1,\dotsc,p}.
  \]
  Therefore, $(V_n)_{n\in\N}$ is a decreasing sequence which converges to $\Cpi{C}{j_V}\cap \Cpi{C}{l_V} \cap \Cpi{C}{i} \eqdef \Cpi{C}{k_V}$. Since $A^i_1$ converges to $\Cpi{C}{i}$, $V_n\cap A^i_1$ is a decreasing sequence whose limit is also $\Cpi{C}{k_V}$. Hence, according to definition \eqref{eq:psi_C} of $\psi_{D^n_i}$, we know that both $V_n$ and $\psi_{D^n_i}(V_n)$ converge to $\Cpi{C}{k_V}$, and as the family $(g_n)_n$ satisfies the assumption \textit{(v)}, for any $\rho>0$, there exists $N_\rho$ such that
  \[
    \forall n\geq N_\rho;\quad d_H\pthB{ V_n, \psi_{D^n_i}(V_n) } \leq \rho.
  \]
  Furthermore, since the sample paths of $X$ are outer-continuous, $N_\rho$ can be suppose large enough to satisfy
  \[
    \forall n\geq N_\rho;\quad d_E\pthB{ X_{V_n},X_{\Cpi{C}{k_V}} } \leq\rho \quad\text{and}\quad d_E\pthB{ X_{\psi_{ D^n_i}(V_n)} , X_{\Cpi{C}{k_v}} } \leq\rho 
  \]
  Finally, as the previous two properties are satisfied uniformly for any $V_n\in\Cp{D^n_i}$ and $i\in\brc{1,\dotsc,p}$, 
  \begin{equation} \label{eq:proof_feller}
    \forall n\geq N_\rho;\quad \normb{ D^n_i }_\Ci \leq \rho \quad\text{and}\quad \normb{ \vXCp{D^n_i} }_\Ci \leq \rho.
  \end{equation}
  Let $\eps>0$ and $f\in C_0(E)$. According to Definition \ref{def:c_markov_feller}, there exists $\rho>0$ such that
  \[
    \sup_{\stackrel{C = C'\cup C''\in\Ci^{m}}{\norm{C'}_\Ci \leq \rho}} \ \sup_{\stackrel{ \vxCp{C''},\vxCp{C'} }{ \norm{ \vxCp{C'} }_\Ci \leq\rho } } \absb{  \Pc{C} f(\vxCp{C}) - P_{C''}f(\vxCp{C''}) } \leq \eps.
  \]
  Note that for every $i\in\brc{1,\dotsc,p}$, $C^n_{i+1} = C^n_i \setminus g_n(\Cpi{C}{i})$ and $D^n_i = C^n_i \cap g_n(\Cpi{C}{i})$, and there exists $m\in\N$ such that $C^n_i$ and $D^n_i$ belong to $\Ci^m$. 
  
  Therefore, owing to the Feller property and Inequalities \eqref{eq:proof_feller},
  \[
    \forall i\in\brc{1,\dotsc,p};\quad \absb{  \Pc{C^n_i} f(\vXCp{C^n_i}) - P_{C^n_{i+1}}f(\vXCp{C^n_{i+1}}) } \leq \eps.
  \]
  Thereby, by induction and since $C^n_1 = C$, we have $\absb{  \Pc{C} f(\vXCp{C}) - P_{C^n_{p+1}}f(\vXCp{C^n_{p+1}}) } \leq p\eps$, for all $n\geq N_\rho$, which proves that $P_{C^n_{p+1}}f(\vXCp{C^n_{p+1}}) \xrightarrow[n\rightarrow\infty]{} \Pc{C} f(\vXCp{C})$ almost surely.
  
  As we observe that for all $n\in\N$, $\GcA{C}\subset\Gc{C^n_{p+1}}$, we can eventually prove the \Ci-Markov property with respect to the augmented filtration:
  \begin{align*}
    \espc{f(X_A)}{\GcA{C}}
    = \espcb{ \espc{f(X_A)}{\Gc{C^n_{k+1}}} }{\GcA{C}}
    = \espcb{ \Pc{C^n_{p+1}} f(\vXCp{C^n_{p+1}}) }{\GcA{C}}.
  \end{align*}
  By the dominated convergence theorem,
  \begin{align*}
    \espc{f(X_A)}{\widetilde\Gi^*_C}
    = \lim_{n\rightarrow\infty} \espcb{ \Pc{C^n_{p+1}} f(\vXCp{C^n_{p+1}}) }{\GcA{C}} 
    = \espc{ \Pc{C} f(\vXCp{C})}{\GcA{C}} 
    = \Pc{C} f(\vXCp{C}).
  \end{align*}
  since $\vXCp{C}$ is $\GcA{C}$-measurable.
\end{proof}

Before establishing a \emph{strong \Ci-Markov property}, we recall the definition of a \emph{simple stopping set}, as presented by \citet{Ivanoff.Merzbach(2000)}.
A \emph{simple stopping set} is an random variable $\xi:\Omega\rightarrow\Ai$ such that for all $A\in\Ai$, $\brc{\omega : A\subseteq\xi(\omega)}\in\Fi_A$. $\xi$ is said to be bounded if there exists $V\in\Ai$ such that $\xi\subseteq V$ almost surely. As the assumption $\emph{Shape}$ holds on the collection \Ai, we can define the $\sigma$-algebra of the events prior to $\xi$:
\[
  \widetilde\Fi_\xi = \brcb{ F\in\Fi :F\cap\brc{\xi\subseteq B}\in\widetilde\Fi_B, \ \forall B\in\Aiu }.
\]
According to \cite{Ivanoff.Merzbach(2000)}, if $X$ has outer-continuous sample paths, then $X_\xi$ is $\widetilde\Fi_\xi$-measurable.

\begin{theorem}[\textbf{Strong \Ci-Markov property}] \label{th:c_markov_strong}
  Let $(\Omega,\Fi,(\Fi_A)_{A\in\Ai},\Pr)$ be a complete probability space, $X$ be a $\Ci$-Feller process w.r.t. $(\Fi_A)_{A\in\Ai}$ and $\xi$ be a bounded simple stopping set. Moreover, suppose $X$ has outer-continuous sample paths and is homogeneous w.r.t. $(\theta_U)_{U\in\Ai}$.
  
  Then, $X$ satisfies a \emph{strong \Ci-Markov property}:
  \[
    \espcb[_\nu]{f\pth{X\circ\theta_\xi}}{\widetilde\Fi_\xi} = \espb[_{X_\xi}]{f(X)}\quad \Pr_\nu\text{-a.s.}
  \]
  for any initial measure $\nu$ and any measurable function $f:E^\Ai\rightarrow\R_+$.
\end{theorem}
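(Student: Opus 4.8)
The plan is to imitate the classical proof of the strong Markov property for Feller processes: approximate the stopping set $\xi$ from above by the discrete stopping sets $g_n(\xi)$ furnished by the separability from above structure, apply the (augmented) simple Markov property on each of their finitely many values, and then pass to the limit by combining the Feller property with the outer-continuity of both the sample paths and the augmented filtration.

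First I would reduce to a convenient class of test functions. By a functional monotone class argument it suffices to prove the identity for cylindrical functions $f(X) = \prod_{i=1}^k f_i(X_{A_i})$ with $A_i\in\Ai$ and $f_i\in C_0(E)$, since both sides are linear and monotone in $f$ and such functions generate the product $\sigma$-field on $E^\Ai$. For such an $f$, the characterization Theorem \ref{th:c_markov_charac} expresses $\espb[_x]{f(X)}$ as an iterated integral of the kernels $\Pc{C_i}$ issued from $X_\zset=x$; iterating part~1 of Definition \ref{def:c_markov_feller} along a semilattice containing the $A_i$ then shows that $x\mapsto\espb[_x]{f(X)}$ lies in $C_0(E)$, in particular is continuous. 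This continuity is the key regularity I will exploit in the limiting step.

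Next I would record a version of the simple Markov property relative to the augmented filtration. Since $X$ is \Ci-Feller with outer-continuous paths, Theorem \ref{th:c_markov_aug} makes it \Ci-Markov with respect to $(\widetilde\Fi_A)_{A\in\Ai}$; re-running the computation of Theorem \ref{th:c_markov_simple} with the augmented strong history $(\GcA{C})_{C\in\Ci}$ in place of $(\Gc{C})_{C\in\Ci}$, using $\widetilde\Fi_U\subseteq\GcA{\theta_U(C_k)}$ and homogeneity, yields
\[
  \espcb[_\nu]{f(X\circ\theta_U)}{\widetilde\Fi_U} = \espb[_{X_U}]{f(X)}, \qquad U\in\Ai.
\]
I would then discretize by setting $\xi_n\eqdef g_n(\xi)$. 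Monotonicity and lattice-preservation of $g_n$ make each $\xi_n$ a simple stopping set taking finitely many values in $\Ai_n$, with $\xi\subseteq\xi_n^\circ$, $\xi=\cap_n\xi_n$, and, by outer-continuity of the augmented filtration, $\widetilde\Fi_\xi=\cap_n\widetilde\Fi_{\xi_n}$. Decomposing over the events $\brc{\xi_n=A}$, $A\in\Ai_n$, on each of which $F\in\widetilde\Fi_{\xi_n}$ satisfies $F\cap\brc{\xi_n=A}\in\widetilde\Fi_A$, the augmented simple Markov property gives the strong Markov identity for the discrete stopping sets,
\[
  \espcb[_\nu]{f(X\circ\theta_{\xi_n})}{\widetilde\Fi_{\xi_n}} = \espb[_{X_{\xi_n}}]{f(X)}.
\]

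Finally I would pass to the limit. Fixing $F\in\widetilde\Fi_\xi\subseteq\widetilde\Fi_{\xi_n}$ and testing against $\indi_F$ gives $\espb[_\nu]{f(X\circ\theta_{\xi_n})\indi_F}=\espb[_\nu]{\espb[_{X_{\xi_n}}]{f(X)}\indi_F}$. Because $\xi_n\downarrow\xi$ from above, the outer-continuity of $U\mapsto\theta_U(A_i)$ gives $\theta_{\xi_n}(A_i)\downarrow\theta_\xi(A_i)$ with vanishing Hausdorff distance, so the outer-continuity \eqref{eq:si_cad} of the paths and continuity of the $f_i$ yield $f(X\circ\theta_{\xi_n})\to f(X\circ\theta_\xi)$; likewise $X_{\xi_n}\to X_\xi$, whence the Feller-continuity of $x\mapsto\espb[_x]{f(X)}$ gives $\espb[_{X_{\xi_n}}]{f(X)}\to\espb[_{X_\xi}]{f(X)}$. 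Bounded convergence on both sides produces $\espb[_\nu]{f(X\circ\theta_\xi)\indi_F}=\espb[_\nu]{\espb[_{X_\xi}]{f(X)}\indi_F}$ for every $F\in\widetilde\Fi_\xi$, and since $X_\xi$—hence $\espb[_{X_\xi}]{f(X)}$—is $\widetilde\Fi_\xi$-measurable, this is exactly the asserted conditional-expectation identity; a final monotone class argument removes the restriction to cylindrical $f$. I expect the main obstacle to be the discretization step: verifying that $g_n(\xi)$ is genuinely a simple stopping set and, above all, that the single approximation $g_n(\xi)\downarrow\xi$ is simultaneously compatible with the augmented filtration (so that $\widetilde\Fi_\xi=\cap_n\widetilde\Fi_{\xi_n}$) and with the paths and shift operators (so that both limits above hold). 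The Feller property is precisely what controls the right-hand limit, while the two outer-continuity hypotheses are what control the left-hand one.
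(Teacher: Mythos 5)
Your proposal is correct and follows essentially the same route as the paper: approximate $\xi$ from above by the discrete stopping sets $g_n(\xi)$, apply the augmented simple \Ci-Markov property on each of their finitely many values, and pass to the limit using the Feller property together with the outer-continuity of the sample paths and of the shifts, finishing with a monotone class argument. The only (presentational) difference is that you take the limit on all coordinates of a cylindrical function at once via continuity of $x\mapsto\esp[_x]{f(X)}$, whereas the paper peels off one coordinate at a time, using at each stage only that $\Pc{C_k}h\in C_0\pthb{E^{\abs{\Cp{C_k}}}}$ for a single $h\in C_0(E)$ and the convergence of the vector $\vXCp{\theta_{g_n(\xi)}(C_k)}$.
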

\begin{proof}
  Let $F$ be in $\widetilde\Fi_\xi$, $\Ai' = \brc{A_0=\zset,A_1,\dotsc,A_k}$ be a finite semilattice, $C_1,\dotsc,C_k\in\Ci$ be the left-neighbourhoods $C_i=A_i\setminus (\cup_{j=0}^{i-1} A_j)$, $i\in\brc{1,\dotsc,k}$.
  
  According to Lemma $1.5.4$ in \cite{Ivanoff.Merzbach(2000)}, for every $n\in\N$, $g_n(\xi)$ is a discrete simple stopping set such that $\xi\subset g_n(\xi)$, $\xi = \cap_n g_n(\xi)$ and $\widetilde\Fi_\xi\subseteq\widetilde\Fi_{g_n(\xi)}$. Hence, $F\in\widetilde\Fi_{g_n(\xi)}$ and for any $h\in C_0(E)$,
  \[
    \esp[_\nu]{ \indi_F \, h(X_{\theta_\xi(A_k)})} = \lim_{n\rightarrow\infty}  \esp[_\nu]{ \indi_F \, h(X_{\theta_{g_n(\xi)}(A_k)})},
  \]
  since $X$ and $U\mapsto\theta_{U}$ are outer-continuous. Furthermore, $g_n(\xi)$ is a discrete stopping set and $F\cap\brc{g_n(\xi)=U}\in\widetilde\Fi_U\subset\GcA{\theta_U(C_k)}$. Hence, 
  \begin{align*}
    \esp[_\nu]{ \indi_F \, h(X_{\theta_\xi(A_k)})} 
    &= \lim_{n\rightarrow\infty} \sum_{U\in\Ai_n} \espb[_\nu]{ \indi_{F\cap\brc{g_n(\xi)=U}} \, h(X_{\theta_{U}(A_k)})} \\
    &= \lim_{n\rightarrow\infty} \sum_{U\in\Ai_n} \espb[_\nu]{ \indi_{F\cap\brc{g_n(\xi)=U}} \,\espc[_\nu]{h(X_{\theta_{U}(A_k)})}{\GcA{\theta_U(C_k)}} }.
  \end{align*}
  Owing to the \Ci-Markov property and the homogeneity of \Pii,
  \begin{align*}
    \esp[_\nu]{ \indi_F \, h(X_{\theta_\xi(A_k)})} 
    &= \lim_{n\rightarrow\infty} \sum_{U\in\Ai_n} \espb[_\nu]{ \indi_{F\cap\brc{g_n(\xi)=U}} \,  \Pc{\theta_U(C_k)}h(\vXCp{\theta_U(C_k)}) } \\
    &= \lim_{n\rightarrow\infty} \sum_{U\in\Ai_n} \espb[_\nu]{ \indi_{F\cap\brc{g_n(\xi)=U}} \,  \Pc{C_k}h(\vXCp{\theta_U(C_k)}) } \\
    &= \lim_{n\rightarrow\infty} \esp[_\nu]{ \indi_{F} \,  \Pc{C_k}h(\vXCp{\theta_{g_n(\xi)}(C_k)}) }.
  \end{align*}
  Finally, since $\Pc{C_k}h\in C_0(E^{\abs{\Cp{C_k}}})$ and $X$ is outer-continuous, the dominated convergence theorem leads to
  \[
    \esp[_\nu]{ \indi_F \, h(X_{\theta_\xi(A_k)})} = \esp[_\nu]{ \indi_{F} \, \Pc{C_k}h(\vXCp{\theta_{\xi}(C_k)}) }.
  \]
  A simple induction argument extends the equality to any collection $h_1,\dotsc,h_k$ in $C_0(E)$,
  \begin{align*}
    &\esp[_\nu]{ \indi_F \, h_1(X_{\theta_\xi(A_1)})\dotsb h_k(X_{\theta_\xi(A_k)}) } \\
    &= \esp[_\nu]{ \indi_F \, h_1(X_{\theta_\xi(A_1)})\dotsb h_{k-1}(X_{\theta_\xi(A_{k-1})})\, \Pc{C_k}h_k(\vXCp{\theta_{\xi}(C_k)}) } \\
    &= \espbb[_\nu]{\indi_F \int_{\R^k} \Pc{C_1}(X_\xi;\dt x_{A_1})\dotsc \Pc{C_k}(\vxCp{C_k};\dt x_{A_k})\, h_1(x_{A_1})\dotsc h_k(x_{A_k})},
  \end{align*}
  using $\theta_\xi(\zset) = \xi$. This last formula proves that
  \[
    \espb[_\nu]{\indi_F\, h_1(X_{\theta_\xi(A_1)}) \dotsb h_k(X_{\theta_\xi(A_k)})} = \espb[_\nu]{\indi_F\, \esp[_{X_\xi}]{h_1(X_{A_1})\dotsb h_k(X_{A_k})} },
  \]
  which leads to the expected result, by a monotone class argument.
\end{proof}

The outer-continuity of sample paths is an important assumption in Theorems \ref{th:c_markov_aug} and \ref{th:c_markov_strong}. In the general set-indexed framework, the question of the regularity of processes is known to be non-trivial, and far more complex than in the multiparameter setting. For instance, \citet{Adler(1981)} has presented indexing collections on which the set-indexed Brownian motion is not continuous (and even not bounded). There exist several approach in the literature to study this question, including the metric entropy theory \cite{Adler(1981),Adler.Feigin(1984)} or more recently, the set-indexed formalism \cite{Herbin.Richard(2012)}. 

The c\`adl\`aguity of \Ci-Markov processes is studied in Section \ref{sec:mpCMarkov} within the multiparameter setting.

\subsection{Applications and examples} \label{ssec:si_examples}

To conclude this general study of the \Ci-Markov property, we present several examples of \Ci-Markov processes which are natural extensions of classic one- and multi-parameter processes.

\begin{example}[\textbf{Processes with independent increments}] \label{ex:indep_incr}
  A set-indexed process $X$ has independent increments if for any disjoints sets $C_1,\dotsc,C_k\in\Ci$, $\Delta X_{C_1},\dotsc,\Delta X_{C_k}$ are independent random variables. Or equivalently, if for any $C\in\Ci$, the increment $\Delta X_C$ is independent of the $\sigma$-field $\Gc{C}$. 
  
  Let $P_{X_C}$ denote the law of the increment $\Delta X_C$. Then, for all $C=A\setminus B\in\Ci$ and any $\Gamma\in\Ei$,
  \begin{align*}
    \prc{X_A\in\Gamma}{\Gc{C}}
    &= \prc{\Delta X_C\in\Gamma - \Delta X_B}{\Gc{C}} \\
    &= P_{X_C}(\Gamma - \Delta X_B) \eqdef \Pc{C}(\vXCp{C};\Gamma) \quad \text{ owing to Equation \eqref{eq:inc_exc_formula}.}
  \end{align*}
  Therefore, as we might have expected, set-indexed processes with independent increments satisfy the \Ci-Markov property. As proved by \citet{Balan.Ivanoff(2002)}, they are also \emph{set-Markov}.
\end{example}

\begin{example}[\textbf{Set-indexed L\'evy processes}] \label{ex:si_levy}
  Set-indexed L\'evy processes have been defined and studied by \citet{Adler.Feigin(1984),Bass.Pyke(1984)} in the particular case of subsets of $\R^N$, and by \citet{Herbin.Merzbach(2013)} in the general set-indexed formalism. In the latter, a set-indexed L\'evy process $X$ on a measure space $(\Ti,m)$ is characterized as follows:
  \begin{enumerate}[ \it 1)]
    \item $X_{\zset} = 0$ almost surely;
    \item $X$ has independent increments;
    \item $X$ has m-stationary \Cio-increments, i.e. for all $V\in\Ai$ and for all increasing sequences $(U_i)_{i\leq n}$ and $(A_i)_{i\leq n}$ in \Ai such that $m(U_i\setminus V)=m(A_i)$,
    \[
      \pthb{ \Delta X_{U_1\setminus V},\dotsc,\Delta X_{U_n\setminus V} } \eqd \pthb{ \Delta X_{A_1},\dotsc,\Delta X_{A_n} };
    \]
    \item $X$ is continuous in probability.
  \end{enumerate}
  According to \cite{Herbin.Merzbach(2013)}, for any set-indexed L\'evy process $X$, there exists an infinitely divisible measure $\mu$ on $(E,\Ei)$ such that
  for all $C\in\Ci$, $P_{X_C} = \mu^{m(C)}$.
  Since $X$ has independent increments, it is a \Ci-Markov process with the following \Ci-transition system \Pii, 
  \begin{equation} \label{eq:ex_levy_kernel}
    \forall C=A\setminus B\in\Ci,\quad\forall\Gamma\in\Ei;\quad \Pc{C}(\vxCp{C};\Gamma) = \mu^{m(C)}(\Gamma - \Delta x_B).
  \end{equation}
  Conversely, one can easily prove, using the \Ci-Markov property, that a L\'evy process can be constructed from any infinitely divisible measure $\mu$. For this purpose, one simply needs to show that \Pii satisfies the three conditions of Definition \ref{def:c_markov_tr}.
  \begin{enumerate}[ \it 1)]
    \item For any $C\in\Ci$, $\Pc{C}(\vxCp{C};\dt x_A)$ is clearly a transition probability;
    \item As $m(\vset)=0$, we have $\Pc{\vset}(x;\dt y) = \delta_x(\dt y)$;
    \item For any $C = A\setminus B\in\Ci$ and $A'\in\Ai$, let $C' = C\cap A'$ and $C'' = C\setminus A'$ in \Ci. Then, we observe that 
    \[
      \mu^{m(C)} = \mu^{m(C')} \ast \mu^{m(C'')} \quad\text{ and }\quad x_{A'} + \Delta x_{B} = \Delta x_{A'\cup B} + \Delta x_{A'\cap B},
    \]
    using the equality $m(C)=m(C')+m(C'')$ and the inclusion-exclusion principle. These two equations induce the Chapman--Kolmogorov formula \eqref{eq:chapman_kolmogorov}.
  \end{enumerate}
  Note that the construction procedure presented by \citet{Herbin.Merzbach(2013)} is completely equivalent to the proof of Theorem \ref{th:c_markov_construct} (in fact more general in the case of L\'evy processes since the \emph{Shape} assumption is not required on the collection \Ai).
  
  One can also check that the \Ci-transition system is homogeneous w.r.t $(\theta_U)_{U\in\Ai}$ if and only if the measure $m$ is compatible with the family operators, i.e. $\forall U\in\Ai,\ \forall C\in\Ci;\ m(C) = m(\theta_U(C))$.
  
  Furthermore, if the measure $m$ and the Hausdorff metric $d_H$ satisfy the following mild condition,
  \begin{equation}  \label{eq:feller_assumption}
    \forall m\in\N\quad \exists K_m > 0\quad\forall C\in\Ci^m;\quad m(C) \leq K_m \norm{C}_\Ci,
  \end{equation}
  then the \Ci-transition system \Pii of a L\'evy process is Feller. Indeed, suppose $f\in C_0(E)$ and $C = C'\cup C'' \in\Ci$. Then,
  \begin{equation}  \label{eq:feller_proof1}
    \Pc{C}f(\vxCp{C}) = \int_E \mu^{m(C'')}(\dt x_{A}) \brcbb{ \int_E  f(x_A + x_{A'} + \Delta x_B) \,\mu^{m(C')}(\dt x_{A'}) }.
  \end{equation}
  Due to the Feller property of one-parameter L\'evy processes,
  \[
    \lim_{\rho\rightarrow 0}\ \sup_{\overset{C'\in\Ci}{m(C')\leq\rho}} \sup_{y\in E} \ \absbb{ \int_E f(x_{A'} + y)\,\mu^{m(C')}(\dt x_{A'}) - f(y) } = 0.
  \]
  Furthermore, if $\norm{\vxCp{C'}}\leq\rho$, Equation \eqref{eq:psi_C_delta} implies that $\abs{\Delta x_{C'}} \leq m\rho$. Hence, since $\Delta x_{B''} = \Delta x_{C'} + \Delta x_{B}$ and $f\in C_0(E)$,
  \[
    \lim_{\rho\rightarrow 0}\ \sup_{\overset{C'\in\Ci}{m(C')\leq\rho}}\ \sup_{\stackrel{ \vxCp{C''},\vxCp{C'} }{ \norm{ \vxCp{C'} }_\Ci \leq\rho } }\ \absbb{ \int_E f(x_A + x_{A'} + \Delta x_B)\,\mu^{m(C')}(\dt x_{A'}) - f(x_A + \Delta x_{B''}) } = 0.
  \]
  Assumption \eqref{eq:feller_assumption}, Equation \eqref{eq:feller_proof1} and the last equality induce the Feller property:
  \[
    \lim_{\rho\rightarrow 0} \ \sup_{\stackrel{C = C'\cup C''\in\Ci^{m}}{\norm{C'}_\Ci \leq \rho}} \ \sup_{\stackrel{ \vxCp{C''},\vxCp{C'} }{ \norm{ \vxCp{C'} }_\Ci \leq\rho } } \absb{  \Pc{C} f(\vxCp{C}) - P_{C''}f(\vxCp{C''}) } = 0.  
  \]
  Note that in the particular case of the set-indexed Brownian motion, \Pii is characterized by the following transition densities
  \[
    \forall C=A\setminus B\in\Ci;\quad \pc{C}(\vxCp{C};y) = \frac{1}{\pthb{ 2\pi \,m(C) }^{d/2}} e^{-\frac{\norm{y - \Delta x_B}_2^2}{2m(C)}}.
  \]
  Finally, we observe that Theorem \ref{th:c_markov_CI} extends the Cairoli-Walsh Commutation Theorem (see e.g. in \cite{Walsh(1986)}) to the set-indexed formalism, showing that the Brownian motion history is a commuting filtration.
\end{example}

\begin{example}[\textbf{Set-indexed $\alpha$-stable Ornstein--Uhlenbeck process}] \label{ex:si_ou}
  The classic Ornstein--Uhlenbeck (OU) process is a well-known stochastic process which has the following integral representation (see e.g. \cite{Samorodnitsky.Taqqu(1994)}):
  \begin{equation} \label{eq:stable_ou}
    X_t = \int_{-\infty}^t e^{-\lambda(t-u)} M(\dt u),
  \end{equation}
  where $\lambda>0$ and $M$ is symmetric $\alpha$-stable random measure ($\alpha\in\ivof{0,2}$) with Lebesgue control measure. 
  
  A set-indexed extension of this process can not be directly deduced from Equation \eqref{eq:stable_ou}, Nevertheless, as $X$ is also a Markov process, a \Ci-transition system which generalizes the OU Markov kernel can be introduced. 
  
  More precisely, on the space $(E,\Ei)=(\R,\Bi(\R))$, a \emph{set-indexed $\alpha$-stable Ornstein--Uhlenbeck} process is defined as the \Ci-Markov process with any initial distribution $\nu$ and the \Ci-transition system \Pii characterized by 
  \begin{equation} \label{eq:siou_trans}
    \forall \vxCp{C}\in E^{\abs{\Cp{C}}}; \quad \Pc{C}f(\vxCp{C}) = \espBB{ f\pthbb{ \sigma_C X + e^{-\lambda m(A)}\bktbb{ \sum_{i=1}^{\abs{\Cp{C}}} (-1)^{\eCpi{i}}\, x_{\Cpi{C}{i}}\, e^{\lambda m(\Cpi{C}{i})} }  } }
  \end{equation}
  where $\alpha\in\ivof{0,2}$, $X$ is a symmetric $\alpha$-stable variable $S_\alpha(1,0,0)$, $f: E\rightarrow\R_+$ is a measurable function and $\sigma_C$ is defined by
  \begin{equation} \label{eq:siou_trans2}
    \sigma_C^\alpha = \frac{\sigma^\alpha}{\alpha\lambda} \pthbb{ 1 - e^{-\alpha\lambda m(A)} \bktbb{ \sum_{i=1}^{\abs{\Cp{C}}} (-1)^{\eCpi{i}} e^{\alpha\lambda m(\Cpi{C}{i})} } }.
  \end{equation}
  Let us prove that \Pii is a well-defined \Ci-transition system.
  \begin{enumerate}[ \it 1)]
    \item For any $C\in\Ci$, $\Pc{C}(\vxCp{C};\dt x_A)$ is a transition probability;\vsp
    \item $\Pc{\vset}(x;\dt y) = \delta_x(\dt y)$ since $\sigma_\vset=0$;\vsp
    \item for any $C = A\setminus B\in\Ci$ and $A'\in\Ai$, let $C' = C\cap A'$ and $C'' = C\setminus A'$ in \Ci.
    Let us show that for any measurable function $f$,
    $\Pc{C}f(\vxCp{C}) = \int_{E^2} \Pc{C'}(\vxCp{C'};d x_{A'}) \, \Pc{C''}(\vxCp{C''};\dt x_A) f(x_A)$. 
    Owing to Equation \eqref{eq:siou_trans},
    \begin{align*}
      \Pc{C''}f(\vxCp{C''}) = \espBB{ f\pthbb{ \sigma_{C''} X + e^{-\lambda m(A)}\bktbb{ \sum_{i=2}^{\abs{\Cp{C''}}} (-1)^{\eCpi{i}}\, x_{\Cpi{C''}{i}}\, e^{\lambda m(\Cpi{C''}{i})} } + x_{A'} e^{-\lambda m(A\setminus A')} } },
    \end{align*}
    where the numbering of $\Cp{C''}$ is assumed to satisfy $\Cpi{C''}{1} = A'$. Suppose $\widetilde X$ is random variable $S_\alpha(1,0,0)$ independent of $X$. Then,
    \begin{align*}
      \Pc{C'}\Pc{C''}f &= \espBB{ f\pthbb{ \sigma_{C''} X + e^{-\lambda m(A)}\bktbb{ \sum_{i=2}^{\abs{\Cp{C''}}} (-1)^{\eCpi{i}''}\, x_{\Cpi{C''}{i}}\, e^{\lambda m(\Cpi{C''}{i})} } \\
      &\qquad+ \brcbb{  \sigma_{C'} \widetilde X + e^{-\lambda m(A')}\bktbb{ \sum_{i=1}^{\abs{\Cp{C'}}} (-1)^{\eCpi{i}'}\, x_{\Cpi{C'}{i}}\, e^{\lambda m(\Cpi{C'}{i})} }  } e^{-\lambda m(A\setminus A')} } }.
    \end{align*}
    Let $h:\Ai\rightarrow \R$ be the deterministic map $h(A)=x_A e^{\lambda m(A)}$. Owing to Equation \eqref{eq:inc_exc_formula}, it has an additive extension $\Delta h$ on \Aiu which satisfies:
    \[
      \Delta h(B\cup A') = \sum_{i=1}^{\abs{\Cp{C''}}} (-1)^{\eCpi{i}''}\, x_{\Cpi{C''}{i}}\, e^{\lambda m(\Cpi{C''}{i})}
      \quad\text{and}\quad\Delta h(B\cap A') = \sum_{i=1}^{\abs{\Cp{C'}}} (-1)^{\eCpi{i}'}\, x_{\Cpi{C'}{i}}\, e^{\lambda m(\Cpi{C'}{i})}
    \]
    Furthermore, using the inclusion-exclusion principle,
    \[
      \Delta h(B\cup A') - h(A') + \Delta h(B\cap A') = \Delta h(B) = \sum_{i=1}^{\abs{\Cp{C}}} (-1)^{\eCpi{i}}\, x_{\Cpi{C}{i}}\, e^{\lambda m(\Cpi{C}{i})}.
    \]
    Therefore,
    \begin{align*}
      \Pc{C'}\Pc{C''}f(\vxCp{C}) = \espB{ f\pthB{ \sigma_{C''} X + \sigma_{C'} e^{-\lambda m(A\setminus A')} \widetilde X + e^{-\lambda m(A)}\Delta h(B) } }.
    \end{align*}
    
    Since $X$ and $\widetilde X$ are two independent \SaS variables, we know that $\sigma_{C''} X + \sigma_{C'} e^{-\lambda m(A\setminus A')} \sim S_\alpha(\sigma,0,0)$, where 
    \[
      \sigma^\alpha = \sigma_{C'}^\alpha + \sigma_{C''}^\alpha e^{-\alpha \lambda m(A\setminus A')}.
    \] 
    According to the Definition \eqref{eq:siou_trans2} of $\sigma_C$ and the inclusion-exclusion principle \eqref{eq:inc_exc_formula}, we obtain $\sigma = \sigma_C$, therefore proving the Chapman--Kolmogorov equation $\Pc{C}f = \Pc{C'}\Pc{C''}f$.
  \end{enumerate}
  Hence, we have shown that such a \Ci-Markov process exists. One can easily verify that it corresponds to the Markov kernel of the usual Ornstein--Uhlenbeck process in the case of the one-parameter indexing collection $\Ai=\brc{\ivff{0,t}; t\in\R_+}$.
  
  The Gaussian particular case $\alpha=2$ is studied in more details in \cite{Balanca.Herbin(2012)a}. Its kernel \Pii is characterized by the following transition densities
  \begin{equation*}
    \pc{C}(\vxCp{C};y) = \frac{1}{\sigma_C\sqrt{2\pi}} \exp\bktBB{ -\frac{1}{2\sigma_C^2} \pthbb{ y - e^{-\lambda m(A)}\bktbb{ \sum_{i=1}^{\abs{\Cp{C}}} (-1)^{\eCpi{i}}\, x_{\Cpi{C}{i}}\, e^{\lambda m(\Cpi{C}{i})} } }^2 },
  \end{equation*}
  If $\nu$ is the Dirac distribution $\delta_x$, $x\in\R$, then $X$ is Gaussian process such that for all $U,V\in\Ai$
  \begin{align*}
    \esp[_x]{X_U} = x e^{-\lambda m(U)} \quad\text{and}\quad \cov_x(X_U,X_V) = \frac{\sigma^2}{2\lambda}\pthb{ e^{-\lambda m(U\disy V)} - e^{-\lambda( m(U)+m(V) )} }.
  \end{align*}
  Even though the form of the transition probabilities exhibited in Equations~\eqref{eq:siou_trans} and~\eqref{eq:siou_trans2} may not seem very intuitive, \citet{Balanca.Herbin(2012)a} have adopted a more constructive presentation of the Gaussian Ornstein--Uhlenbeck process which helps to understand our present \Ci-Markov definition.
  
  Finally, similarly to the set-indexed L\'evy processes, we observe that \Pii is homogeneous when the measure $m$ is compatible with the family of operators $(\theta_U)_{U\in\Ai}$ and Feller under the mild condition \eqref{eq:feller_assumption}.
\end{example}

\begin{example}[\textbf{Set-indexed additive Lévy and product processes}] \label{ex:si_prod_add}
  From the previous examples of \Ci-Markov processes, more complex objects can be constructed. Named \emph{product} and \emph{additive Lévy} set-indexed processes, they still satisfy the \Ci-Markov property and constitute a generalization of the multiparameter examples presented by \citet{Khoshnevisan(2002)}.

  For this purpose, we first need to introduce the idea of \emph{product indexing collections}, previously mentioned in the Introduction. Given $m$ indexing collections $\Ai_1,\dotsc,\Ai_m$ on respectively $\Ti_1,\dotsc,\Ti_m$, we define the product indexing collection $\Ai$ as follows:
  \begin{equation}  \label{eq:prod_ind}
    \Ai = \brcb{A_1\times\dotsb\times A_m : A_i\in\Ai_i, \ 1\leq i\leq m},
  \end{equation}
  on the space $\Ti = \Ti_1\times\dotsb\times\Ti_m$. Conditions $(i)-(iii)$ of the definition of an indexing collection can be easily checked. 
  The \emph{Shape assumption} is slightly more difficult to verify. Up to an induction reasoning on $m$, we can assume that $m=2$. Then, let $B=\cup_{j=1}^k (A_{1,j}\times A_{2,j})$ be an element of $\Aiu$ and $A=A_1\times A_2\subseteq B$ belong to \Ai. Let also $I_1 = \brc{j : A_1 \subseteq A_{1,j}}$ and $I_2 = \brc{j : A_2 \subseteq A_{2,j}}$. We have to prove that $I_1\cap I_2\neq\vset$. Suppose on the contrary that $I_1\cap I_2=\vset$ and set 
  \[
    t_1\in A_1\setminus\pthb{\cup_{j\notin I_1} A_{1,j}}\quad\text{and}\quad t_2\in A_2\setminus\pthb{\cup_{j\notin I_2} A_{2,j}}.
  \]
  The existence of $(t_1,t_2)$ is due to the Shape assumption on $\Ai_1$ and $\Ai_2$. Then, let $j\in\brc{1,\dots,k}$. If $j\notin I_1$ and $j\notin I_2$, then $t_1\notin A_{1,j}$ and $t_2\notin A_{2,j}$. Moreover, if $j\in I_1$, then $j\notin I_2$ and thus $t_2\notin A_{2,j}$. Similarly, if $j\in I_2$, then $t_1\notin A_{1,j}$. Therefore, $(t_1,t_2)\in A_1\times A_2$ but $(t_1,t_2)\notin B$, which contradicts the hypothesis $A\subseteq B$. Hence, $I_1\cap I_2\neq \vset$ and $\Ai$ satisfies the Shape assumption.

  A class of approximating functions $(g_n)_{n\in\N}$ on \Ai can be easily deduced from existing ones on $\Ai_1,\dotsc,\Ai_m$. 
  Moreover, any family of filtrations indexed by $\Ai_1,\dotsc,\Ai_m$ leads to an \Ai-indexed extension defined as follows:
  \[
    \forall A=A_1\times\dotsb\times\Ai_m\in\Ai;\quad \Fi_A = \bigvee_{i=1}^m \Fi^i_{A_i}.
  \]
  Throughout, for any $C=A\setminus B\in\Ci$, $C_1,\dots,C_m$ will denote the following increments:
  \begin{equation}  \label{eq:incr_nota}
    C_i = A_i \setminus \pthb{\cup_{j=1}^k A_{i,j}} \in\Ci_i,\quad 1\leq i\leq m.
  \end{equation}
  Note that $C$ does not correspond to the direct product of $C_1,\dotsc,C_m$.
  
  \vsp

  Based on this construction procedure, we are able to introduce larger and richer classes of \Ci-Markov processes by combining previous examples.

  Let us first describe the family of \emph{product \Ci-Markov processes}. Given $m$ independent \Ci-Markov processes $X^1,\dotsc,X^m$, we define the product process $X$ as the direct product of the latter:
  \begin{equation}  \label{eq:def_prod_proc}
    \forall A=A_1\times\dotsb\times A_m \in\Ai;\quad X_A = X^1_{A_1}\otimes\dotsb\otimes X^m_{A_m}.
  \end{equation}
  Owing to the independence of $X^1,\dotsc,X^m$, a monotone class argument shows that $X$ is a \Ci-Markov process. Moreover, its transition probabilities are given by
  \begin{equation}  \label{eq:trans_prod}
    \forall C\in\Ci;\quad \Pc{C}(\vxCp{C};\dt x_A) = \prod_{i=1}^m \Pce{C_i}{i}(\vxCp{C_i};\dt x_{A_i}),
  \end{equation}
  where $C_i$ is the increment defined in Equation~\eqref{eq:incr_nota} and $\Pce{C_i}{i}(\vxCp{C_i};\dt x_{A_i})$ denotes the transition probabilities of $X^i$, $1\leq i \leq m$. A slightly technical calculus also shows that if $X^1,\dotsc,X^m$ are \Ci-Feller processes, then $X$ is also \Ci-Feller.

  Note that, still using the notation introduced in Equation \eqref{eq:incr_nota}, any increment of $X$ can be written as
  \[
    \forall C\in\Ci;\quad \Delta X_C = \Delta X^1_{C_1}\otimes\dotsb\otimes \Delta X^m_{C_m}.
  \]
  Particularly, if $X^1,\dotsc,X^m$ are processes with independent increments, then so does $X$.\vsp

  A second class of processes we are interested in is the family of \emph{set-indexed additive Lévy processes}. Suppose that $X^1,\dotsc,X^m$ are $\R^d$-valued set-indexed Lévy processes, characterized by $(\nu_1,m_1),\dotsc,(\nu_m,m_m)$, respectively. Then, $X$ is defined as
  \begin{equation}  \label{eq:def_add_levy}
    \forall A = A_1\times\dotsb\times A_m \in\Ai;\quad X_A = X^1_{A_1} +\dotsb+ X^m_{A_m}.
  \end{equation}
  Similarly to product processes, increments of $X$ have the following form,
  \[
    \forall C\in\Ci;\quad \Delta X_C = \Delta X^1_{C_1} +\dotsb+ \Delta X^m_{C_m},
  \]
  showing in particular that $X$ has independent increments. Its transition probabilities can be obtained as well:
  \begin{equation}  \label{eq:trans_add}
    \forall C\in\Ci;\quad \Pc{C}(\vxCp{C};\Gamma) = \pthB{\nu_1^{m_1(C_1)}\ast\dotsb\ast\nu_m^{m_m(C_m)}}(\Gamma - \Delta x_B),
  \end{equation}
  where $\Gamma\in\Bi(\R^d)$. The transition system $\Pii$ is Feller if the mild assumption \eqref{eq:feller_assumption} holds on $m_1,\dotsc,m_m$. It is homogeneous as well if $X^1,\dotsc,X^m$ are.

  Note that $X$ is not necessarily a \emph{set-indexed L\'evy process}, as defined previously in Example~\ref{ex:si_levy}. Nevertheless, a sufficient condition is the existence of a measure $\nu$ and $\alpha_1,\dotsc,\alpha_N$ in \Rpe such that $\nu = \nu_1^{\alpha_1} = \dotsb = \nu_N^{\alpha_N}$. In this case, we observe that $X$ is the Lévy process characterised by $(\nu,m_\alpha)$, where the measure $m_\alpha$ is defined as follows:
  \begin{equation}  \label{eq:add_levy_meas}
    \forall A=A_1\times\dotsb\times A_m;\quad m_\alpha(A) = \sum_{i=1}^m \alpha_i \,m_i(A_i).
  \end{equation}
  Conversely, set-indexed Lévy processes are usually not additive processes (e.g. the well-known multiparameter Brownian sheet), since in general, a measure $m$ does not have the form displayed in Equation \eqref{eq:add_levy_meas}.
\end{example}

\section{\texorpdfstring{Multiparameter \Ci-Markov processes}{Multiparameter C-Markov property}} \label{sec:mpCMarkov}

The extension of the one-parameter Markov property to the multiparameter setting has been intensively investigated and a large literature already exists on the subject. Since the set-indexed formalism covers multiparameter processes, it is therefore natural to study more precisely the \Ci-Markov property in this setting. 

Consequently, in this section, $\Ti$ and $\Ai$ designate $\R^N_+$ and the indexing collection $\brc{{\ivff{0,t}}; t\in\R_+^N}$, respectively. As previously noted, the natural translation on $\R^N_+$ leads to the definition of the shift operators $\theta_u(t) = \theta_{\ivff{0,u}}\pthb{ \ivff{0,t} } \eqdef \ivff{0,t+u}$, for all $u,t\in\R^N_+$.

\subsection{\texorpdfstring{Right-continuous modification of \Ci-Feller processes}{Right-continuous modification of C-Feller processes}}

We begin by extending a classic result of the theory of Markov processes: multiparameter \Ci-Feller processes have a right-continuous modification which is \Ci-Markov with respect to the augmented filtration. 

In the rest of the section, $E_\Delta$ denotes the usual one-point compactification of $E$, i.e. the set $E\cup\brc{\Delta}$ endowed with the following topology: $A\subset E_\Delta$ is open if either $A\subset E$ is open in the topology of $E$ or there exists a compact $K\subset E$ such that $A = E_\Delta\setminus K$.

\begin{theorem}[\textbf{C\`ad modification}] \label{th:mpcad}
  Let $\pth{X_t}_{t\in\R_+^N}$ be a multiparameter process, $\pth{\Fi_t}_{t\in\R^N_+}$ be a filtration and $(\Pr_x)_{x\in E}$ be a collection of probability measures such that for every $x\in E$, $\pr[_x]{X_0=x}=1$ and $X$ is a \Ci-Feller process on $(\Omega,\Fi,\Pr_x)$ w.r.t. $\pth{\Fi_t}_{t\in\R^N_+}$.
  
  \noindent Then, there exists an $E_\Delta$-valued process $\pth{\widetilde{X}_t}_{t\in\R^N_+}$ such that for every $x\in E$:\vspace*{-0.8em}
  \begin{enumerate}[ \it (i)]
    \item $t\mapsto \widetilde{X}_t$ is right-continuous $\Pr_x$-a.s.;
    \item for all $t\in\R_+$, $\widetilde{X}_t = X_t$ $\Pr_x$-a.s.;
    \item $\widetilde{X}$ is a \Ci-Feller process on $(\Omega,\Fi,\Pr_x)$ w.r.t. the augmented filtration $\pth{\widetilde\Fi_t}_{t\in\R^N_+}$.
  \end{enumerate}
\end{theorem}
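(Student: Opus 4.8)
The plan is to adapt to the multiparameter setting the classical construction of a right-continuous modification of a Feller process by supermartingale regularization, the genuinely new difficulty being the partial order of $\R^N_+$. Observe first that, for $\Ai=\brc{\ivff{0,t}}$, the outer-continuity condition \eqref{eq:si_cad} of a sample path is exactly its right-continuity for the order of $\R^N_+$ (take $A=\ivff{0,t}$, $U=\ivff{0,s}$ with $t\leq s$ and $s\downarrow t$). Hence, once \textit{(i)} and \textit{(ii)} are established, point \textit{(iii)} follows immediately from Theorem \ref{th:c_markov_aug} applied to $\widetilde X$: being a modification of $X$, it has the same finite-dimensional laws and therefore the same \emph{Feller} \Ci-transition system $\Pii$, so $\widetilde X$ is \Ci-Feller and, having outer-continuous paths, is \Ci-Markov with respect to the augmented filtration. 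Thus the whole difficulty reduces to building a right-continuous modification and identifying it with $X$.

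To produce the supermartingales I first render the transition mechanism time-homogeneous. For $s\leq t$, projecting $X$ on the straight flow joining $\ivff{0,s}$ to $\ivff{0,t}$ (Proposition \ref{prop:c_markov_eflow}) gives $\espc[_x]{f(X_t)}{\Fi_s}=\Pc{\ivff{0,t}\setminus\ivff{0,s}}f(X_s)$, so that the operators $V_{s,t}\eqdef\Pc{\ivff{0,t}\setminus\ivff{0,s}}$ form, by the Chapman--Kolmogorov equation \eqref{eq:chapman_kolmogorov}, a two-parameter family with $V_{s,u}=V_{s,t}V_{t,u}$. Passing to the space-time process $\widehat X_t\eqdef(t,X_t)$ on $\widehat E\eqdef\R^N_+\times E$, the operators $\widehat P_r F(s,y)\eqdef\int_E V_{s,s+r}(y;\dt z)\,F(s+r,z)$, $r\in\R^N_+$, form a genuine semigroup, which the \Ci-Feller property of $\Pii$ — in particular the uniform control \eqref{eq:feller_property} of $\Pc{C}$ by $P_{C''}$ along a small increment — turns into a Feller semigroup on $C_0(\widehat E)$. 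For $\lambda\in\R^N$ with all $\lambda_i>0$ and $F\in C_0(\widehat E)$, $F\geq 0$, set $\widehat U_\lambda F\eqdef\int_{\R^N_+} e^{-\lambda\cdot r}\,\widehat P_r F\,\dt r$. Using $\widehat P_{r_1}\widehat P_{r_2}=\widehat P_{r_1+r_2}$ and the positivity of $F$, a direct computation shows that
\[
  Y^{F,\lambda}_t\eqdef e^{-\lambda\cdot t}\,\widehat U_\lambda F(\widehat X_t),\qquad t\in\R^N_+,
\]
is a nonnegative supermartingale for the order of $\R^N_+$ (i.e. $\espc[_x]{Y^{F,\lambda}_u}{\Fi_t}\leq Y^{F,\lambda}_t$ for $t\leq u$) under every $\pr[_x]{\,\cdot\,}$.

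Regularization then proceeds along the countable dense set $D$ of dyadic points of $\R^N_+$. Fixing a countable family $(F_j)_j$ dense in $C_0(\widehat E)$ and countably many rational $\lambda$, I regularize each $Y^{F_j,\lambda}$: this is where the multiparameter structure is essential, and I would run the $N$-fold Doob regularization inductively on the coordinates, using the conditional-independence (commutation) property of Theorem \ref{th:c_markov_CI} to commute the successive one-parameter conditional expectations and thereby guarantee that the limits taken from the cone $\brc{s\geq t}$ exist. This produces a single $\pr[_x]{\,\cdot\,}$-almost sure event on which $\lim_{D\ni s\downarrow t}Y^{F_j,\lambda}_s$ exists for every $t$, every $j$ and every chosen $\lambda$. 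Since the functions $\widehat U_\lambda F_j$ separate the points of $\widehat E$ (indeed $\prod_i\lambda_i\,\widehat U_\lambda F\to F$ when $\lambda_i\to\infty$, by Feller continuity) and the first coordinate of $\widehat X$ is continuous, one deduces on this event that $\widehat X_s$, hence $X_s$, converges in the compactification $E_\Delta$ as $D\ni s\downarrow t$; the compactification is needed because $X$ may escape to infinity while the bounded functionals $\widehat U_\lambda F_j$ still converge. I then set $\widetilde X_t\eqdef\lim_{D\ni s\downarrow t}X_s$ on this event (and $\widetilde X_t\eqdef\Delta$ off it), which is right-continuous by construction.

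It remains to identify $\widetilde X_t$ with $X_t$: the weak Feller estimate \eqref{eq:feller_property_weak}, namely $\normb{\Pc{\ivff{0,s}\setminus\ivff{0,t}}f-f}_\infty\to 0$ as $s\downarrow t$, shows that $X_s\to X_t$ in probability as $s\downarrow t$; matching this with the almost-sure limit defining $\widetilde X_t$ gives $\widetilde X_t=X_t$ $\pr[_x]{\,\cdot\,}$-a.s. (in particular $\widetilde X_t\in E$), which is \textit{(ii)}, while \textit{(i)} has just been obtained, and \textit{(iii)} then follows as in the first paragraph. The main obstacle is the regularization step of the third paragraph: unlike in one parameter, Doob's theorem does not directly deliver limits along a partial order, and the crux is to exploit the commutation property (Theorem \ref{th:c_markov_CI}) so as to reduce the $N$-parameter regularization to $N$ successive one-parameter ones while keeping control, uniformly in $t$, of the oscillations of the field over the whole cone $\brc{s\geq t}$.
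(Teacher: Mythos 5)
Your overall architecture (build a right-continuous modification by regularizing countably many positive functionals of the process along rational points, identify it with $X$ via the Feller estimate, then invoke Theorem \ref{th:c_markov_aug} for point \textit{(iii)}) matches the paper's, and your reduction of \textit{(iii)} and your identification step for \textit{(ii)} are essentially the published ones. The route to \textit{(i)}, however, is different and contains the one genuine gap. The paper does not use resolvent supermartingales: for each $r\in\Q^N_+$ and each $\varphi$ in a countable separating subset of $C_0^+(E)$ it forms the \emph{bounded multiparameter martingale} $M^r_t=\Pc{\ivff{0,r}\setminus\ivff{0,t}}\varphi(X_t)$, $t\preceq r$ (the martingale property being immediate from Chapman--Kolmogorov), notes via Theorem \ref{th:c_markov_CI} that the natural filtration commutes, and then invokes Bakry's theorem as quoted in \cite{Khoshnevisan(2002)}: a bounded multiparameter martingale with respect to a commuting filtration admits, almost surely, limits along $s\downarrow t$, $s\in\Q^N_+$, simultaneously for all $t$. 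The existence of $\lim_{s\downarrow t}X_s$ in $E_\Delta$ then follows by a contradiction argument combining the separating family with the Feller estimate.

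Your version replaces these martingales by the supermartingales $Y^{F,\lambda}$, and the step you yourself flag as ``the crux'' --- running $N$ successive one-parameter Doob regularizations and using commutation to control oscillations over the whole cone $\brc{s\succeq t}$ --- is precisely the part that is not carried out and for which no off-the-shelf theorem exists. Bakry's result is a statement about \emph{martingales}; the regularity theory of multiparameter supermartingales, even under commuting filtrations, is substantially more delicate, since coordinate-wise regularization gives right limits in each variable separately but does not by itself yield a limit from the full cone, and you would need a quadrant-type upcrossing estimate to close this. There are also two secondary weaknesses in your construction: the space-time semigroup $\widehat P_r$ maps into $C_0(\widehat E)$ only if $s\mapsto\Pc{\ivff{0,s+r}\setminus\ivff{0,s}}f$ is continuous in $s$, which is not what Definition \ref{def:c_markov_feller} provides (Equation \eqref{eq:feller_property} compares $\Pc{C}$ with $\Pc{C''}$ along a shrinking sub-increment, not two increments with shifted base points); and the separation property $\prod_i\lambda_i\,\widehat U_\lambda F\to F$ needs strong continuity of $\widehat P_r$ at $r=0$, which rests on that same unestablished space-time Feller property. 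The cleanest repair is to abandon the resolvent and use the bounded martingales $M^r$ above, for which Bakry's theorem applies verbatim.
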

\begin{proof}
  Let $\Hi =\brc{\varphi_n;\, n\in\N}$ be a countable collection of functions in $C_0^+(E)$ which separate points in $E_\Delta$, i.e., for any $x,y\in E_\Delta$, there exists $\varphi\in\Hi$ such that $\varphi(x)\neq \varphi(y)$.
  For all $r\in\Q^N_+$ and any $\varphi\in\Hi$, the process $M^r$ is defined as follows:
  \[
    \forall t\preceq r\in\R^N_+;\quad M^r_t = \Pc{\ivff{0,r}\setminus\ivff{0,t}} \varphi(X_t).
  \]
  For every $x\in E$, $M^r$ is multiparameter martingale with respect to the natural filtration $\pth{\Fi^0_t}_{t\in\R^N_+}$ of $X$ (i.e. $\Fi^0_t \eqdef \sigma(\brc{X_s; s \preceq t, s\in\R_+^N})$):
  \begin{align*}
    \forall t\prec t'\preceq r\in\R^N_+;\quad  
    \espc[_x]{M^r_{t'}}{\Fi^0_{t}}
    &= \espc[_x]{ \Pc{\ivff{0,r}\setminus\ivff{0,t'}} \varphi(X_{t'}) }{\Fi^0_{t}} \\
    &= \Pc{\ivff{0,t'}\setminus\ivff{0,t}}\pthb{ \Pc{\ivff{0,r}\setminus\ivff{0,t'}} \varphi }(X_t) \\
    &= \Pc{\ivff{0,r}\setminus\ivff{0,t}} \varphi(X_t) = M^r_t,
  \end{align*}
  using the Chapman--Kolmogorov Equation \eqref{eq:chapman_kolmogorov}.
  
  Owing to Theorem \ref{th:c_markov_CI}, the filtration $\pth{\Fi^0_t}_{t\in\R^N_+}$ is commuting. Then, as stated by \cite{Khoshnevisan(2002)} (and originally proved in \cite{Bakry(1979)}), since $M^r$ is a bounded multiparameter martingale with respect to a commuting filtration,  there exists an event $\Lambda_{\varphi,r}\in\Fi$ such that for every $x\in E$, $\pr[_x]{\Lambda_{\varphi,r}^c} = 0$ and for all $\omega\in\Lambda_{\varphi,r}$,
  \begin{equation} \label{eq:cad_mg}
    \lim_{s\downarrow t,\, s\in\Q_+^N} M^r_s(\omega) \text{ exists for all }t\prec r\in\R^N_+. 
  \end{equation}
  Let $\Lambda$ denote the event defined by
  \[
    \Lambda = \bigcap_{\varphi\in\Hi,r\in\Q_+^N} \Lambda_{\varphi,r}.
  \]
  Since the union is countable, $\pr[_x]{\Lambda^c}=0$ for every $x\in E$.
  
  Let us prove $\lim_{s\downarrow t,\,s\in\Q^N_+} X_s(\omega)$ exists for all $\omega\in\Lambda$. 
  We proceed by contradiction. Suppose there exist $\omega\in\Lambda$, $t\in\R_+^N$, and two decreasing sequences $(s^1_n)_{n\in\N},(s^2_n)_{n\in\N}$ in $\Q^N_+$ such that $\lim_{n\in\N} s^1_n = t$, $\lim_{n\in\N} s^2_n = t$ and
  \[
    X_t^1(\omega)\eqdef\lim_{s^1_n\downarrow t} X_{s^1_n}(\omega) \neq \lim_{s^2_n\downarrow t} X_{s^2_n}(\omega)\eqdef X_t^2(\omega),
  \]
  where the two limits stand in $E_\Delta$.
  
  Let $\varphi$ be a separating function in $\Hi$ such that $\eps = \abs{\varphi(X^1_t(\omega)) - \varphi(X^2_t(\omega))} > 0$ and $(u_n)_{n\in\N}$ be the decreasing sequence $u_n = (s^1_n\curlyvee s^2_n) + \frac{1}{n} \in\Q^N_+$.
  
  Since the transition system \Pii of $X$ is Feller, Equation \eqref{eq:feller_property_weak} implies the existence of $\alpha>0$ such that
  \[
    \forall u\preceq v\preceq 2t\in\R_+^N;\quad  \norm{u-v}_\infty\leq\alpha \quad\Longrightarrow\quad \normb{\Pc{\ivff{0,v}\setminus\ivff{0,u}}\varphi - \varphi }_\infty \leq \frac{\eps}{8}.
  \]
  Furthermore, since $u_n\rightarrow_n t$, there exists $k\in\N$ such that for all $n\geq k$, $\norm{u_n - t}\leq\alpha$, $\norm{u_k - s^1_n}\leq\alpha$ and $\norm{u_k - s^2_n}\leq\alpha$. 
  Therefore, for all $n\geq k$,
  \[
    \normb{\Pc{\ivff{0,u_k}\setminus\ivff{0,s^1_n}}\varphi - \varphi }_\infty \leq \frac{\eps}{8}\quad\text{and}\quad \normb{\Pc{\ivff{0,u_k}\setminus\ivff{0,s^2_n}}\varphi - \varphi }_\infty \leq \frac{\eps}{8}.
  \]
  Owing to Equation \eqref{eq:cad_mg} and as $\omega\in\Lambda_{\varphi,u_k}$, $k$ can be chosen large enough such that for all $n\geq k$, we have $\abs{M^{u_k}_{s_n^1}(\omega) - M^{u_k}_{s_n^2}(\omega)} \leq \tfrac{\eps}{8}$, i.e.
  \[
    \forall n\geq k; \quad \absb{ \Pc{\ivff{0,u_k}\setminus\ivff{0,s^1_n}}\varphi(X_{s^1_n}(\omega)) - \Pc{\ivff{0,u_k}\setminus\ivff{0,s^2_n}}\varphi(X_{s^2_n}(\omega)) } \leq \frac{\eps}{8}.
  \]
  Finally, as the function $\varphi$ is continuous, $k$ can be supposed to satisfy
  \[
    \forall n\geq k;\quad \absb{ \varphi(X^1_t(\omega)) - \varphi(X_{s^1_n}(\omega)) } \leq\frac{\eps}{8}\quad\text{and}\quad\absb{ \varphi(X^2_t(\omega)) - \varphi(X_{s^2_n}(\omega)) } \leq\frac{\eps}{8}.
  \]
  Due to previous inequalities, for all $n\geq k$,
  \begin{align*}
    \absb{\varphi(X^1_t(\omega)) - \varphi(X^2_t(\omega))} &\leq 
    \absb{ \varphi(X^1_t(\omega)) - \varphi(X_{s^1_n}(\omega)) } \\
    &+ \absb{ \varphi(X_{s^1_n}(\omega)) - \Pc{\ivff{0,u_k}\setminus\ivff{0,s^1_n}}\varphi(X_{s^1_n}(\omega)) } \\
    &+ \absb{ \Pc{\ivff{0,u_k}\setminus\ivff{0,s^1_n}}\varphi(X_{s^1_n}(\omega)) - \Pc{\ivff{0,u_k}\setminus\ivff{0,s^2_n}}\varphi(X_{s^2_n}(\omega)) } \\
    &+ \absb{ \varphi(X_{s^2_n}(\omega)) - \Pc{\ivff{0,u_k}\setminus\ivff{0,s^2_n}}\varphi(X_{s^2_n}(\omega)) } \\
    &+ \absb{ \varphi(X^2_t(\omega)) - \varphi(X_{s^2_n}(\omega)) } \\
    &\leq \frac{5}{8}\eps,
  \end{align*}
  which clearly contradicts the definition of $\eps$.
  
  Hence, for all $\omega\in\Lambda$ and all $t\in\R^N_+$, $\lim_{s\downarrow t,\,s\in\Q^N_+} X_s(\omega)$ exists in $E_\Delta$ and the process $\widetilde{X}$ can be defined as follows:
  \[
    \forall t\in\R^N_+;\quad \widetilde{X}_t = 
    \begin{cases}
    \lim_{s\downarrow t,\,s\in\Q^N_+} X_s(\omega) & \text{if } \omega\in\Lambda \\
    x_0  & \text{if } \omega\notin\Lambda,
    \end{cases}
  \]
  where $x_0$ is an arbitrary point in $E$. This new process $\widetilde{X}$ is clearly an $E_\Delta$-valued process with right-continuous sample paths.
  
  Let us now prove $\widetilde X$ is a modification of $X$. Let $\varphi_1$ and $\varphi_2$ be in $C_0(E)$. Then, for every $x\in E$ and all $t\in\R^N_+$, the dominated convergence theorem and the Feller property induce
  \begin{align*}
    \espb[_x]{\varphi_1(\widetilde{X}_t) \varphi_2(X_t)}
    &= \lim_{s\downarrow t,\, s\in\Q^N_+} \espb[_x]{\varphi_1(X_s) \varphi_2(X_t)} \\
    &= \lim_{s\downarrow t,\, s\in\Q^N_+} \espb[_x]{ \espc[_x]{ \varphi_1(X_s) }{\Fi_t} \,\varphi_2(X_t)} \\
    &= \lim_{s\downarrow t,\, s\in\Q^N_+} \espb[_x]{ \Pc{\ivff{0,s}\setminus\ivff{0,t}}\varphi_1(X_t) \,\varphi_2(X_t)}
    = \espb[_x]{\varphi_1(X_t) \varphi_2(X_t)}.
  \end{align*}
  A classic monotone class argument extends this equality to any measurable function $f:E^2\rightarrow\R_+$, and in particular shows that
  \[
    \forall x\in E,\ \forall t\in\R^N_+;\quad \pr[_x]{ X_t = \widetilde{X}_t } = 1.
  \]
  The process $\widetilde X$ is clearly adapted to the filtration $(\widetilde\Fi_t)_{t\in\R^N_+}$ and using Theorem \ref{th:c_markov_aug}, we obtain the \Ci-Markov property with respect to the augmented filtration.
\end{proof}
Note that the martingale argument used in the previous proof can not be directly transpose to the set-indexed formalism since there does not exist any result on set-indexed martingales stating the existence of a right-continuous modification (see~\cite{Ivanoff.Merzbach(2000)} for more details on the theory of set-indexed martingales).

\subsection{\texorpdfstring{\Ci-Markov and multiparameter Markov properties}{C-Markov and multiparameter Markov properties}}

A large literature exists on multiparameter Markov properties, especially in the case of two-parameter processes. It includes the works \cite{Levy(1948),McKean(1963),Merzbach.Nualart(1990),Dalang.Walsh(1992),Dalang.Walsh(1992)a} on the previously mentioned \emph{sharp-Markov} and \emph{germ-Markov} properties, but also the literature on Gaussian Markov random fields \cite{Pitt(1971),Kunsch(1979),Carnal.Walsh(1991)} and strong Markov properties \cite{Evstigneev(1982),Evstigneev(1988),Merzbach.Nualart(1990),Kinateder(2000)}. Different surveys and books \cite{Rozanov(1982),Dozzi(1989),Dozzi(1991),Imkeller(1988)} have been written on these distinct topics. In the context of this article, we focus on two existing multiparameter properties which can be directly linked up to our work. \vsp

To begin with, let us mentioned the two-parameter \emph{$\ast$-Markov} property introduced by \citet{Cairoli(1971)}. A process $(X_{s,t})_{s,t}$ is said to be \emph{$\ast$-Markov} if for all $s,t\in\R_+$, $h,k>0$ and any $\Gamma\in\Ei$,
\begin{equation} \label{eq:star_markov}
  \prc{ X_{s+h,t+k} \in\Gamma }{\Gi^*_{s,t}} = \prc{ X_{s+h,t+k} \in\Gamma }{ X_{s,t},X_{s+h,t},X_{s,t+k} },
\end{equation}
where $\Gi^*_{s,t}$ correspond to the strong history previously introduced (Equation \eqref{eq:def_filtrations}). The $\ast$-Markov property has been widely studied and considered in the literature, leading to several interesting results, including a martingale characterization in the Gaussian case \cite{Nualart.Sanz(1979)}, the equivalence of several different definitions \cite{Korezlioglu.Lefort.ea(1981)}, some properties on transition probabilities \cite{Luo(1988)} and the c\`adl\`aguity of sample paths \cite{Zhou.Zhou(1993)}. We note that the $\ast$-Markov property also appears in the study of a two-parameter Ornstein--Uhlenbeck process \cite{Wang(1988),Zhang(1985)}.

As stated in the following proposition, it turns out that \Ci-Markov and $\ast$-Markov properties are equivalent in the two-parameter formalism.
\begin{proposition} \label{prop:c_markov_star}
  Let $\Ti=\R^2_+$, \Ai be the indexing collection $\brc{\ivff{0,t} ; t\in\R_+^2}$ and $(X_{s,t})_{s,t}$ be a two-parameter process.

  Then, $X$ is a \emph{$\ast$-Markov} process if and only if it satisfies the \emph{\Ci-Markov} property.
\end{proposition}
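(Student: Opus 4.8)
The plan is to read both properties as the \Ci-Markov identity \eqref{eq:c_markov} restricted to two nested families of increments, the rectangular ones producing the $\ast$-Markov property. Write $A=\ivff{0,a}$ and recall $\Gi^*_{s,t}=\sigma\pthb{X_u : u_1\le s\text{ or }u_2\le t}$. For the implication \Ci-Markov $\Rightarrow$ $\ast$-Markov, I would fix $s,t\in\R_+$, $h,k>0$ and take $A=\ivff{0,(s+h,t+k)}$, $B=\ivff{0,(s,t+k)}\cup\ivff{0,(s+h,t)}$; the two rectangles are incomparable, so this is the extremal representation and $C\eqdef A\setminus B=(s,s+h]\times(t,t+k]$. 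Here $\Ai_\ell=\brc{\ivff{0,(s,t)},\ivff{0,(s,t+k)},\ivff{0,(s+h,t)}}$ with meet $\ivff{0,(s,t)}$, and \eqref{eq:inc_exc_formula} gives $\Delta X_B=X_{s,t+k}+X_{s+h,t}-X_{s,t}$ with no cancellation, whence $\Cp{C}=\Ai_\ell$ and $\vXCp{C}=\pthb{X_{s,t},X_{s,t+k},X_{s+h,t}}$. Since $\ivff{0,u}\cap C=\vset$ exactly when $u_1\le s$ or $u_2\le t$, definition \eqref{eq:def_filtrations} yields $\Gc{C}=\Gi^*_{s,t}$, so \eqref{eq:c_markov} for this $C$ is \eqref{eq:star_markov} verbatim.

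The converse requires \eqref{eq:c_markov} for an arbitrary increment $C=A\setminus B$; ordering the extremal representation as a decreasing staircase $b^1_1<\dots<b^k_1$, $b^1_2>\dots>b^k_2$, the telescoping of \eqref{eq:inc_exc_formula} identifies $\Cp{C}$ as the outer corners $b^i$ together with the reentrant corners $c^i\eqdef(b^i_1,b^{i+1}_2)$, all of which lie in $B$ and are $\Gc{C}$-measurable. As $\vXCp{C}$ is $\Gc{C}$-measurable, it suffices to show $\espc{f(X_A)}{\Gc{C}}$ is a function of $\vXCp{C}$, and I would proceed by induction on $k$, filling the lowest notch. Putting $w\eqdef(b^2_1,b^1_2)$ and $B'\eqdef B\cup\ivff{0,w}$, one has $\ivff{0,b^1},\ivff{0,b^2}\subseteq\ivff{0,w}$, so $B'$ is a $(k-1)$-staircase with outer corners $w,b^3,\dots,b^k$, while $D\eqdef B'\setminus B=(b^1_1,b^2_1]\times(b^2_2,b^1_2]$ is a genuine rectangular increment whose three $\ast$-Markov corners are $\brc{c^1,b^1,b^2}\subseteq\Cp{C}$ and whose top value is $X_w$.

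Writing $C'\eqdef A\setminus B'\subseteq C$, so that $\Gc{C}\subseteq\Gc{C'}$, the induction hypothesis gives $\espc{f(X_A)}{\Gc{C'}}=\Phi\pthb{\vXCp{C'}}$; since $\Cp{C'}=\pthb{\Cp{C}\setminus\brc{b^1,c^1,b^2}}\cup\brc{w}$, the only argument of $\Phi$ not already $\Gc{C}$-measurable is $X_w$. By the tower property $\espc{f(X_A)}{\Gc{C}}=\espc{\Phi(\vXCp{C'})}{\Gc{C}}$, and it remains to integrate out $X_w$. Every $u\in B$ satisfies $u_1\le b^1_1$ or $u_2\le b^2_2$, hence $\Gc{C}\subseteq\Gi^*_{c^1}$ with $c^1=(b^1_1,b^2_2)$; applying \eqref{eq:star_markov} to the rectangle $D$ gives $\prc{X_w\in\Gamma}{\Gi^*_{c^1}}=\prc{X_w\in\Gamma}{X_{c^1},X_{b^1},X_{b^2}}$, and conditioning this further onto $\Gc{C}$ (under which $X_{c^1},X_{b^1},X_{b^2}$ are measurable) shows the law of $X_w$ given $\Gc{C}$ is a function of $\vXCp{C}$. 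Substituting into $\espc{\Phi(\vXCp{C'})}{\Gc{C}}$ leaves a function of $\vXCp{C}$ alone, closing the induction.

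The base case $k=1$ is the L-shaped increment $C=A\setminus\ivff{0,b^1}$, where $\Gc{C}=\Fi_{b^1}$ and $\vXCp{C}=\pthb{X_{b^1}}$: applying \eqref{eq:star_markov} to $\ivff{0,b^1}\to A$ reduces $\espc{f(X_A)}{\Fi_{b^1}}$ to $X_{b^1}$ and the two side corners $X_{a_1,b^1_2}$, $X_{b^1_1,a_2}$, and one must still see that their conditional laws given $\Fi_{b^1}$ depend only on $X_{b^1}$ --- equivalently that the horizontal and vertical projections of $X$ through $b^1$ are one-parameter Markov, which is a classical consequence of $\ast$-Markov \cite{Korezlioglu.Lefort.ea(1981)} terminating on the axes. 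I expect this base case, together with the verification that after all integrations the surviving conditioning variables are \emph{exactly} $\Cp{C}$, to be the main obstacle; the surrounding measure-theoretic manipulations (monotone class, tower property, and the final passage from a $\sigma(\vXCp{C})$-measurable conditional expectation to $\espc{f(X_A)}{\vXCp{C}}$) are routine.
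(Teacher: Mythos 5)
Your forward implication is exactly the paper's: the same choice of $C$, the same identifications $\vXCp{C}=(X_{s,t},X_{s+h,t},X_{s,t+k})$ and $\Gc{C}=\Gi^*_{s,t}$, so \eqref{eq:c_markov} for that increment is \eqref{eq:star_markov} verbatim. For the converse the paper is much terser: after identifying $\Cp{C}$ with the staircase corners it simply invokes Theorem $3.7$ of \citet{Korezlioglu.Lefort.ea(1981)}, which already describes conditioning on exactly that collection of corner variables. Your notch-filling induction is a genuinely different (and more self-contained) route: the step that adjoins $\ivff{0,w}$, reduces $\Cp{C'}$ to $\pthb{\Cp{C}\setminus\brc{b^1,c^1,b^2}}\cup\brc{w}$, checks $\Gc{C}\subseteq\Gi^*_{c^1}$ via $D\subseteq C$, and integrates out $X_w$ with \eqref{eq:star_markov} applied to the rectangle $D$ is correct and makes explicit how the purely rectangular property propagates through a staircase. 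What it buys is transparency; what it costs is that the base case is not closed by \eqref{eq:star_markov} alone, since that identity requires $h,k>0$ and says nothing about the conditional law of the two side corners $X_{(a_1,b^1_2)},X_{(b^1_1,a_2)}$ given $\Fi_{b^1}$ --- you correctly flag this and fall back on the same reference \cite{Korezlioglu.Lefort.ea(1981)} that the paper uses wholesale. Two small points to tighten: in the base case you should also treat the degenerate strips (e.g. $b^1_1=a_1$), where $\Gc{C}$ is the half-plane $\sigma$-field $\sigma\pthb{X_u:u_2\leq b^1_2}$ rather than $\Fi_{b^1}$, so the required statement is the Markov property of a section with respect to the strip history, again a KLM-type equivalence; and in the induction step you need the \emph{joint} conditional law of $X_w$ together with the frozen $\Gc{C}$-measurable coordinates, which is the routine monotone-class argument you allude to. With those caveats the proposal is a valid alternative proof.
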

\begin{proof}
  Let $X$ be a two-parameter \Ci-Markov process, $s,t\in\R_+$, $h,k>0$ and $\Gamma\in\Ei$. Define the increment $C =A\setminus B$ as $\ivff{0,(s+h,t+k)}\setminus\pthb{ \ivff{0,(s,t)}\cup\ivff{0,(s+h,t)}\cup\ivff{0,(s,t+k)} }$. We observe that $\vXCp{C} = (X_{s,t},X_{s+h,t},X_{s,t+k})$ and $\Gi^*_{s,t} = \Gc{C}$. Hence, the \Ci-Markov property $\prc{X_A\in\Gamma}{\Gc{C}} = \prc{X_A\in\Gamma}{\vXCp{C}}$ corresponds to $\ast$-Markov's definition (Equation \eqref{eq:star_markov}).
  
  Conversely, let $X$ be a $\ast$-Markov process and $C = A\setminus B$ be an increment where $A=\ivff{0,(s,t)}$ and $B = \cup_{i=1}^n \ivff{0,(s_i,t_i)}$. Without any loss of generality, we can assume that $s_1 < s_2 < \dotsc < s_n$. Since we consider an extremal representation of $B$, it must also satisfy $t_1 > t_2>\dotsc>t_n$. Therefore, $\Cp{C}$ has the following form
  \[
    \Cp{C} = \brcb{ \ivff{0,(s_1,t_1)},\ivff{0,(s_1,t_2)},\ivff{0,(s_2,t_2)},\dotsc,\ivff{0,(s_{n-1},t_n)},\ivff{0,(s_n,t_n)} }.
  \]
  The conditional expectation w.r.t. to such a collection of random variables $\vXCp{C}$ has already been considered in the work of \citet{Korezlioglu.Lefort.ea(1981)}, and the \Ci-Markov property is therefore a direct consequence of Theorem $3.7$ obtained in the latter.
\end{proof}
Proposition \ref{prop:c_markov_star} shows that the \Ci-Markov property offers an elegant way to extend the two-parameter $\ast$-Markov property to multiparameter processes, and more largely, to set-indexed processes. 

Moreover, the set-indexed formalism allows to simplify notations and concepts compared to the $\ast$-Markov framework. Indeed, as stated in \cite{Luo(1988),Zhou.Zhou(1993)}, the concept of $\ast$-transition function $(P^1,P^2,P)$ has been introduced to characterize the law of $\ast$-Markov process. The corresponding notion of \Ci-transition system \Pii replaces the triplet $(P^1,P^2,P)$, and reduces the different consistency hypotheses on $(P^1,P^2,P)$ to the single Chapman--Kolmogorov Equation \eqref{eq:chapman_kolmogorov}.
We also note that Theorem \ref{th:mpcad} extends the regularity result obtained by \citet{Zhou.Zhou(1993)} on two-parameter $\ast$-Markov processes.\vsp

A second interesting Markov property is the \emph{multiparameter Markov} property presented by \citet{Khoshnevisan(2002)} and widely studied in the literature (see e.g. \cite{Hirsch.Song(1995)a,Mazziotto(1988),Wong(1989),Khoshnevisan(2002)}).

An $E_\Delta$ valued process $X=\brc{X_t;\, t\in\R^N_+}$ is said to be a \emph{multiparameter Markov} process if there exists a filtration $(\Fi_t)_{t\in\R^N_+}$ and a family of operators $\EuScript{T}=\brc{\EuScript{T}_t;\,t\in\R^N_+}$ such that for every $x\in E$, there exists a probability measure $\Pr_x$ which satisfies
\begin{enumerate}[ \it (i)]
  \item $X$ is adapted to $(\Fi_t)_{t\in\R^N_+}$;
  \item $t\mapsto X_t$ is right-continuous $\Pr_x$-a.s.;
  \item for all $t\in\R^N_+$, $\Fi_t$ is $\Pr_x$-complete; moreover, $(\Fi_t)_{t\in\R^N_+}$ is a commuting filtration, i.e. for all $s,t\in\R^N_+$ and for any bounded $\Fi_t$-measurable random variable $Y$,
  \[
    \espc[_x]{Y}{\Fi_s} = \espc[_x]{Y}{\Fi_{s\curlywedge t}} \quad\Pr_x\text{-a.s.}
  \]
  \item for all $s,t\in\R^N_+$ and for any $f\in C_0(E)$, 
  \[
    \espc[_x]{f(X_{t+s})}{\Fi_s} = \EuScript{T}_t f(X_s) \quad\text{$\Pr_x$-a.s.}
  \]
  \item $\Pr_x(X_0=x)=1$.
\end{enumerate}
Furthermore, $X$ is said to be a \emph{Feller} process if
\begin{enumerate}[ \it (i)]
  \item for all $t\in\R^N_+$, $\EuScript{T}_t:C_0(E)\rightarrow C_0(E)$;
  \item for any $f\in C_0(E)$, $\lim_{t\rightarrow 0} \norm{\EuScript{T}_t f - f}_\infty = 0$. 
\end{enumerate}

\begin{proposition} \label{prop:c_markov_multi}
  Let $(\Pr_x)_{x\in E}$ be a family of probabilities and $X$ be a multiparameter homogeneous \Ci-Feller process w.r.t $(\Fi_t)_{t\in\R^N_+}$. Suppose $X$ has right-continuous sample paths $\Pr_x$-a.s., $\Pr_x(X_0=x)=1$  for every $x\in E$ and $\Fi_t\subseteq\sigma(\brc{X_t; t\in\R^N_+})$ for all $t\in\R_+^N$.

  Then, $X$ is a \emph{multiparameter Markov and Feller process} w.r.t. the filtration $(\Fi_t)_{t\in\R^N_+}$ and with the following transition operators $\EuScript{T}=\{\EuScript{T}_t; t\in\R^N_+\}$:
  \[
    \forall t\in\R^N_+,\ \forall f:\R\rightarrow\R_+\text{ measurable};\quad \EuScript{T}_t f = \Pc{\ivff{0,t}\setminus\{0\}} f,
  \]
  where $\Pii$ denotes the Markov kernel of $X$.
\end{proposition}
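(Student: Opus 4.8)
The plan is to verify one by one the five defining conditions $(i)$--$(v)$ of a multiparameter Markov process, together with the two Feller conditions, exploiting that $X$ is already $\Ci$-Feller, right-continuous and normalized. Conditions $(ii)$ and $(v)$ are among the standing hypotheses, and adaptedness $(i)$ is built into the $\Ci$-Markov property. For $(iii)$, completeness of each $\Fi_t$ is assumed throughout, while the commuting property is exactly what Theorem \ref{th:c_markov_CI} delivers: the extra hypothesis $\Fi_t\subseteq\sigma(\brc{X_t;\,t\in\R^N_+})=\Fi^0_\infty$ lets me apply that theorem with $U=\ivff{0,s}$ and $V=\ivff{0,t}$. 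For a bounded $\Fi_t$-measurable $Y$ one has $\espc[_x]{Y}{\Fi_V}=Y$, so the middle equality of (CI) reads $\espc[_x]{Y}{\Fi_s}=\espc[_x]{Y}{\Fi_{U\cap V}}=\espc[_x]{Y}{\Fi_{s\curlywedge t}}$, using $\ivff{0,s}\cap\ivff{0,t}=\ivff{0,s\curlywedge t}$; this is precisely the commuting relation.

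The heart of the argument is condition $(iv)$. First I would fix $s,t\in\R^N_+$ and introduce the increment $C=\ivff{0,s+t}\setminus\ivff{0,s}\in\Ci$, whose base $\ivff{0,s}$ is its own extremal representation, so that $\Cp{C}=\brc{\ivff{0,s}}$ and $\vXCp{C}=X_s$. Since $\ivff{0,s}\cap C=\vset$, I get $\Fi_s\subseteq\Gc{C}$, and the $\Ci$-Markov property gives $\espc[_x]{f(X_{s+t})}{\Gc{C}}=\Pc{C}f(X_s)$. Homogeneity of $\Pii$ enters here: setting $D=\ivff{0,t}\setminus\brc{0}$ (so $\abs{\Cp{D}}=1$), the shift $\theta_{\ivff{0,s}}$ maps $D$ onto $C$, because $\theta_{\ivff{0,s}}(\ivff{0,t})=\ivff{0,s+t}$ and $\theta_{\ivff{0,s}}(\brc{0})=\ivff{0,s}$; hence $\Pc{C}f=\Pc{D}f=\EuScript{T}_t f$. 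Conditioning down from $\Gc{C}$ to $\Fi_s$ via the tower property (legitimate since $\Fi_s\subseteq\Gc{C}$) and noting that $\EuScript{T}_t f(X_s)$ is $\Fi_s$-measurable then yields $\espc[_x]{f(X_{s+t})}{\Fi_s}=\EuScript{T}_t f(X_s)$, which is $(iv)$.

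For the Feller properties, the first condition is immediate from point $1$ of Definition \ref{def:c_markov_feller}: for $f\in C_0(E)$ one has $\Pc{D}f\in C_0\pthb{E^{\abs{\Cp{D}}}}=C_0(E)$ since $\abs{\Cp{D}}=1$, so $\EuScript{T}_t=\Pc{\ivff{0,t}\setminus\brc{0}}$ maps $C_0(E)$ into itself. For the second, I would apply the weak Feller estimate \eqref{eq:feller_property_weak} to the simple increment $D=\ivff{0,t}\setminus\brc{0}$: here $\norm{D}_\Ci=d_H(\ivff{0,t},\brc{0})\to 0$ as $t\to 0$, and $\ivff{0,t}\subseteq B_m$ for a fixed $m$ once $t$ is small, so \eqref{eq:feller_property_weak} forces $\normb{\Pc{D}f-f}_\infty=\normb{\EuScript{T}_t f-f}_\infty\to 0$.

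I expect the main obstacle to lie in step $(iv)$: correctly reading off $\Cp{C}$ and $\vXCp{C}$, checking $\Fi_s\subseteq\Gc{C}$, and above all invoking homogeneity to collapse the shifted kernel $\Pc{\ivff{0,s+t}\setminus\ivff{0,s}}$ onto the candidate operator $\EuScript{T}_t=\Pc{\ivff{0,t}\setminus\brc{0}}$. Once this identification is secured, the remaining conditions follow by citing Theorem \ref{th:c_markov_CI}, Definition \ref{def:c_markov_feller} and the standing hypotheses, modulo routine measurability and tower-property bookkeeping.
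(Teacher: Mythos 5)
Your proof is correct and follows essentially the same route as the paper's: verify the five defining conditions directly, obtain the commuting property from Theorem \ref{th:c_markov_CI}, derive condition \emph{(iv)} by combining the \Ci-Markov identity for $C=\ivff{0,s+t}\setminus\ivff{0,s}$ with homogeneity of $\Pii$, and read the Feller conditions off Definition \ref{def:c_markov_feller} and Equation \eqref{eq:feller_property_weak}. You simply spell out the details (the identification $\Cp{C}=\brc{\ivff{0,s}}$, the inclusion $\Fi_s\subseteq\Gc{C}$, and the shift $\theta_{\ivff{0,s}}$) that the paper leaves implicit.
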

\begin{proof}
  We have to verify the different points of the previous definition of a \emph{multiparameter Markov process}.
  \begin{enumerate}[ \it (i)]
    \item $X$ is clearly adapted to $(\Fi_t)_{t\in\R^N_+}$;
    \item $X$ is right-continuous $\Pr_x$-a.s.;
    \item $(\Fi_t)_{t\in\R^n_+}$ is a commuting filtration according to Theorem \ref{th:c_markov_CI};
    \item for all $s,t\in\R^N_+$ and for any $f\in C_0(E)$,
    \begin{align*}
      \espc[_x]{f( X_{t+s})}{\Fi_s}
      &= \Pc{\ivff{0,t+s}\setminus\ivff{0,s}}f( X_{s}) && \text{since $X$ is \Ci-Markov} \\
      &= \Pc{\ivff{0,t}\setminus\{0\}}f( X_{s}) && \text{since \Pii is homogeneous} \\
      &\eqdef \EuScript{T}_t f( X_s);
    \end{align*}
    \item for all $x\in\R$, $\pr[_x]{ X_0=x}=1$.
  \end{enumerate}
  Finally, the Feller conditions on $\EuScript{T}$ are easily verified using Equation \eqref{eq:feller_property_weak} and Definition \ref{def:c_markov_feller} of a Feller \Ci-transition system.
\end{proof}
As stated by \citet{Khoshnevisan(2002)}, the family of operators $\EuScript{T}$ characterizes the two-dimensional marginals of $X$, but it is still unknown whether it determines entirely the law of $X$. Proposition \ref{prop:c_markov_multi} partially answers this question since when $X$ is also \Ci-Markov, we know that the larger family~\Pii of transition probabilities completely characterizes the finite-dimensional distributions of $X$.

Since it is not known whether the multiparameter Markov property gives a complete picture of the law of a process $X$, it is not possible to obtain a general result which would state that multiparameter Markov processes are also \Ci-Markov. Nevertheless, we show in the next examples that usual multiparameter Markov processes investigated in the literature are also \Ci-Markov.

\begin{example}[\textbf{Additive L\'evy and product processes}]
  As observed by \citet{Khoshnevisan(2002)}, \emph{multiparameter additive L\'evy} and \emph{product processes} constitute interesting and rich collections of multiparameter Markov processes. Their generic set-indexed form has already been described in Example~\ref{ex:si_prod_add}, Section~\ref{ssec:si_examples}. Hence, we only make a few remarks specific to the multiparameter setting. 

  The general \Ci-Markov characterization of product processes (Equation~\eqref{eq:trans_prod}) stands as well for \emph{multiparameter product processes}. Hence, let us simply illustrate this class by a two-parameter process named the \emph{bi-Brownian motion} and defined as
  \[
    \forall t\in\R^2_+;\quad Y_t = B^1_{t_1} \otimes B^2_{t_2}
  \]
  where $B^1$ and $B^2$ are two independent $\R^d$-valued Brownian motion. Owing to Equation~\eqref{eq:trans_prod}, the Markov kernel of $Y$ is characterized by the following transition densities, for all $C\in\Ci$,
  \[
    p_C(\vxCp{C};(u\otimes v)) = \frac{1}{(2\pi)^d (\lambda_1(C_1) \lambda_1(C_2))^{d/2}} \exp\pthbb{-\frac{\norm{u-\Delta x^1_{B_1}}^2}{2 \lambda_1(C_1)} -\frac{\norm{v-\Delta x^2_{B_2}}^2}{2 \lambda_1(C_2)}},
  \]
  where $\lambda_1$ denotes the Lebesgue measure on $\R$ and $C_1, C_2$ are the increments defined in Equation~\eqref{eq:incr_nota}. The family of transition operators $\EuScript{T}$ can be easily retrieve from this last equality.\vsp

  The common definition of an \emph{additive L\'evy process} is usually slightly more restrictive than Equation~\eqref{eq:def_add_levy}, as it usually refers to a process $X = \brc{X_t;\,t\in\R^N_+}$ defined as follows:
  \[
    \forall t\in\R_+^N;\quad X_t = X^1_{t_1} + \dotsb + X^N_{t_N}.
  \]
  where $t=(t_1,\dotsc,t_N)$ and $X^1,\dotsc,X^N$ are $N$ independent one-parameter L\'evy processes. The form of its transition probabilities still corresponds to Equation~\eqref{eq:trans_add}, where every measure $m_i$, $1\leq i\leq N$, is nevertheless replaced by the Lebesgue measure $\lambda_1$ on $\R$.
  As previously noted, this class is far from containing all multiparameter L\'evy processes, since for instance, the Brownian sheet is not an additive process.
\end{example}

\begin{example}[\textbf{Multiparameter Ornstein--Uhlenbeck processes}]
  The \emph{set-indexed Gaussian Ornstein--Uhlenbeck} process presented in \cite{Balanca.Herbin(2012)a} has an integral representation in the multiparameter setting. More precisely,
  \[
    \forall t\in\R_+^N;\quad X_t = e^{-\scpr{\alpha}{t}}\bktbb{X_0 + \sigma\int_{\ivof{-\infty,t}\setminus\ivof{-\infty,0}} e^{\scpr{\alpha}{u}}\dt W_u },
  \]
  where $\sigma > 0$, $\alpha=(\alpha_1,\dots,\alpha_N)\in\R^N$ with $\alpha_i > 0$, $W$ is the Brownian sheet and $X_0$ is a random variable independent of $W$. As previously outlined in Example \ref{ex:si_ou}, $X$ satisfies the \Ci-Markov property.

  A different, but also natural, multiparameter extension of the Ornstein--Uhlenbeck process has been suggested in the literature (e.g. see \cite{Wang(1988), Wang(1995)} and \cite{Graversen.Pedersen(2011)}). It is defined by
  \begin{equation*}
    \forall t\in\R_+^N;\quad Y_t = e^{-\scpr{\alpha}{t}}\bktbb{Y_0 + \sigma\int_{\ivff{0,t}}e^{\scpr{\alpha}{u}}\dt W_u }.
  \end{equation*}
  As proved by \citet{Wang(1995)}, the two-parameter process $Y$ is $\ast$-Markov. A calculus similar to Proposition $2.5$ from \cite{Balanca.Herbin(2012)a} shows that it is also a multiparameter \Ci-Markov process.
  Moreover, suppose $\Pii$ and $\widetilde\Pii$ denote the \Ci-transition systems of $X$ and $Y$, respectively. Then, theses two are related by the following equality:
  \[
    \widetilde P_C(\vxCp{C};\dt x_A) = 
    \begin{cases}
      \,\delta_{\Delta x_B}(\dt x_A) & \text{for all }\ C\in\Ci\ \text{ s.t. }\ C\subseteq S;\\
      \,P_C(\vxCp{C};\dt x_A) &  \text{for all }\ C\in\Ci\ \text{ s.t. }\ C\cap S=\vset,
    \end{cases}
  \]
  where $S$ denotes the set $\brc{t\in\R^N_+ : \prod_{i=1}^N t_i = 0}$.
\end{example}

\section*{Acknowledgements}
The author would like to thank his supervisor Erick Herbin and the anonymous referee whose careful proofreadings and useful comments have helped to greatly improve this paper.



\begin{thebibliography}{49}
\normalfont\small

\bibitem[\protect\citeauthoryear{Adler}{1981}]{Adler(1981)}
\begin{bbook}[author]
\bauthor{\bsnm{Adler},~\bfnm{Robert~J.}\binits{R.~J.}}
(\byear{1981}).
\btitle{The geometry of random fields}.
\bpublisher{John Wiley \& Sons Ltd.}, \baddress{Chichester}.
\bnote{Wiley Series in Probability and Mathematical Statistics}.
\end{bbook}
\endbibitem

\bibitem[\protect\citeauthoryear{Adler and Feigin}{1984}]{Adler.Feigin(1984)}
\begin{barticle}[author]
\bauthor{\bsnm{Adler},~\bfnm{Robert~J.}\binits{R.~J.}} \AND
  \bauthor{\bsnm{Feigin},~\bfnm{Paul~D.}\binits{P.~D.}}
(\byear{1984}).
\btitle{On the cadlaguity of random measures}.
\bjournal{Ann. Probab.}
\bvolume{12}
\bpages{615--630}.
\end{barticle}
\endbibitem

\bibitem[\protect\citeauthoryear{Bakry}{1979}]{Bakry(1979)}
\begin{barticle}[author]
\bauthor{\bsnm{Bakry},~\bfnm{D.}\binits{D.}}
(\byear{1979}).
\btitle{Sur la r\'egularit\'e des trajectoires des martingales \`a deux
  indices}.
\bjournal{Z. Wahrsch. Verw. Gebiete}
\bvolume{50}
\bpages{149--157}.
\bdoi{10.1007/BF00533636}
\end{barticle}
\endbibitem

\bibitem[\protect\citeauthoryear{Balan}{2004}]{Balan(2004)}
\begin{barticle}[author]
\bauthor{\bsnm{Balan},~\bfnm{R.~M.}\binits{R.~M.}}
(\byear{2004}).
\btitle{{$Q$}-{M}arkov random probability measures and their posterior
  distributions}.
\bjournal{Stochastic Process. Appl.}
\bvolume{109}
\bpages{295--316}.
\bdoi{10.1016/j.spa.2003.09.011}
\end{barticle}
\endbibitem

\bibitem[\protect\citeauthoryear{Balan and Ivanoff}{2002}]{Balan.Ivanoff(2002)}
\begin{barticle}[author]
\bauthor{\bsnm{Balan},~\bfnm{Raluca~M.}\binits{R.~M.}} \AND
  \bauthor{\bsnm{Ivanoff},~\bfnm{Gail}\binits{G.}}
(\byear{2002}).
\btitle{A {M}arkov property for set-indexed processes}.
\bjournal{J. Theoret. Probab.}
\bvolume{15}
\bpages{553--588}.
\bdoi{10.1023/A:1016296330187}
\end{barticle}
\endbibitem

\bibitem[\protect\citeauthoryear{Balan\c{c}a and
  Herbin}{2012}]{Balanca.Herbin(2012)a}
\begin{barticle}[author]
\bauthor{\bsnm{Balan\c{c}a},~\bfnm{Paul}\binits{P.}} \AND
  \bauthor{\bsnm{Herbin},~\bfnm{Erick}\binits{E.}}
(\byear{2012}).
\btitle{A set-indexed {Ornstein-Uhlenbeck} process}.
\bjournal{Electron. Commun. Probab.}
\bvolume{17}
\bpages{no. 39, 1-14}.
\bdoi{10.1214/ECP.v17-1903}
\end{barticle}
\endbibitem

\bibitem[\protect\citeauthoryear{Bass and Pyke}{1984}]{Bass.Pyke(1984)}
\begin{barticle}[author]
\bauthor{\bsnm{Bass},~\bfnm{Richard~F.}\binits{R.~F.}} \AND
  \bauthor{\bsnm{Pyke},~\bfnm{Ronald}\binits{R.}}
(\byear{1984}).
\btitle{The existence of set-indexed {L}\'evy processes}.
\bjournal{Z. Wahrsch. Verw. Gebiete}
\bvolume{66}
\bpages{157--172}.
\bdoi{10.1007/BF00531526}
\end{barticle}
\endbibitem

\bibitem[\protect\citeauthoryear{Benjamini and
  Peres}{1994}]{Benjamini.Peres(1994)}
\begin{barticle}[author]
\bauthor{\bsnm{Benjamini},~\bfnm{Itai}\binits{I.}} \AND
  \bauthor{\bsnm{Peres},~\bfnm{Yuval}\binits{Y.}}
(\byear{1994}).
\btitle{Markov chains indexed by trees}.
\bjournal{Ann. Probab.}
\bvolume{22}
\bpages{219--243}.
\end{barticle}
\endbibitem

\bibitem[\protect\citeauthoryear{Cairoli}{1971}]{Cairoli(1971)}
\begin{barticle}[author]
\bauthor{\bsnm{Cairoli},~\bfnm{Renzo}\binits{R.}}
(\byear{1971}).
\btitle{Une classe de processus de {M}arkov}.
\bjournal{C. R. Acad. Sci. Paris S\'er. A-B}
\bvolume{273}
\bpages{A1071--A1074}.
\end{barticle}
\endbibitem

\bibitem[\protect\citeauthoryear{Cairoli and Walsh}{1975}]{Cairoli.Walsh(1975)}
\begin{barticle}[author]
\bauthor{\bsnm{Cairoli},~\bfnm{Renzo}\binits{R.}} \AND
  \bauthor{\bsnm{Walsh},~\bfnm{John~B.}\binits{J.~B.}}
(\byear{1975}).
\btitle{Stochastic integrals in the plane}.
\bjournal{Acta Math.}
\bvolume{134}
\bpages{111--183}.
\end{barticle}
\endbibitem

\bibitem[\protect\citeauthoryear{Carnal and Walsh}{1991}]{Carnal.Walsh(1991)}
\begin{bincollection}[author]
\bauthor{\bsnm{Carnal},~\bfnm{{\'E}.}\binits{{\'E}.}} \AND
  \bauthor{\bsnm{Walsh},~\bfnm{J.~B.}\binits{J.~B.}}
(\byear{1991}).
\btitle{Markov properties for certain random fields}.
In \bbooktitle{Stochastic analysis}
\bpages{91--110}.
\bpublisher{Academic Press}, \baddress{Boston, MA}.
\end{bincollection}
\endbibitem

\bibitem[\protect\citeauthoryear{Dalang and Walsh}{1992a}]{Dalang.Walsh(1992)}
\begin{barticle}[author]
\bauthor{\bsnm{Dalang},~\bfnm{Robert~C.}\binits{R.~C.}} \AND
  \bauthor{\bsnm{Walsh},~\bfnm{John~B.}\binits{J.~B.}}
(\byear{1992}a).
\btitle{The sharp {M}arkov property of the {B}rownian sheet and related
  processes}.
\bjournal{Acta Math.}
\bvolume{168}
\bpages{153--218}.
\bdoi{10.1007/BF02392978}
\end{barticle}
\endbibitem

\bibitem[\protect\citeauthoryear{Dalang and Walsh}{1992b}]{Dalang.Walsh(1992)a}
\begin{barticle}[author]
\bauthor{\bsnm{Dalang},~\bfnm{Robert~C.}\binits{R.~C.}} \AND
  \bauthor{\bsnm{Walsh},~\bfnm{John~B.}\binits{J.~B.}}
(\byear{1992}b).
\btitle{The sharp {M}arkov property of {L}\'evy sheets}.
\bjournal{Ann. Probab.}
\bvolume{20}
\bpages{591--626}.
\end{barticle}
\endbibitem

\bibitem[\protect\citeauthoryear{Dozzi}{1989}]{Dozzi(1989)}
\begin{bbook}[author]
\bauthor{\bsnm{Dozzi},~\bfnm{M.}\binits{M.}}
(\byear{1989}).
\btitle{Stochastic processes with a multidimensional parameter}.
\bseries{Pitman Research Notes in Mathematics Series}
\bvolume{194}.
\bpublisher{Longman Scientific \& Technical}, \baddress{Harlow}.
\end{bbook}
\endbibitem

\bibitem[\protect\citeauthoryear{Dozzi}{1991}]{Dozzi(1991)}
\begin{bincollection}[author]
\bauthor{\bsnm{Dozzi},~\bfnm{M.}\binits{M.}}
(\byear{1991}).
\btitle{Two-parameter stochastic processes}.
In \bbooktitle{Stochastic processes and related topics ({G}eorgenthal, 1990)}.
\bseries{Math. Res.}
\bvolume{61}
\bpages{17--43}.
\bpublisher{Akademie-Verlag}, \baddress{Berlin}.
\end{bincollection}
\endbibitem

\bibitem[\protect\citeauthoryear{Durand}{2009}]{Durand(2009)a}
\begin{barticle}[author]
\bauthor{\bsnm{Durand},~\bfnm{Arnaud}\binits{A.}}
(\byear{2009}).
\btitle{Random fractals and tree-indexed {M}arkov chains}.
\bjournal{Rev. Mat. Iberoam.}
\bvolume{25}
\bpages{1089--1126}.
\bdoi{10.4171/RMI/590}
\end{barticle}
\endbibitem

\bibitem[\protect\citeauthoryear{Evstigneev}{1982}]{Evstigneev(1982)}
\begin{barticle}[author]
\bauthor{\bsnm{Evstigneev},~\bfnm{I.~V.}\binits{I.~V.}}
(\byear{1982}).
\btitle{Extremal problems and the strong {M}arkov property of random fields}.
\bjournal{Uspekhi Mat. Nauk}
\bvolume{37}
\bpages{183--184}.
\end{barticle}
\endbibitem

\bibitem[\protect\citeauthoryear{Evstigneev}{1988}]{Evstigneev(1988)}
\begin{barticle}[author]
\bauthor{\bsnm{Evstigneev},~\bfnm{I.~V.}\binits{I.~V.}}
(\byear{1988}).
\btitle{Stochastic extremal problems and the strong {M}arkov property of random
  fields}.
\bjournal{Uspekhi Mat. Nauk}
\bvolume{43}
\bpages{3--41, 230}.
\bdoi{10.1070/RM1988v043n02ABEH001721}
\end{barticle}
\endbibitem

\bibitem[\protect\citeauthoryear{Graversen and
  Pedersen}{2011}]{Graversen.Pedersen(2011)}
\begin{barticle}[author]
\bauthor{\bsnm{Graversen},~\bfnm{Svend-Erik}\binits{S.-E.}} \AND
  \bauthor{\bsnm{Pedersen},~\bfnm{Jan}\binits{J.}}
(\byear{2011}).
\btitle{Representations of {U}rbanik's classes and multiparameter
  {O}rnstein-{U}hlenbeck processes}.
\bjournal{Electron. Commun. Probab.}
\bvolume{16}
\bpages{200--212}.
\bdoi{10.1214/ECP.v16-1621}
\end{barticle}
\endbibitem

\bibitem[\protect\citeauthoryear{Herbin and
  Merzbach}{2006}]{Herbin.Merzbach(2006)}
\begin{barticle}[author]
\bauthor{\bsnm{Herbin},~\bfnm{Erick}\binits{E.}} \AND
  \bauthor{\bsnm{Merzbach},~\bfnm{Ely}\binits{E.}}
(\byear{2006}).
\btitle{A set-indexed fractional {B}rownian motion}.
\bjournal{J. Theoret. Probab.}
\bvolume{19}
\bpages{337--364}.
\bdoi{10.1007/s10959-006-0019-0}.
\arxiv{11é}
\end{barticle}
\endbibitem

\bibitem[\protect\citeauthoryear{Herbin and
  Merzbach}{2009}]{Herbin.Merzbach(2009)}
\begin{barticle}[author]
\bauthor{\bsnm{Herbin},~\bfnm{Erick}\binits{E.}} \AND
  \bauthor{\bsnm{Merzbach},~\bfnm{Ely}\binits{E.}}
(\byear{2009}).
\btitle{Stationarity and self-similarity characterization of the set-indexed
  fractional {B}rownian motion}.
\bjournal{J. Theoret. Probab.}
\bvolume{22}
\bpages{1010--1029}.
\bdoi{10.1007/s10959-008-0180-8}
\end{barticle}
\endbibitem

\bibitem[\protect\citeauthoryear{Herbin and
  Merzbach}{2013}]{Herbin.Merzbach(2013)}
\begin{barticle}[author]
\bauthor{\bsnm{Herbin},~\bfnm{Erick}\binits{E.}} \AND
  \bauthor{\bsnm{Merzbach},~\bfnm{Ely}\binits{E.}}
(\byear{2013}).
\btitle{The set-indexed {L}\'evy process: {S}tationarity, {M}arkov and sample
  paths properties}.
\bjournal{Stochastic Process. Appl.}
\bvolume{123}
\bpages{1638--1670}.
\bdoi{10.1016/j.spa.2013.01.001}
\end{barticle}
\endbibitem

\bibitem[\protect\citeauthoryear{Herbin and
  Richard}{2012}]{Herbin.Richard(2012)}
\begin{barticle}[author]
\bauthor{\bsnm{Herbin},~\bfnm{Erick}\binits{E.}} \AND
  \bauthor{\bsnm{Richard},~\bfnm{Alexandre}\binits{A.}}
(\byear{2012}).
\btitle{Local {H}\"older regularity for set-indexed processes}.
\bjournal{Preprint}.
\bnote{\href{http://arxiv.org/abs/1203.0750}{\texttt{arXiv:1203.0750}}}.
\arxiv{1108.0873}
\end{barticle}
\endbibitem

\bibitem[\protect\citeauthoryear{Hirsch and Song}{1995}]{Hirsch.Song(1995)a}
\begin{barticle}[author]
\bauthor{\bsnm{Hirsch},~\bfnm{F.}\binits{F.}} \AND
  \bauthor{\bsnm{Song},~\bfnm{S.}\binits{S.}}
(\byear{1995}).
\btitle{Markov properties of multiparameter processes and capacities}.
\bjournal{Probab. Theory Related Fields}
\bvolume{103}
\bpages{45--71}.
\bdoi{10.1007/BF01199031}
\end{barticle}
\endbibitem

\bibitem[\protect\citeauthoryear{Imkeller}{1988}]{Imkeller(1988)}
\begin{bbook}[author]
\bauthor{\bsnm{Imkeller},~\bfnm{Peter}\binits{P.}}
(\byear{1988}).
\btitle{Two-parameter martingales and their quadratic variation}.
\bseries{Lecture Notes in Mathematics}
\bvolume{1308}.
\bpublisher{Springer-Verlag}, \baddress{Berlin}.
\end{bbook}
\endbibitem

\bibitem[\protect\citeauthoryear{Ivanoff and
  Merzbach}{2000a}]{Ivanoff.Merzbach(2000)}
\begin{bbook}[author]
\bauthor{\bsnm{Ivanoff},~\bfnm{Gail}\binits{G.}} \AND
  \bauthor{\bsnm{Merzbach},~\bfnm{Ely}\binits{E.}}
(\byear{2000}a).
\btitle{Set-indexed martingales}.
\bseries{Monographs on Statistics and Applied Probability}
\bvolume{85}.
\bpublisher{Chapman \& Hall/CRC, Boca Raton, FL}.
\end{bbook}
\endbibitem

\bibitem[\protect\citeauthoryear{Ivanoff and
  Merzbach}{2000b}]{Ivanoff.Merzbach(2000)a}
\begin{bincollection}[author]
\bauthor{\bsnm{Ivanoff},~\bfnm{Gail}\binits{G.}} \AND
  \bauthor{\bsnm{Merzbach},~\bfnm{Ely}\binits{E.}}
(\byear{2000}b).
\btitle{Set-indexed {M}arkov processes}.
In \bbooktitle{Stochastic models ({O}ttawa, {ON}, 1998)}.
\bseries{CMS Conf. Proc.}
\bvolume{26}
\bpages{217--232}.
\bpublisher{Amer. Math. Soc.}, \baddress{Providence, RI}.
\end{bincollection}
\endbibitem

\bibitem[\protect\citeauthoryear{Khoshnevisan}{2002}]{Khoshnevisan(2002)}
\begin{bbook}[author]
\bauthor{\bsnm{Khoshnevisan},~\bfnm{Davar}\binits{D.}}
(\byear{2002}).
\btitle{Multiparameter processes: An Introduction to Random Fields}.
\bseries{Springer Monographs in Mathematics}.
\bpublisher{Springer-Verlag}, \baddress{New York}.
\end{bbook}
\endbibitem

\bibitem[\protect\citeauthoryear{Kinateder}{2000}]{Kinateder(2000)}
\begin{barticle}[author]
\bauthor{\bsnm{Kinateder},~\bfnm{Kimberly K.~J.}\binits{K.~K.~J.}}
(\byear{2000}).
\btitle{Strong {M}arkov properties for {M}arkov random fields}.
\bjournal{J. Theoret. Probab.}
\bvolume{13}
\bpages{1101--1114}.
\bdoi{10.1023/A:1007822209798}
\end{barticle}
\endbibitem

\bibitem[\protect\citeauthoryear{Korezlioglu, Lefort and
  Mazziotto}{1981}]{Korezlioglu.Lefort.ea(1981)}
\begin{bincollection}[author]
\bauthor{\bsnm{Korezlioglu},~\bfnm{H.}\binits{H.}},
  \bauthor{\bsnm{Lefort},~\bfnm{P.}\binits{P.}} \AND
  \bauthor{\bsnm{Mazziotto},~\bfnm{G.}\binits{G.}}
(\byear{1981}).
\btitle{Une propri\'et\'e markovienne et diffusions associ\'ees}.
In \bbooktitle{Two-index random processes ({P}aris, 1980)}.
\bseries{Lecture Notes in Math.}
\bvolume{863}
\bpages{245--274}.
\bpublisher{Springer}, \baddress{Berlin}.
\end{bincollection}
\endbibitem

\bibitem[\protect\citeauthoryear{K{\"u}nsch}{1979}]{Kunsch(1979)}
\begin{barticle}[author]
\bauthor{\bsnm{K{\"u}nsch},~\bfnm{H.}\binits{H.}}
(\byear{1979}).
\btitle{Gaussian {M}arkov random fields}.
\bjournal{J. Fac. Sci. Univ. Tokyo Sect. IA Math.}
\bvolume{26}
\bpages{53--73}.
\end{barticle}
\endbibitem

\bibitem[\protect\citeauthoryear{L{\'e}vy}{1945}]{Levy(1945)}
\begin{barticle}[author]
\bauthor{\bsnm{L{\'e}vy},~\bfnm{Paul}\binits{P.}}
(\byear{1945}).
\btitle{Sur le mouvement brownien d\'ependant de plusieurs param\`etres}.
\bjournal{C. R. Acad. Sci. Paris}
\bvolume{220}
\bpages{420--422}.
\end{barticle}
\endbibitem

\bibitem[\protect\citeauthoryear{L{\'e}vy}{1948}]{Levy(1948)}
\begin{bbook}[author]
\bauthor{\bsnm{L{\'e}vy},~\bfnm{Paul}\binits{P.}}
(\byear{1948}).
\btitle{Processus {S}tochastiques et {M}ouvement {B}rownien. {S}uivi d'une note
  de {M}. {L}o\`eve}.
\bpublisher{Gauthier-Villars}, \baddress{Paris}.
\end{bbook}
\endbibitem

\bibitem[\protect\citeauthoryear{Luo}{1988}]{Luo(1988)}
\begin{barticle}[author]
\bauthor{\bsnm{Luo},~\bfnm{Shou~Jun}\binits{S.~J.}}
(\byear{1988}).
\btitle{Two-parameter homogeneous {M}arkovian process}.
\bjournal{Acta Math. Sci. (English Ed.)}
\bvolume{8}
\bpages{315--322}.
\end{barticle}
\endbibitem

\bibitem[\protect\citeauthoryear{Mazziotto}{1988}]{Mazziotto(1988)}
\begin{barticle}[author]
\bauthor{\bsnm{Mazziotto},~\bfnm{G.}\binits{G.}}
(\byear{1988}).
\btitle{Two-parameter {H}unt processes and a potential theory}.
\bjournal{Ann. Probab.}
\bvolume{16}
\bpages{600--619}.
\end{barticle}
\endbibitem

\bibitem[\protect\citeauthoryear{McKean}{1963}]{McKean(1963)}
\begin{barticle}[author]
\bauthor{\bsnm{McKean},~\bfnm{Henri~P.}\binits{H.~P.}}
(\byear{1963}).
\btitle{Brownian motion with a several-dimensional time}.
\bjournal{Teor. Verojatnost. i Primenen.}
\bvolume{8}
\bpages{357--378}.
\end{barticle}
\endbibitem

\bibitem[\protect\citeauthoryear{Merzbach and
  Nualart}{1990}]{Merzbach.Nualart(1990)}
\begin{barticle}[author]
\bauthor{\bsnm{Merzbach},~\bfnm{Ely}\binits{E.}} \AND
  \bauthor{\bsnm{Nualart},~\bfnm{David}\binits{D.}}
(\byear{1990}).
\btitle{Markov properties for point processes on the plane}.
\bjournal{Ann. Probab.}
\bvolume{18}
\bpages{342--358}.
\end{barticle}
\endbibitem

\bibitem[\protect\citeauthoryear{Nualart}{1983}]{Nualart(1983)}
\begin{barticle}[author]
\bauthor{\bsnm{Nualart},~\bfnm{D.}\binits{D.}}
(\byear{1983}).
\btitle{Two-parameter diffusion processes and martingales}.
\bjournal{Stochastic Process. Appl.}
\bvolume{15}
\bpages{31--57}.
\bdoi{10.1016/0304-4149(83)90020-0}
\end{barticle}
\endbibitem

\bibitem[\protect\citeauthoryear{Nualart and Sanz}{1979}]{Nualart.Sanz(1979)}
\begin{barticle}[author]
\bauthor{\bsnm{Nualart},~\bfnm{D.}\binits{D.}} \AND
  \bauthor{\bsnm{Sanz},~\bfnm{M.}\binits{M.}}
(\byear{1979}).
\btitle{A {M}arkov property for two-parameter {G}aussian processes}.
\bjournal{Stochastica}
\bvolume{3}
\bpages{1--16}.
\end{barticle}
\endbibitem

\bibitem[\protect\citeauthoryear{Pitt}{1971}]{Pitt(1971)}
\begin{barticle}[author]
\bauthor{\bsnm{Pitt},~\bfnm{Loren~D.}\binits{L.~D.}}
(\byear{1971}).
\btitle{A {M}arkov property for {G}aussian processes with a multidimensional
  parameter}.
\bjournal{Arch. Rational Mech. Anal.}
\bvolume{43}
\bpages{367--391}.
\end{barticle}
\endbibitem

\bibitem[\protect\citeauthoryear{Rozanov}{1982}]{Rozanov(1982)}
\begin{bbook}[author]
\bauthor{\bsnm{Rozanov},~\bfnm{Yu.~A.}\binits{Y.~A.}}
(\byear{1982}).
\btitle{Markov random fields}.
\bseries{Applications of Mathematics}.
\bpublisher{Springer-Verlag}, \baddress{New York}.
\bnote{Translated from the Russian by Constance M. Elson}.
\end{bbook}
\endbibitem

\bibitem[\protect\citeauthoryear{Russo}{1984}]{Russo(1984)}
\begin{bincollection}[author]
\bauthor{\bsnm{Russo},~\bfnm{Francesco}\binits{F.}}
(\byear{1984}).
\btitle{\'{E}tude de la propri\'et\'e de {M}arkov \'etroite en relation avec
  les processus planaires \`a accroissements ind\'ependants}.
In \bbooktitle{Seminar on probability, {XVIII}}.
\bseries{Lecture Notes in Math.}
\bvolume{1059}
\bpages{353--378}.
\bpublisher{Springer}, \baddress{Berlin}.
\end{bincollection}
\endbibitem

\bibitem[\protect\citeauthoryear{Samorodnitsky and
  Taqqu}{1994}]{Samorodnitsky.Taqqu(1994)}
\begin{bbook}[author]
\bauthor{\bsnm{Samorodnitsky},~\bfnm{Gennady}\binits{G.}} \AND
  \bauthor{\bsnm{Taqqu},~\bfnm{Murad~S.}\binits{M.~S.}}
(\byear{1994}).
\btitle{Stable non-{G}aussian random processes}.
\bseries{Stochastic Modeling}.
\bpublisher{Chapman \& Hall}, \baddress{New York}.
\end{bbook}
\endbibitem

\bibitem[\protect\citeauthoryear{Walsh}{1986}]{Walsh(1986)}
\begin{bincollection}[author]
\bauthor{\bsnm{Walsh},~\bfnm{John~B.}\binits{J.~B.}}
(\byear{1986}).
\btitle{Martingales with a multidimensional parameter and stochastic integrals
  in the plane}.
In \bbooktitle{Lectures in probability and statistics ({S}antiago de {C}hile,
  1986)}.
\bseries{Lecture Notes in Math.}
\bvolume{1215}
\bpages{329--491}.
\bpublisher{Springer}, \baddress{Berlin}.
\bdoi{10.1007/BFb0075875}
\end{bincollection}
\endbibitem

\bibitem[\protect\citeauthoryear{Wang}{1988}]{Wang(1988)}
\begin{barticle}[author]
\bauthor{\bsnm{Wang},~\bfnm{Zikun}\binits{Z.}}
(\byear{1988}).
\btitle{Transition probabilities and prediction for two-parameter
  {O}rnstein-{U}hlenbeck processes}.
\bjournal{Kexue Tongbao (English Ed.)}
\bvolume{33}
\bpages{5--9}.
\end{barticle}
\endbibitem

\bibitem[\protect\citeauthoryear{Wang}{1995}]{Wang(1995)}
\begin{bincollection}[author]
\bauthor{\bsnm{Wang},~\bfnm{Zikun}\binits{Z.}}
(\byear{1995}).
\btitle{Multi-parameter {O}rnstein-{U}hlenbeck process}.
In \bbooktitle{Dirichlet forms and stochastic processes ({B}eijing, 1993)}
\bpages{375--382}.
\bpublisher{De Gruyter}, \baddress{Berlin}.
\end{bincollection}
\endbibitem

\bibitem[\protect\citeauthoryear{Wong}{1989}]{Wong(1989)}
\begin{bincollection}[author]
\bauthor{\bsnm{Wong},~\bfnm{Eugene}\binits{E.}}
(\byear{1989}).
\btitle{Multiparameter martingale and {M}arkov process}.
In \bbooktitle{Stochastic differential systems ({B}ad {H}onnef, 1988)}.
\bseries{Lecture Notes in Control and Inform. Sci.}
\bvolume{126}
\bpages{329--336}.
\bpublisher{Springer}, \baddress{Berlin}.
\bdoi{10.1007/BFb0043795}
\end{bincollection}
\endbibitem

\bibitem[\protect\citeauthoryear{Zhang}{1985}]{Zhang(1985)}
\begin{barticle}[author]
\bauthor{\bsnm{Zhang},~\bfnm{Run~Chu}\binits{R.~C.}}
(\byear{1985}).
\btitle{Markov properties of the generalized {B}rownian sheet and extended
  {${\rm OUP}_2$}}.
\bjournal{Sci. Sinica Ser. A}
\bvolume{28}
\bpages{814--825}.
\end{barticle}
\endbibitem

\bibitem[\protect\citeauthoryear{Zhou and Zhou}{1993}]{Zhou.Zhou(1993)}
\begin{barticle}[author]
\bauthor{\bsnm{Zhou},~\bfnm{Xiao~Wen}\binits{X.~W.}} \AND
  \bauthor{\bsnm{Zhou},~\bfnm{Jian~Wei}\binits{J.~W.}}
(\byear{1993}).
\btitle{Sample function properties of two-parameter {M}arkov processes}.
\bjournal{Stochastic Process. Appl.}
\bvolume{47}
\bpages{37--51}.
\bdoi{10.1016/0304-4149(93)90093-J}
\end{barticle}
\endbibitem

\end{thebibliography}
\end{document}